\newcommand{\kk}{k}
\newcommand{\tors}{\operatorname{tors}}
\newcommand{\supp}{\operatorname{supp}}
\newcommand{\suppo}{\supp^{\circ}}
\newcommand{\image}{\operatorname{Im}}
\newcommand{\torf}{\operatorname{torf}}
\newcommand{\op}{\operatorname{op}}
\newcommand{\calE}{\mathcal{E}}
\newcommand{\calF}{\mathcal{F}}
\newcommand{\calG}{\mathcal{G}}
\newcommand{\calI}{\mathcal{I}}
\newcommand{\calS}{\mathcal{S}}
\newcommand{\calT}{\mathcal{T}}
\newcommand{\calY}{\mathcal{Y}}
\newcommand{\onto}{\twoheadrightarrow}
\newcommand{\add}{\operatorname{add}}
\newcommand{\filt}{\mathscr{F}ilt}
\newcommand{\Hom}{\operatorname{Hom}}
\newcommand{\End}{\operatorname{End}}
\newcommand{\ME}{\operatorname{ME}}
\newcommand{\ind}{\operatorname{ind}}
\newcommand{\cov}{\operatorname{cov}}
\newcommand{\covers}{{\,\,\,\cdot\!\!\!\! >\,\,}}
\newcommand{\covered}{{\,\,<\!\!\!\!\cdot\,\,\,}}
\newcommand{\covdown}{\cov_{\downarrow}}
\newcommand{\covup}{\cov^{\uparrow}}
\newcommand{\inv}{\operatorname{inv}}
\newcommand{\join}{\vee}
\renewcommand{\Join}{\bigvee}
\newcommand{\Meet}{\bigwedge}
\newcommand{\refines}{{\,\underline{\ll}\,}}
\newcommand{\tran}{\operatorname{tran}}
\newcommand{\Gen}{\operatorname{Gen}}
\newcommand{\ftors}{\operatorname{f-tors}}
\renewcommand{\1}{\hat{1}}
\newcommand{\newword}[1]{\textbf{\emph{#1}}}
\newcommand{\module}{\operatorname{mod}}
\newtheorem{proposition}{Proposition}[subsection]
\newtheorem{theorem}[proposition]{Theorem}
\newtheorem{corollary}[proposition]{Corollary}
\newtheorem{lemma}[proposition]{Lemma}
\newtheorem{example}[proposition]{Example}
\newtheorem{definition}[proposition]{Definition}
\newtheorem{remark}[proposition]{Remark}
\numberwithin{equation}{section}
\newcommand{\margincolor}{red}
\newcounter{margincounter}
\newcommand{\marginnum}{\textcolor{\margincolor}{\begin{picture}(0,0)\put(5,3){\circle{13}}\end{picture}\arabic{margincounter}}}
\newcommand{\margin}[1]
{\marginnum\marginpar{\textcolor{\margincolor}{\arabic{margincounter}.}\,\,\tiny
    #1}\addtocounter{margincounter}{1}}
\newcounter{amargincounter}
\newcounter{smargincounter}
 \renewcommand{\margin}[1]{}
\begin{document}
\title{Minimal inclusions of torsion classes}
\author{Emily Barnard, Andrew T. Carroll, and Shijie Zhu}
\maketitle

\begin{abstract}
Let $\Lambda$ be a finite-dimensional associative algebra.
The torsion classes of $\module\Lambda$ form a lattice under containment, denoted by $\tors \Lambda$.
In this paper, we characterize the cover relations in $\tors \Lambda$ by certain indecomposable modules.
We consider three applications:
First, we show that the completely join-irreducible torsion classes
(torsion classes which cover precisely one element) are in bijection
with bricks.
Second, we characterize faces of the canonical join complex of
$\tors \Lambda$ in terms of representation theory.
Finally, we show that, in general, the algebra $\Lambda$ is not characterized by its lattice $\tors \Lambda$.
In particular, we study the torsion theory of a quotient of the preprojective algebra of type $A_n$.
We show that its torsion class lattice is isomorphic to the weak order on~$A_n$.
\end{abstract}

\setcounter{tocdepth}{2}
\tableofcontents

\section{Introduction}
Let $\Lambda$ be a
finite-dimensional associative algebra over a field~$k$, and write
$\tors \Lambda$ for the set of torsion classes of finitely generated
modules over $\Lambda$, partially ordered by containment.
The poset $\tors \Lambda$ is a complete lattice in which the \newword{meet} (or greatest lower bound) $\Meet \{\calT, \calT'\}$ coincides with the intersection $\calT \cap \calT'$, and the \newword{join} (or smallest upper bound) $\Join \{\calT, \calT'\}$ coincides with the iterative extension closure of the union $\calT\cup \calT'$.
In this paper, we study the cover relations $\calT'\covers \calT$ in $\tors \Lambda$.
Recall that a torsion class $\calT'$ \newword{covers} $\calT$ if $\calT\subsetneq \calT'$ and for each $\calY\in \tors\Lambda$, if $\calT\subsetneq \calY \subseteq \calT'$ then $\calY= \calT'$.

In \cite{AIR}, the authors describe the lattice of functorially-finite torsion classes by way of $\tau$-tilting pairs.
They show the existence of a unique module which encodes each cover relation as follows:
When $\calT$ and $\calT'$ are functorially finite torsion classes, with $\calT'\covers \calT$,
there exists a unique module $M$ with the property that $\calT'$ is the closure of $\add(\calT \cup\{M\})$ under taking quotients.

In our complimentary approach, we show that for each cover relation $\calT'\covers \calT$ in $\tors\Lambda$, there exists a unique ``minimal'' module $M$ with the property that $\calT'$ is the closure of $\calT\cup \{M\}$ under taking \textit{iterative extensions}.
Below, we make the notation of ``minimal'' precise. 

\begin{definition}\label{def:min-ext-module}
A module $M$ is a \newword{minimal extending module for $\calT$} if it satisfies the following three properties:
\begin{enumerate}[label={(P\arabic*)}]
 \item Every proper factor of \label{propertyone}
   $M$ is in $\calT$;
  \item If $0\rightarrow M\rightarrow X
    \rightarrow T\rightarrow 0$ is a non-split exact sequence with
    $T\in \calT$, then $X\in \calT$; \label{propertytwo}
  \item $\Hom(\calT, M)=0$. \label{propertythree}
  \end{enumerate}
\end{definition}

The following two remarks will be useful. 
First, Property \ref{propertyone} implies that any minimal extending
module is indecomposable. Indeed, direct summands are proper factors, and
torsion classes are closed under direct sums. 
Second, assuming Property \ref{propertyone}, Property
\ref{propertythree} is equivalent to the fact that $M\notin \calT$. In
particular, if there is a non-trivial homomorphism from a module in
$\calT$ to $M$, then both the image and cokernel are in $\calT$, and
$M$ is an extension of the cokernel by the image.

In the statement of the following theorem, and throughout the paper we have the following notation:
We write $[M]$ for the isoclass of the module $M$; $\ME(\calT$) for the set  of isoclasses~$[M]$ such that $M$ is a minimal extending module for $\calT$; and $\filt(\calT \cup \{M\})$ for the iterative extension closure of $\calT \cup \{M\}$.

\begin{theorem}\label{main: covers}
Suppose $\calT$ be a torsion class over $\Lambda$.
Then the map $$\eta_\calT: [M]\mapsto \filt(\calT\cup \{M\})$$ is a bijection from the set $\ME(\calT)$ to the set of $\calT' \in \tors \Lambda$ such that  $\calT \covered \calT'$.
\end{theorem}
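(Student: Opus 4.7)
The plan is to establish three things: (i) for $M \in \ME(\calT)$, the class $\calT^{\ast} := \filt(\calT \cup \{M\})$ is a torsion class covering $\calT$; (ii) $\eta_\calT$ is injective; (iii) every cover of $\calT$ arises as $\filt(\calT \cup \{M\})$ for some $M \in \ME(\calT)$. Parts (i) and (ii) share a common ``top-of-filtration'' argument, while (iii) is the main obstacle and relies on the torsion pair structure of $\module \Lambda$.

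For (i), closure of $\calT^{\ast}$ under extensions is built in, and closure under quotients follows from \ref{propertyone}: a quotient of a filtered module inherits a filtration whose subquotients are quotients of the original pieces, and a quotient of $M$ is either $M$ or a proper factor, the latter sitting in $\calT$ by \ref{propertyone}. The strict inclusion $\calT \subsetneq \calT^{\ast}$ is the content of $M \notin \calT$. For the cover property, let $\calY$ be a torsion class with $\calT \subsetneq \calY \subseteq \calT^{\ast}$; it suffices to show $M \in \calY$. Pick $Y \in \calY \setminus \calT$ with a filtration $0 = Y_0 \subset \cdots \subset Y_n = Y$, and choose $k$ maximal with $Y/Y_k \notin \calT$. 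The subquotient $Y_{k+1}/Y_k$ must equal $M$ (else $Y/Y_k$ would be an extension of two modules in $\calT$), producing a short exact sequence $0 \to M \to Y/Y_k \to Y/Y_{k+1} \to 0$ with $Y/Y_{k+1} \in \calT$. Property \ref{propertytwo} rules out the non-split case (which would place $Y/Y_k$ in $\calT$, contradicting the choice of $k$), so the sequence splits and $M$ is a summand---hence a quotient---of $Y/Y_k$, itself a quotient of $Y \in \calY$. Therefore $M \in \calY$.

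For (ii), if $\filt(\calT \cup \{M\}) = \filt(\calT \cup \{M'\})$ with $M, M' \in \ME(\calT)$, apply the same analysis to $Y = M$ inside $\filt(\calT \cup \{M'\})$ to obtain a short exact sequence $0 \to M' \to M/M_k \to M/M_{k+1} \to 0$ with $M/M_{k+1} \in \calT$. Property \ref{propertytwo} for $M'$ rules out the non-split case, and in the split case \ref{propertyone} for $M$ forces $M_k = 0$ (a non-zero $M_k$ would make $M/M_k$ a proper factor of $M$, so in $\calT$, contradicting $M/M_k \notin \calT$). Hence $M \cong M' \oplus (M/M_{k+1})$, and indecomposability of $M$ together with $M' \neq 0$ force $M = M'$.

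For (iii), the main obstacle: given $\calT' \covers \calT$, let $M$ be a module of minimal dimension in $\calT' \setminus \calT$. Property \ref{propertyone} is immediate from minimality, and \ref{propertythree} follows from \ref{propertyone} via the second remark. The delicate step is \ref{propertytwo}: assume for contradiction a non-split extension $0 \to M \to X \to T \to 0$ with $T \in \calT$ and $X \notin \calT$, and let $tX$ denote the torsion submodule of $X$ relative to the torsion pair $(\calT, \calF)$. The image $(M+tX)/tX \cong M/(M \cap tX)$ is a submodule of $X/tX \in \calF$; if it were a non-zero proper factor of $M$, \ref{propertyone} would place it in $\calT \cap \calF = 0$, impossible. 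So either $M \subseteq tX$---forcing $X/tX \cong T/(tX/M) \in \calT \cap \calF = 0$ and thus $X = tX \in \calT$, contradiction---or $M \cap tX = 0$. In the latter, $T/tX = X/(M+tX)$ is non-zero (otherwise $X = M \oplus tX$ and the original SES splits), and the induced non-split extension $0 \to M \to X/tX \to T/tX \to 0$ is a strictly smaller counterexample when $tX \neq 0$, permitting induction on $\dim X$. For the base case $tX = 0$ (so $X \in \calF$), first observe that $M$ is a brick: any non-iso, non-zero endomorphism of $M$ would have image a proper factor of $M$ (hence in $\calT$ by \ref{propertyone}) lying inside $M \in \calF$, which is impossible. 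Then any non-zero map $g : X \to M$ restricts non-trivially to $M$ (otherwise it factors through $T$, contradicting \ref{propertythree}) and inverting this restriction in $\End(M)$ yields a retraction, splitting the SES, a contradiction. So $\Hom(X, M) = 0$, and $\calU := \{N : \Hom(N, M) = 0\}$ is a torsion class containing $\calT$ and $X$ but not $M$; hence $\calT \subseteq \calU \cap \calT' \subsetneq \calT'$, and the cover relation forces $\calU \cap \calT' = \calT$, giving $X \in \calT$, the final contradiction. With \ref{propertyone}--\ref{propertythree} verified, part (i) makes $\filt(\calT \cup \{M\})$ a torsion class with $\calT \subsetneq \filt(\calT \cup \{M\}) \subseteq \calT'$, so equal to $\calT'$ by the cover.
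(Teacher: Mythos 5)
Your proof is correct, and it takes a genuinely different route for the hardest part, surjectivity. The paper first proves an intermediate characterization (its Proposition~\ref{prop: epis and covers}: $\filt(\calT\cup\{M\})\covers\calT$ iff every module of $\calT'\setminus\calT$ surjects onto $M$); it then constructs the desired $M$ for a given cover $\calT'\covers\calT$ by picking a maximal submodule $N'\subseteq N$ with $N/N'\notin\calT$ and setting $M=N/N'$, verifying \ref{propertytwo} via that surjectivity criterion and the brick property. You instead pick $M$ of minimal dimension in $\calT'\setminus\calT$ (from which \ref{propertyone} and \ref{propertythree} are immediate), then verify \ref{propertytwo} by a descent on $\dim X$ through the torsion-free quotient $X/tX$, reducing to the case $X\in\calF$, where you finish with an auxiliary torsion class $\calU={}^\perp M$ squeezed against the cover relation. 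The endgame ingredients overlap (the brick property of $M$ and the retraction argument), but your scaffolding is different: you avoid proving the epimorphism criterion separately and work directly with the torsion pair $(\calT,\calF)$ and the lattice structure. What the paper's route buys is a reusable and cleaner intermediate proposition (which the paper indeed reuses several times); what your route buys is a self-contained verification of \ref{propertytwo} and the explicit ``minimal dimension'' description of the labeling module, at the cost of a more intricate inductive argument. For parts (i) and (ii), your ``top-of-filtration'' trick (choosing $k$ maximal with $Y/Y_k\notin\calT$) is essentially the same mechanism as the paper's induction on filtration length, just packaged slightly more directly.

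One place worth tightening for exposition: when you pass to the induced sequence $0\to M\to X/tX\to T'\to 0$ and call it non-split, the justification should be made explicit: if it split then $T'$ would be a direct summand of $X/tX\in\calF$, hence $T'\in\calT\cap\calF=0$, so $X=M\oplus tX$ and the original sequence splits. You state the $T'\neq 0$ fact but not this one-line bridge to non-splitness. Otherwise the argument is complete.
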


Recall that a module $M$ is called a \newword{brick} if the
endomorphism ring of $M$ is a division ring. (That is, the non-trivial
endomorphisms are invertible.)

\begin{theorem}\label{main: cor schur}
Let $\Lambda$ be a finite-dimensional associative algebra and $M\in\module\Lambda$.
Then $M$ is a minimal extending module for some torsion class  if and
only if $M$ is a brick.
\end{theorem}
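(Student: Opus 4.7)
The plan is to prove both implications by exploiting the brick condition---that every nonzero endomorphism of $M$ is invertible---together with finite dimensionality of the modules.

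For the forward direction, assume $M$ is a minimal extending module for some torsion class $\calT$, and let $f \colon M \to M$ be a nonzero endomorphism. If $f$ is not surjective, then $\image(f)$ is a proper factor of $M$: indeed, $M/\ker(f) \cong \image(f)$, and $\ker(f) \neq 0$ by rank considerations in finite dimensions. By \ref{propertyone}, $\image(f) \in \calT$, so the inclusion $\image(f) \hookrightarrow M$ is a nonzero morphism from an object of $\calT$ to $M$, contradicting \ref{propertythree}. Thus $f$ is surjective, hence an isomorphism by finite dimensionality. So $\End(M)$ is a division ring and $M$ is a brick.

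For the reverse direction, given a brick $M$, the natural candidate torsion class is
$$\calT_M := \{X \in \module\Lambda : \Hom(X, M) = 0\}.$$
Since $\Hom(-, M)$ is left exact and contravariant, $\calT_M$ is closed under quotients and extensions, and is therefore a torsion class. Property \ref{propertythree} is immediate. For \ref{propertyone}, suppose $N \subseteq M$ is nonzero and let $g \colon M/N \to M$. Composing with the projection $\pi \colon M \to M/N$ yields $g \circ \pi \in \End(M)$. If $g \neq 0$, then $g \circ \pi \neq 0$ (as $\pi$ is surjective), so by the brick property $g \circ \pi$ is an isomorphism, which would force $\pi$ to be injective, contradicting $N \neq 0$. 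Hence $g = 0$ and $M/N \in \calT_M$.

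The main step is verifying \ref{propertytwo}, and I expect this to be the main obstacle. Let $0 \to M \xrightarrow{\iota} X \xrightarrow{p} T \to 0$ be a non-split exact sequence with $T \in \calT_M$, and suppose for contradiction that there is a nonzero map $h \colon X \to M$. Consider $h \circ \iota \in \End(M)$. If $h \circ \iota = 0$, then $h$ factors through $p$, giving a nonzero map $T \to M$, contradicting $T \in \calT_M$. Otherwise $h \circ \iota$ is an isomorphism by the brick property, which provides a left inverse to $\iota$, splitting the sequence---also a contradiction. Therefore $\Hom(X, M) = 0$, so $X \in \calT_M$, as required.
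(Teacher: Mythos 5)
Your proof is correct, and the reverse direction takes a genuinely different route from the paper's.

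For the forward direction, both you and the paper exploit that $\image f$ and $M/\image f$ are proper factors of $M$ (hence in $\calT$ by \ref{propertyone}) when $f$ is a non-invertible endomorphism. The paper closes the argument using extension closure together with $M \notin \calT$, while you instead appeal directly to \ref{propertythree} via the inclusion $\image f \hookrightarrow M$; these are essentially equivalent, since assuming \ref{propertyone}, \ref{propertythree} is equivalent to $M \notin \calT$.

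For the reverse direction, the paper works on the dual side: it shows that $M$ is a minimal co-extending module for the torsion-free class $\filt(\Gen M)^\perp$, which requires a separate lemma identifying $\filt(\Gen M)^\perp$ with $\{X : \Hom_\Lambda(M,X)=0\}$, and then invokes the torsion/torsion-free translation (Proposition~\ref{prop:torsion-free-to-torsion-relation}). You instead stay entirely on the torsion side, writing down the explicit torsion class $\calT_M = \{X : \Hom_\Lambda(X,M)=0\}$ and verifying \ref{propertyone}--\ref{propertythree} directly. This is cleaner and more self-contained: it needs neither the dual definitions nor the translation proposition, and it makes \ref{propertythree} trivially true. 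One conceptual point worth noting is that $\calT_M$ is the \emph{largest} torsion class admitting $M$ as a minimal extending module (any such $\calT$ must satisfy $\calT \subseteq \calT_M$ by \ref{propertythree}), whereas the paper's implicit torsion class is the one covered by $\filt(\Gen M)$; a brick can be a minimal extending module for several torsion classes, and the theorem only requires exhibiting one. Both approaches are valid; yours gets there more economically.
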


As an immediate consequence of Theorem~\ref{main: cor schur} we
obtain a labeling of cover relations for $\tors
\Lambda$ (when they exist) by bricks. We illustrate in
Example~\ref{kronecker} that in $\tors
\Lambda$ there exists pairs $\calT'> \calT$ such that there is no
torsion class~$\calY$ satisfying $\calT' \covers \calY \ge \calT$.

This paper fits into a larger body of research which studies the combinatorial structure of the lattice of (functorially finite) torsion classes.
Connections between the combinatorics of a finite simply laced Weyl group $W$ and the corresponding preprojective algebra $\Pi W$ are of particular interest.
In \cite{Miz}, Mizuno showed the lattice of functorially finite torsion classes $\ftors \Pi W$ is isomorphic to the weak order on $W$.
Building on this work, the authors of \cite{IRRT} have shown that the lattice of (functorially finite) torsion classes of quotients of $\Pi W$ are \textit{lattice quotients} of the weak order on~$W$.
They also obtain an analogous labeling of $\ftors \Pi W$ by certain modules called \textit{layer modules}.
(See \cite[Theorem~1.3]{IRRT}.)
In a related direction, the authors of \cite{G-M} study certain lattice properties of $\ftors \Lambda$ when $\Lambda$ arises from a quiver that is mutation-equivalent to a path quiver or oriented cycle, including a certain minimal ``factorization'' called the \textit{canonical join representation}.

Inspired by these results, we will give three applications of Theorem~\ref{main: covers} and~\ref{main: cor schur}.
Before describing them, we recall some terminology in lattice theory.
In a (not necessarily finite) lattice $L$, a \newword{join representation} for an element $w\in L$ is an expression $\Join A = w$, where $A$ is a (possibly infinite) subset of $L$.
An element $w$ is \newword{join-irreducible}, if $w\in A$ for any join representation $w=\Join A$, where $A$ is a finite set.
An element $w$ is \newword{completely join-irreducible}, if $w\in A$ for any join representation $w=\Join A$, where $A$ is any subset of $L$.
Equivalently, $w$ is completely join-irreducible if and only if $w$ covers only one element $v$ and any element $u<w\in L$ satisfies $u\leq v$.
\begin{example}\label{natural numbers}
\normalfont
Let $L$ be the lattice of $\mathbb{N}\cup \{\infty\}$ with natural partial order. Then $\infty$ is join-irreducible but not completely join-irreducible.
\end{example}

For our first application, we have the following theorem; see \cite[Lemma~8.2]{G-M} and \cite[Theorems~1.1 and 1.2]{IRRT} for analogous results.
(In the statement below $\Gen(M)$ is the closure of $M$ under taking factors and direct sums.)

\begin{theorem}\label{schur modules to join-irreducibles}
Suppose that $M$ is an indecomposable $\Lambda$ module.
Then the map $\zeta:[M]\mapsto \filt(\Gen(M))$ is a bijection from the
set of isoclasses of bricks over $\Lambda$ to the set of of completely join-irreducible elements in $(\tors \Lambda,  \subseteq)$.
\end{theorem}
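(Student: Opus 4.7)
The plan is to build $\zeta$ and its inverse using the brick labeling of cover relations from Theorems~\ref{main: covers} and~\ref{main: cor schur}. The inverse will send a completely join-irreducible $\calT'$ to the unique brick labeling its unique downward cover $\calT\covered\calT'$.

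The inverse direction is nearly immediate. Given completely join-irreducible $\calT'$ with unique downward cover $\calT$, Theorems~\ref{main: covers} and~\ref{main: cor schur} provide a unique brick $M$ with $\calT' = \filt(\calT \cup \{M\})$. Since $M \in \calT'$, a torsion class, one has $\filt(\Gen(M)) \subseteq \calT'$; were this strict, complete join-irreducibility would force $\filt(\Gen(M)) \subseteq \calT$, contradicting $M \notin \calT$ (which holds by~(P3) and the remark following Definition~\ref{def:min-ext-module}). So $\zeta([M]) = \calT'$, establishing surjectivity. Injectivity will follow once $\zeta$ is well-defined, since two bricks $M, M'$ with $\calT'_M = \calT'_{M'}$ must label the same cover below this torsion class, hence agree by the uniqueness portion of Theorem~\ref{main: covers}.

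To show $\zeta$ is well-defined, I verify that $\calT'_M := \filt(\Gen(M))$ is a completely join-irreducible torsion class for every brick $M$. It is a torsion class because $\Gen(M)$ is closed under quotients and this property is preserved under iterative extensions. To locate a cover below, set
\[
\calT_M := \{X \in \calT'_M : \Hom(X, M) = 0\};
\]
the long exact sequence of $\Hom(-, M)$ shows $\calT_M$ is a torsion class. I next check that $M$ is a minimal extending module for $\calT_M$. Brickness is essential: for~(P1), any map $M/K \to M$ composed with the quotient $M \to M/K$ yields an endomorphism of $M$, which must be zero or an iso, and an iso cannot kill $K \ne 0$; for~(P2), applying $\Hom(-, M)$ to the non-split extension $0 \to M \to X \to T \to 0$ exploits that $\id_M$ is sent to a non-zero class in $\Ext^1(T, M)$. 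Theorem~\ref{main: covers} then gives $\calT_M \covered \filt(\calT_M \cup \{M\}) = \calT'_M$, the last equality by mutual inclusion: $\calT'_M$ is a torsion class containing both $\calT_M$ and $M$, while any torsion class containing $M$ contains $\filt(\Gen(M)) = \calT'_M$.

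The main obstacle is showing $\calT_M$ is the \emph{unique} element covered by $\calT'_M$, equivalently that every torsion class $\calY \subsetneq \calT'_M$ lies in $\calT_M$. Given such $\calY$, one has $M \notin \calY$, and the cover relation forces $\calY \vee \calT_M \in \{\calT_M, \calT'_M\}$, with the first case giving the conclusion. To exclude $\calY \vee \calT_M = \calT'_M$, I assume it holds and examine a filtration of $M$ with factors in $\calY \cup \calT_M$: property~(P1) places the top factor into $\calT_M$, and the indecomposability of the brick $M$ combined with~(P2) lets us analyze the rectangle $(\calY \cap \calT_M,\, \calY,\, \calT_M,\, \calT'_M)$ of cover relations in $\tors \Lambda$. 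A compatibility of brick labels on opposite edges of this rectangle, driven by the uniqueness portion of Theorem~\ref{main: covers}, forces the label of $\calY \cap \calT_M \covered \calY$ to also be $M$, giving $M \in \calY$ and the desired contradiction.
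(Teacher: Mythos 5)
Your construction of the candidate lower cover $\calT_M = \{X \in \filt(\Gen M) : \Hom(X, M) = 0\}$ and your verification that $M$ is a minimal extending module for $\calT_M$ (including the clean $\Ext^1$ argument for~\ref{propertytwo}) are correct, and in fact a more direct route than the paper's, which reaches the same torsion class by dualizing through torsion-free classes via Propositions~\ref{if part} and~\ref{prop:torsion-free-to-torsion-relation}. Your inverse-direction and injectivity arguments are also fine.

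There is, however, a genuine gap in the final step, where you must show $\calT_M$ is the \emph{unique} lower cover, equivalently that every torsion class $\calY \subsetneq \filt(\Gen M)$ is contained in $\calT_M$. After reducing to excluding $\calY \vee \calT_M = \calT'_M$, you appeal to a ``rectangle of cover relations'' $(\calY \cap \calT_M,\, \calY,\, \calT_M,\, \calT'_M)$ with compatible brick labels on opposite edges. But $\tors\Lambda$ is not modular, so there is no diamond isomorphism to transport the cover $\calT_M \covered \calT'_M$ down to $\calY \cap \calT_M \covered \calY$: that interval can be long, or $\calY$ may cover nothing at all (see Remark~\ref{kronecker}). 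The ``compatibility of labels on opposite edges'' you rely on is a feature of congruence-uniform lattices and is neither established in the paper nor true for $\tors\Lambda$ in general. As written, this step cannot be completed.

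The fix is to invoke Proposition~\ref{prop: epis and covers}, which your proof never uses: since $\calT_M \covered \calT'_M$ and $M$ is a minimal extending module for $\calT_M$, \emph{every} $N \in \calT'_M \setminus \calT_M$ has $M$ as a quotient. So if $\calY \subsetneq \calT'_M$ with $\calY \not\subseteq \calT_M$, pick $N \in \calY \setminus \calT_M \subseteq \calT'_M \setminus \calT_M$; then $M$ is a factor of $N$, hence $M \in \calY$ (torsion classes are closed under factors), hence $\filt(\Gen M) \subseteq \calY$, contradicting $\calY \subsetneq \calT'_M$. This one-liner replaces your rectangle argument entirely and is exactly what the paper does.
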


For our second application, we consider the canonical join representation in $\tors \Lambda$.
A join representation $\Join A$ is \newword{irredundant} if $\Join A' <\Join A$ for each subset $A'\subseteq A$.
Informally, the \newword{canonical join representation of $w$} is the unique lowest irredundant join representation $\Join A$ for $w$, when such an expression exists.
In this case, we also say that the set $A$ is a canonical join representation.
(We make the notion ``lowest'' precise in Section~\ref{canonical join complex}.)
In particular, each element $a\in A$ is join-irreducible.
\begin{example}\label{Tamari example}
\normalfont
Consider the top element $\1$ in the pentagon lattice $N_5$ shown on the left in Figure~\ref{pentagon}.
Its irredundant join representations are $\Join \1$, $\Join \{x,z\}$ and $\Join \{x,y\}$.
Observe that $\Join \{x,y\}$ is ``lower'' than $\Join\{x,z\}$ because $x\le z$ (and $y\le y$).
Indeed, $\Join \{x,y\}$ is the canonical join representation of $\1$.
\end{example}

It is natural to ask which collections $A$ of join-irreducible elements in~$L$ satisfy $\Join A$ is a canonical join representation.
The collection of such subsets, which we denote by $\Gamma(L)$, has the structure of an abstract simplicial complex whose vertex set is the set of join-irreducible elements in $L$.
(See, for instance, \cite[Proposition~2.2]{arcs}, for the case when $L$ is finite.)
We call $\Gamma(L)$ the \newword{canonical join complex} of $L$.
\begin{figure}[h]
  \centering
   \scalebox{.8}{ \includegraphics{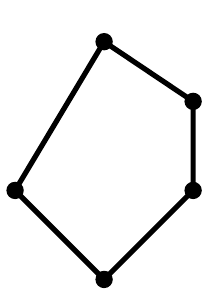}
   \begin{picture}(0,50)(0,50)
   \put(-2,82){$y$}
   \put(-2,106){$z$}
   \put(-68.5,82){$x$}
   \put(-36, 127){$\1$}
   \end{picture}
   \qquad \qquad
      \scalebox{.8}{ \includegraphics{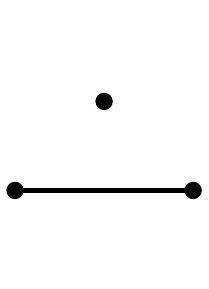}
   \begin{picture}(0,50)(0,50)
   \put(-9.5,65){$y$}
   \put(-27,95){$z$}
   \put(-60.5,65){$x$}
   \end{picture}}}
     \caption{A lattice $N_5$ and its canonical join complex $\Gamma(N_5)$.}
        \label{pentagon}
\end{figure}

\begin{example}\label{ex: canonical join complex}
\normalfont
Consider the pentagon lattice $N_5$ shown on the left in Figure~\ref{pentagon}.
The canonical join complex $\Gamma(N_5)$ appears on the right.
The set $\{x,z\}\not\in\Gamma(N_5)$ because $\Join \{x,z\} = \1$ is \textit{not} the canonical join representation for $\1$, and $\{y,z\}$ is not a face in $\Gamma(N_5)$ because $\Join \{y,z\}$ is not even irredundant.
\end{example}

Our second application characterizes the collections of subsets $A$ of \textit{completely} join-irreducible elements that belong to $\Gamma(\tors \Lambda)$.
For the second statement of the theorem below, recall that $\tors \Lambda$ is finite if and only if $\Lambda$ is $\tau$-rigid finite (and in this case $\tors \Lambda$ is equal to the lattice of functorially finite torsion classes $\ftors \Lambda$).
See \cite{IRTT} and Remark~\ref{finite case}.
\begin{theorem}\label{main: cjr}
Suppose that $\calE$ is a collection of bricks over $\Lambda$.
Then the set $\{\filt(\Gen M):\, M\in \calE\}$ is a face of $\Gamma(\tors \Lambda)$ if and only if each pair $M$ and $M'$ satisfies the compatibility condition known as \newword{hom-orthogonality}:
\[\dim \Hom_{\Lambda}(M, M') = \dim \Hom_{\Lambda}(M', M) =0\]
In particular, if $\Lambda$ is $\tau$-rigid finite then $\Gamma(\tors \Lambda)$ is isomorphic to the complex of hom-orthogonal $\Lambda$ brick modules.
\end{theorem}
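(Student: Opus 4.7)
The plan is to reduce the general statement to the pairwise case $|\calE| = 2$ and then invoke the flag property of the canonical join complex of $\tors \Lambda$. The bridge between the two simplicial complexes in the statement is Theorem~\ref{schur modules to join-irreducibles}, which identifies isoclasses of bricks with completely join-irreducible torsion classes via $M \mapsto \calT_M := \filt(\Gen M)$. Combined with Theorem~\ref{main: covers}, each $\calT_M$ covers a unique torsion class $\calT_M^*$, and $M$ is the minimal extending module of $\calT_M^*$; in particular, every proper factor of $M$ lies in $\calT_M^*$ by property \ref{propertyone}.

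The central technical ingredient is the following filtration lemma: if $N$ is a nonzero module in $\filt(\Gen N')$, then $\Hom(N', N) \neq 0$. Indeed, for any filtration $0 = M_0 \subsetneq M_1 \subsetneq \cdots \subsetneq M_n = N$ with successive quotients in $\Gen N'$, the nonzero submodule $M_1 \in \Gen N'$ admits a nonzero morphism from $N'$, which composes with the inclusion $M_1 \hookrightarrow N$ to produce the desired map. Because $\calT_M \subseteq \calT_{M'}$ if and only if $M \in \calT_{M'}$ (as $\calT_{M'}$ is closed under $\Gen$ and $\filt$), the lemma yields pairwise irredundancy: hom-orthogonal bricks $M, M'$ give rise to incomparable classes $\calT_M$ and $\calT_{M'}$. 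For the converse pairwise direction, given $M \neq M'$ and a nonzero $\phi\colon M \to M'$, the exact sequence $0 \to \image\phi \to M' \to M'/\image\phi \to 0$ places $\image\phi \in \calT_M^*$ or $M'/\image\phi \in \calT_{M'}^*$ by property \ref{propertyone}, and in either subcase one deduces the identity $\calT_M \vee \calT_{M'} = \calT_M \vee \calT_{M'}^*$, delivering a strictly lower irredundant decomposition that contradicts canonicality.

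To upgrade from pairs to arbitrary $\calE$, I would establish that $\tors \Lambda$ is join-semidistributive. This can be extracted from the brick labeling of cover relations supplied by Theorem~\ref{main: cor schur}: any putative failure $\calT \vee \calY = \calT \vee \calZ$ with $\calT \vee (\calY \wedge \calZ)$ strictly smaller forces an inconsistency in the brick labels on the covers above $\calT \vee (\calY \wedge \calZ)$. Once semidistributivity is in hand, a classical lattice-theoretic argument shows the canonical join complex is a flag simplicial complex, so a finite set of completely join-irreducibles is a face as soon as every pair is. Combined with the pairwise statement, this proves the first conclusion of the theorem for all $\calE$.

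The main obstacle I foresee is the minimality (``lowest'') step in the pairwise case: ruling out \emph{all} lower irredundant decompositions of $\calT_M \vee \calT_{M'}$ under hom-orthogonality, not merely the obvious ones coming from a nonzero morphism. Any alternative with some $\calT_N \subsetneq \calT_M$ forces $N \in \calT_M^*$ by Theorem~\ref{schur modules to join-irreducibles}, hence $\calT_N \vee \calT_{M'} \subseteq \calT_M^* \vee \calT_{M'}$; one must then verify strictness $\calT_M^* \vee \calT_{M'} \subsetneq \calT_M \vee \calT_{M'}$ using the minimal-extending description of the cover $\calT_M^* \covered \calT_M$ together with $\Hom(M', M) = 0$. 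The final sentence of the theorem is then immediate: when $\Lambda$ is $\tau$-rigid finite, $\tors\Lambda$ is finite (Remark~\ref{finite case}), every face is finite, and the bijection $M \leftrightarrow \calT_M$ transports $\Gamma(\tors \Lambda)$ onto the hom-orthogonality complex of bricks.
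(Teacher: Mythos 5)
Your plan to reduce to the pairwise case via the flag property of $\Gamma(\tors\Lambda)$ is a genuinely different strategy from the paper's, and it has several gaps that prevent it from closing as written.

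The paper makes no appeal to semidistributivity or flagness. Its reverse direction hinges on Lemma~\ref{lem:join-has-lower-faces}: when $\calE$ is hom-orthogonal, one shows \emph{directly} that each $M\in\calE$ is a minimal co-extending module for $\calT^\perp$ (where $\calT=\Join\{\filt(\Gen M)\}$), and hence, by Proposition~\ref{prop:torsion-free-to-torsion-relation}, a minimal extending module for some $\calS\covered\calT$ that moreover contains all of $\calE\setminus\{M\}$. This one construction simultaneously gives irredundancy (since $\Join\{\filt(\Gen M'):M'\in\calE\setminus\{M\}\}\le\calS<\calT$) and the ``lowest'' property (any competing irredundant join representation $A$ must contain some $\calG\not\subseteq\calS$, and then Proposition~\ref{prop: epis and covers} forces $\filt(\Gen M)\subseteq\calG$). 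No reduction to pairs is needed, and infinite $\calE$ is handled uniformly.

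By contrast, your route has three issues. First, join-semidistributivity of $\tors\Lambda$ is itself a substantial theorem; your sketch (``a putative failure forces an inconsistency in the brick labels'') is not an argument, and the paper nowhere establishes or uses this fact. Second, even granting semidistributivity, the deduction that $\Gamma(L)$ is flag is the content of the cited paper [flag] and is stated there for \emph{finite} semidistributive lattices; $\tors\Lambda$ is typically infinite, the theorem explicitly allows infinite $\calE$ (see Remark~\ref{kronecker}), and you would need a complete-lattice version of flagness that is not off-the-shelf. Third, your sketch of the minimality step in the pairwise case is not the right comparison: you propose to show $\calT_{M}^*\vee\calT_{M'}\subsetneq\calT_M\vee\calT_{M'}$ after replacing one joinand by something smaller, but an alternative irredundant join representation of $\calT$ need not have this two-element shape, so this does not establish that $\{\calT_M,\calT_{M'}\}$ refines every competitor. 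The paper's Lemma~\ref{lem:join-has-lower-faces} supplies the correct object to compare against --- a specific cocover $\calS$ of $\calT$ containing $\calT_{M'}$, which is in general strictly larger than $\calT_M^*\vee\calT_{M'}$ --- and that is the missing idea. Your forward direction (nonzero $\phi\colon M\to M'$ gives a strictly lower irredundant join representation via $\calT_{M'}^*$) and your filtration lemma are sound and close in spirit to the paper's Proposition~\ref{cjr only if} and Proposition~\ref{if part}, respectively.
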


Finally, we consider the question:
Is the algebra $\Lambda$ characterized by its lattice of torsion classes?
For our third application, we give an extended counter-example.
We study the torsion theory of a certain quotient of the preprojective algebra in type $A_n$, which we call $RA_n$.
Unlike the preprojective algebra, $RA_n$ has finite representation type for each $n$.
We describe the canonical join complex of $RA_n$, and construct an explicit isomorphism from $\tors RA_n$ to the weak order on $A_n$. 
Thus, by Mizuno's result, $RA_n$ shares the same torsion theory as $\Pi A_n$.


\section{Minimal inclusions among torsion classes}
\label{sec:minim-incl-among}
In this section, we prove Theorems~\ref{main: covers} and~\ref{main: cor schur}.
In Proposition~\ref{thm:pseudo-minimal-covers}, we verify that the map $\eta_{\calT}$ from Theorem~\ref{main: covers} is well-defined.
That is, we argue that $\filt(\calT \cup \{M\})$ is indeed a torsion class, and that it covers $\calT$.
In Theorem~\ref{thm: eta inverse} we construct an inverse map of $\eta_\calT$.
The proof of the forward direction of Theorem~\ref{main: cor schur}
can be found in Lemma~\ref{lem: torsion and schur} while the remaining direction appears in the proof of Proposition~\ref{if part}.

\subsection{Preliminaries}\label{sec: background torsion classes}
Throughout, we take $\Lambda$ to be a finite-dimensional associative
algebra over a field $k$, and we write $\module\Lambda$ for the category of finite-dimensional (left) modules over $\Lambda$.
For $\calT$ a class of over $\Lambda$ (which we assume to be closed
under isomorphism), we write $\ind \calT$ for the set of indecomposable modules $M\in \calT$ and $[\ind\, \calT]$ for the set of isoclasses $[M]$ such that $M\in \ind \calT$.

A \newword{torsion class} $\calT$ is a class of modules that is closed under factors, isomorphisms, and extensions.
Dually, a class of modules $\calF$ is a \newword{torsion-free class} if it is closed under submodules, isomorphisms, and extensions.
As in the introduction, $\tors \Lambda$ denotes the lattice of torsion classes over $\Lambda$, in which $\calT\le \calT'$ if and only if~$\calT\subseteq \calT'$.
We write $\torf \Lambda$ for the lattice of torsion-free classes also ordered by containment.

At times it will be useful to translate a result or a proof from the language of torsion classes to the language of torsion-free classes.
To do this, we make use of the following standard dualities.
Given a torsion class $\calT$, denote by $\calT^\perp$ the set of modules $X\in
\module \Lambda$ such that $\Hom_\Lambda(\calT, X) = 0$.
The map $(-)^\perp: \tors \Lambda \rightarrow \torf \Lambda$ is a poset
anti-isomorphim with inverse given by $$\calF \mapsto ^\perp
\calF:=\{X\in \module \Lambda:\, \Hom_\Lambda(X, \calF)=0\}.$$
The duality functor $D=\Hom_k(-,k): \module \Lambda \rightarrow \module \Lambda^{\op}$ also furnishes a poset anti-isomorphism from $\tors
\Lambda$ to $\torf \Lambda^{\op}$ with inverse given by the same duality.
(Recall that $\Lambda^{\op}$ denotes the opposite algebra of
$\Lambda$. For details, see \cite{IRTT} or \cite{S}.)

Let $\calS$ be a set of indecomposable modules in $\module\Lambda$.
An \newword{$\calS$-filtration (of length $l$)} of a module $M$ in $\module\Lambda$ is a sequence of
submodules \[M=M_l \supsetneq M_{l-1} \supsetneq \dotsc \supsetneq M_1
\supsetneq M_0=0\] such that $M_i/M_{i-1}$ is isomorphic to a module in $\calS$ for each $i=1,\dotsc, l$.
We write $\filt^{(l)}(\calS)$ for the class of modules $M$ that admit an $\calS$-filtration of length at most~$l$ and $\filt(\calS)$ for the class of all modules in $\module \Lambda$ that admit an $\calS$-filtration, i.e.,
$\filt(\calS)=\bigcup_{l\geq 0} \filt^{(l)}(\calS)$.
When $\calS$ is not a class of indecomposable modules (but rather an additive full subcategory, for instance) we abuse the notation and write $\filt(\calS)$ instead of $\filt(\ind S)$.
We close this subsection with two lemmas.

\begin{lemma}\label{lem-ext-closure}
  Let $\calS$ be a class of indecomposable modules in $\module\Lambda$. The class $\filt(\calS)$ is closed under extensions.
\end{lemma}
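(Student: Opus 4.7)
The plan is to show directly that given any short exact sequence
\[
0 \longrightarrow A \longrightarrow B \longrightarrow C \longrightarrow 0
\]
with $A, C \in \filt(\calS)$, we can build an $\calS$-filtration of $B$ by concatenating a filtration of $A$ with a pullback of a filtration of $C$. No induction on length is needed; the construction is explicit.

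First I would fix $\calS$-filtrations
\[
0 = A_0 \subsetneq A_1 \subsetneq \cdots \subsetneq A_m = A
\qquad\text{and}\qquad
0 = C_0 \subsetneq C_1 \subsetneq \cdots \subsetneq C_n = C,
\]
so that each $A_i/A_{i-1}$ and each $C_j/C_{j-1}$ is isomorphic to a module in $\calS$. Identifying $A$ with its image in $B$ and writing $\pi\colon B \twoheadrightarrow C$ for the surjection, define $B_j := \pi^{-1}(C_j)$ for $0 \le j \le n$. Then $B_0 = A$, $B_n = B$, and the chain
\[
0 = A_0 \subsetneq A_1 \subsetneq \cdots \subsetneq A_m = B_0 \subsetneq B_1 \subsetneq \cdots \subsetneq B_n = B
\]
is a candidate filtration of $B$.

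Next I would verify that each successive quotient lies in $\calS$. The quotients among the $A_i$ are given by the chosen filtration of $A$, so they are already in $\calS$. For the remaining steps, I would use the standard fact that $\pi$ restricts to a surjection $B_j \twoheadrightarrow C_j$ with kernel $A$, and then apply the third isomorphism theorem (or a short diagram chase) to conclude that $B_j/B_{j-1} \cong C_j/C_{j-1}$, which lies in $\calS$ by hypothesis. Strict inclusion $B_{j-1} \subsetneq B_j$ also follows, since the inclusion $C_{j-1} \subsetneq C_j$ is strict and $\pi$ is surjective.

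There is no real obstacle here; the only thing to be careful about is the bookkeeping at the junction $A_m = B_0$, where one must check that inserting this as a single step does not create a repeated term in the chain (it does not, because $B_0 = A = A_m$), and that the overall sequence really is strictly increasing. Once these small checks are made, $B$ admits an $\calS$-filtration of length $m + n$, and hence $B \in \filt(\calS)$, which is the desired closure under extensions.
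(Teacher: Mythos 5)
Your proof is correct and takes essentially the same approach as the paper: the paper uses the pullback of $N_i' \hookrightarrow N'$ along $\pi\colon M\to N'$, which is exactly your preimage $B_j = \pi^{-1}(C_j)$, and both arguments concatenate the filtration of the sub with the lifted filtration of the quotient. The only cosmetic difference is that you spell out the third-isomorphism-theorem identification $B_j/B_{j-1}\cong C_j/C_{j-1}$, which the paper leaves implicit.
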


\begin{proof} Suppose there is an exact sequence
 \[
 0\rightarrow N\rightarrow M\stackrel{\pi}\rightarrow N^\prime\rightarrow 0
 \]
 with $N\in \filt^{(l)}(\calS)$ and $N^\prime\in\filt^{(l^\prime)}(\calS)$.

 Let
 $N=N_l \supsetneq N_{l-1} \supsetneq \dotsc
 \supsetneq N_0 = 0$ and $N^\prime=N^\prime_{l^\prime} \supsetneq N^\prime_{l^\prime-1} \supsetneq \dotsc
 \supsetneq N^\prime_0 = 0$ be an $\calS$-filtration of $N$ and
 $N^\prime$ respectively. Take $M_{i}$ to be the pull back following, for $0\leq
 i\leq l^\prime$:

\begin{tikzpicture}[scale=0.75]
  \node (A) at (0,0) {$M_i$}; 
  \node(B) at (2,0) {$N_i'$};
  \node (C) at (0,-2) {$M$}; 
  \node (D) at (2,-2) {$N'$};
  \draw[->] (A.east)--(B.west);
  \draw[->] (A.south)--(C.north);
  \draw[->] (C.east)--(D.west);
  \draw[right hook->] (B.south)--(D.north);
\end{tikzpicture}\\
Then $M=M_{l^\prime} \supsetneq M_{l^\prime-1}\supsetneq \dotsc M_0=N=N_l \supsetneq N_{l-1} \supsetneq \dotsc
 \supsetneq N_0 = 0$ is an $\calS$-filtration of $M$.
\end{proof}

\begin{lemma}\label{lem-epi-closure}
Suppose that $\calS$ is a class of indecomposable modules that is
closed under taking indecomposable summands of factors.
Then $\filt(\calS)$ is closed under factors.
Dually, if $\calS$ is closed under taking indecomposable summands of submodules, then $\filt(\calS)$ is closed under submodules.
\end{lemma}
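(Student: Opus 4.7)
The plan is to prove the first statement by induction on filtration length and then indicate the dual. Suppose $N$ is a factor of some $M\in\filt(\calS)$, say via a surjection $\pi\colon M\onto N$, and fix an $\calS$-filtration
\[
M=M_l\supsetneq M_{l-1}\supsetneq\cdots\supsetneq M_0=0.
\]
Define $N_i:=\pi(M_i)$ so that $N=N_l\supseteq N_{l-1}\supseteq\cdots\supseteq N_0=0$. The key observation is that the canonical map $M_i/M_{i-1}\onto N_i/N_{i-1}$ exhibits $N_i/N_{i-1}$ as a factor of the module $M_i/M_{i-1}\in\calS$. By Krull--Schmidt, write $N_i/N_{i-1}\cong X_{i,1}\oplus\cdots\oplus X_{i,r_i}$ as a sum of indecomposables; each $X_{i,j}$ is an indecomposable summand of a factor of a module in $\calS$, hence lies in $\calS$ by hypothesis.

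The next step is to refine the chain. Between $N_{i-1}$ and $N_i$, insert the preimages under the quotient $N_i\onto N_i/N_{i-1}$ of the partial sums $X_{i,1},\,X_{i,1}\oplus X_{i,2},\,\ldots$; the successive quotients of this refined chain are exactly the $X_{i,j}\in\calS$. Finally, delete any repeated terms (which occur precisely when some $N_i=N_{i-1}$, equivalently when $r_i=0$) so that the remaining chain is strictly decreasing. This produces an $\calS$-filtration of $N$, so $N\in\filt(\calS)$.

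For the dual statement, assume $\calS$ is closed under indecomposable summands of submodules, let $L\subseteq M\in\filt(\calS)$, and set $L_i:=L\cap M_i$. The inclusion-induced map
\[
L_i/L_{i-1}=(L\cap M_i)/(L\cap M_{i-1})\hookrightarrow M_i/M_{i-1}
\]
realizes $L_i/L_{i-1}$ as a submodule of a module in $\calS$. Decomposing each such quotient into indecomposables (each of which lies in $\calS$ by hypothesis) and refining the chain $L_l\supseteq L_{l-1}\supseteq\cdots\supseteq L_0=0$ exactly as above yields an $\calS$-filtration of $L$.

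The argument is elementary once one has the two bookkeeping moves (refining via a Krull--Schmidt decomposition of each filtration quotient, and removing trivial steps), so there is no real obstacle; the main point to be careful about is simply that the hypothesis is phrased in terms of \emph{indecomposable summands} of factors (resp.\ submodules), which is exactly what is needed to pass from the possibly-decomposable quotient $N_i/N_{i-1}$ (resp.\ $L_i/L_{i-1}$) back into the class $\calS$ of indecomposables.
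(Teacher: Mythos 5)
Your proof is correct, and it takes a genuinely different route from the paper's. The paper argues by induction on filtration length $l$: reducing to $U$ indecomposable, writing $U$ as an extension of $U/\phi(N_{l-1})$ (a factor of the top $\calS$-subquotient) by $\phi(N_{l-1})$ (a factor of an object in $\filt^{(l-1)}(\calS)$, handled by induction), and then invoking the extension-closure Lemma~\ref{lem-ext-closure}. You instead push the entire filtration forward along the surjection, observe that each subquotient $N_i/N_{i-1}$ is a factor of $M_i/M_{i-1}\in\calS$, and directly refine the image chain using Krull--Schmidt decompositions of these subquotients. Your argument is more constructive and self-contained: it does not call on the extension-closure lemma at all, and it handles head-on the fact that $N_i/N_{i-1}$ may be decomposable or zero, a point the paper's write-up passes over lightly (the paper asserts $U/\phi(N_{l-1})$ ``belongs to $\calS$'' even though it need not be indecomposable, which strictly requires the same Krull--Schmidt decomposition step you make explicit). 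The paper's proof, by contrast, is shorter once Lemma~\ref{lem-ext-closure} is in hand and mirrors that lemma's inductive structure. One small implicit dependence worth flagging in your version: the Krull--Schmidt property is used, which holds here because $\module\Lambda$ for $\Lambda$ finite-dimensional is a Krull--Schmidt category; you may wish to say so explicitly.
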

\begin{proof}
  Suppose that $N\in \filt^{(l)}(\calS)$ and $\phi: N\rightarrow U$ is an epimorphism.
We assume, without loss of generality, that $U$ is indecomposable and proceed by induction on $l$.
If $l=1$, then $N\in \calS$.
Since $\calS$ is closed under epimorphisms, we have $U\in \calS$.
Now suppose that all factors of modules in $\filt^{(l^\prime)}(\calS)$
are in $\filt(\calS)$ for $l'<l$.
Since $N\in \filt^{(l)}(\calS)$, it admits an $\calS$-filtration $N=N_l\supsetneq N_{l-1} \supsetneq \dotsc
\supsetneq N_0=0$ with $N_i/N_{i-1}$ isomorphic to an object in
$\calS$. Consider the submodule $\phi(N_{l-1})$ of $U$, and notice that $\phi(N_{l-1})$ is a factor of $N_{l-1}\in \filt^{(l-1)}(\calS)$.
Therefore, by induction, $\phi(N_{l-1})$ has an $\calS$-filtration.
Further, $U/\phi(N_{l-1})$ is a factor of $N_{l}/N_{l-1}\in\calS$.
Therefore $U/\phi(N_{l-1})$ belongs to $\calS$.
We have the following short exact sequence:
 \[
 0\rightarrow \phi(N_{l-1})\rightarrow U\stackrel{\pi}\rightarrow U/\phi(N_{l-1})\rightarrow 0.
 \]
Lemma~\ref{lem-ext-closure} then implies that $U$ is in $\filt(\calS)$, as desired.


\end{proof}

\subsection{Minimal extending modules}\label{proof sec}
In this subsection, we prove Theorem~\ref{main: covers}.
We begin by showing that $\eta_\calT: [M]\mapsto \filt(\calT \cup \{M\})$ is well-defined.
The first statement in the next lemma confirms that $\filt(\calT \cup \{M\})$ is indeed a torsion class.
Because it follows immediately, we also dispense with the forward direction of Theorem~\ref{main: cor schur}.
\begin{lemma}\label{lem: torsion and schur}
Suppose that $\calT\in \tors \Lambda$ and $M$ is an indecomposable $\Lambda$ module that does not belong to $\calT$.
If every proper factor of $M$ lies in $\calT$ then:
\begin{enumerate}
\item $\filt(\calT \cup \{M\})$ is a torsion class and
\item $M$ is a brick over $\Lambda$.
\end{enumerate}
\end{lemma}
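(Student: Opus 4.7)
The plan is to handle the two claims separately. For (1) I would apply the preceding lemmas \ref{lem-ext-closure} and \ref{lem-epi-closure}. Since $\filt(\calT \cup \{M\})$ is by definition closed under isomorphism and extensions (Lemma \ref{lem-ext-closure}), the only work is to show closure under factors. Lemma \ref{lem-epi-closure} reduces this to the following check: every indecomposable summand of a factor of something in $\ind(\calT \cup \{M\})$ again lies in $\calT \cup \{M\}$. There are only two cases. If the starting indecomposable is in $\calT$, then its factors (and hence their indecomposable summands) stay in $\calT$ because $\calT$ is a torsion class. If it is $M$ itself, then the only factors are $M$ (which is indecomposable and equals itself) and proper factors of $M$; the latter are all in $\calT$ by hypothesis, so their indecomposable summands are in $\calT$ as well.

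For (2) I would invoke Fitting's lemma: since $M$ is an indecomposable finite-dimensional $\Lambda$-module, every endomorphism $f \colon M \to M$ is either invertible or nilpotent. Suppose for contradiction that some nonzero $f$ is not invertible, hence nilpotent. Then $\ker f \neq 0$ and $\image f \neq M$, so both $\image f \cong M/\ker f$ and $M/\image f$ are \emph{proper} factors of $M$ and therefore both lie in $\calT$ by assumption. The short exact sequence
\[
0 \longrightarrow \image f \longrightarrow M \longrightarrow M/\image f \longrightarrow 0
\]
then exhibits $M$ as an extension of two elements of $\calT$, forcing $M \in \calT$ since $\calT$ is closed under extensions. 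This contradicts the hypothesis $M \notin \calT$. Consequently every nonzero endomorphism of $M$ is invertible, which is exactly the statement that $M$ is a brick.

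I do not anticipate a serious obstacle here; the main subtlety is simply making sure that the two cases in the factor-closure argument for (1) are genuinely exhaustive, which is why I would explicitly invoke Lemma \ref{lem-epi-closure} rather than try to track filtrations by hand. The brick argument in (2) hinges on the fact that for an indecomposable finite-dimensional module, "non-invertible" and "nilpotent" coincide for endomorphisms, after which the extension-closure of $\calT$ finishes the proof immediately.
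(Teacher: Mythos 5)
Your proof is correct and takes essentially the same route as the paper: part (1) reduces to Lemma~\ref{lem-epi-closure}, and part (2) uses the same short exact sequence $0\to\image f\to M\to M/\image f\to 0$ to force $M\in\calT$ by extension-closure. Your invocation of Fitting's lemma is harmless but slightly more than needed; all that is used is that a non-invertible endomorphism of a finite-dimensional module has nonzero kernel, so that $\image f$ and $M/\image f$ are both \emph{proper} factors.
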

\begin{proof}
The first statement follows immediately from Lemma~\ref{lem-epi-closure}.
To prove the second statement, let $f:M\rightarrow M$ be a non-zero morphism, and assume that $f$ is not an isomorphism.
Consider the exact sequence:
$$
0\rightarrow \image f\rightarrow M\rightarrow M/\image f\rightarrow 0.
$$
Observe that both $\image f$ and $M/\image f$ are proper factors of $M$, and thus belong to $\calT$.
Because $\calT$ is closed under extensions, it follows that $M\in\calT$, and that is a contradiction.
Therefore  $\End(M)$ only contains automorphisms
and hence  $\End(M)$ is a division ring. So $M$ is a brick.
\end{proof}

\begin{example}\label{noncover}
\normalfont
In light of Lemma~\ref{lem: torsion and schur}, we might be tempted to think that condition~\ref{propertyone} is sufficient for constructing cover relations in $\tors \Lambda$.
For a nonexample, consider the torsion class $\calT = \{3, \substack{3\\2}\}$ in $\tors kQ$ for the quiver $Q= 3\longrightarrow 2 \longrightarrow 1$.
(We write $i$ for the simple at $i$, and $\substack{3\\2}$ for the representation $k\overset{1}{\longrightarrow}k \longrightarrow 0$.)
The simple module at vertex $1$ trivially satisfies the condition that all of its proper factors belong to $\calT$.
We have following chain of torsion classes in $\tors \Lambda$:
$$\calT = \add\{3, \substack{3\\2}\} \subsetneq \add\{3, \substack{3\\2}, \substack{3\\2\\1} \}\subsetneq \add\{3, \substack{3\\2}, \substack{3\\2\\1}, 1 \} = \filt(\calT\cup \{1\}).$$
\end{example}

Below (Proposition~\ref{prop: epis and covers}) we establish precisely when $\filt(\calT\cup \{M\})$ covers $\calT$.

\begin{lemma}\label{uniqueness}
Let $\calT\in \tors\Lambda$ and $M\not\in \calT$ be an indecomposable module such that each proper factor of $M$ belongs to $\calT$.
Let $N\in \filt(\calT \cup \{M\})\setminus \calT$ such that each proper factor  $N$ lies in $\calT$.
If $\filt(\calT \cup \{M\})\covers \calT$ then $N\cong M$.
 
\end{lemma}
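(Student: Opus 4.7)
The plan is to exploit the symmetry between $M$ and $N$ that the covering hypothesis makes available: I will show that $M$ and $N$ embed in one another, and conclude by finite-dimensionality. First, both $M$ and $N$ lie in the torsion-free class $\calT^\perp$: this is immediate from the second remark after Definition~\ref{def:min-ext-module}, which says that under \ref{propertyone}, property \ref{propertythree} is equivalent to the module not lying in $\calT$. Since both $M$ and $N$ satisfy \ref{propertyone} and are not in $\calT$, we get $\Hom(\calT, M) = \Hom(\calT, N) = 0$.

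Next, I would use the covering hypothesis to put $M$ and $N$ on equal footing. Since $N$ satisfies \ref{propertyone} and $N \notin \calT$, Lemma~\ref{lem: torsion and schur}(1) says that $\filt(\calT \cup \{N\})$ is a torsion class. It strictly contains $\calT$ (since $N \notin \calT$) and is contained in $\filt(\calT \cup \{M\})$ (since the latter is extension-closed and contains $N$). The covering hypothesis then forces $\filt(\calT \cup \{N\}) = \filt(\calT \cup \{M\})$, so $M \in \filt(\calT \cup \{N\})$. Thus $N$ admits a filtration with factors in $\calT \cup \{M\}$ (given), and symmetrically $M$ admits a filtration with factors in $\calT \cup \{N\}$.

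The key observation is the following: if $Y \in \calT^\perp$ admits an $\calS$-filtration with $\calS \subseteq \calT \cup \{X\}$ for some indecomposable $X \in \calT^\perp \setminus \calT$, then $X$ embeds in $Y$. Indeed, given a filtration $Y = Y_l \supsetneq \dotsb \supsetneq Y_0 = 0$, the smallest nonzero term $Y_1$ is a submodule of $Y \in \calT^\perp$; because $\calT^\perp$ is a torsion-free class and hence closed under submodules, $Y_1 \in \calT^\perp$. On the other hand $Y_1$ is isomorphic to a module in $\calT \cup \{X\}$, and since $\calT \cap \calT^\perp = 0$ we must have $Y_1 \cong X$.

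Applying this observation to $N$ (with $X = M$) and to $M$ (with $X = N$) yields injections $M \hookrightarrow N$ and $N \hookrightarrow M$. Since $M$ is finite-dimensional, the two injections force $\dim M = \dim N$, so either one is in fact an isomorphism of vector spaces and hence of modules; thus $M \cong N$. The subtle point of the argument is recognizing that the covering hypothesis is exactly what legitimizes the symmetric treatment of $M$ and $N$ in the second step; the remaining work is a short manipulation of filtrations inside the torsion-free class $\calT^\perp$.
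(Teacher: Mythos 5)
Your proof is correct, and the core of it — using the covering hypothesis to show that $\filt(\calT\cup\{M\}) = \filt(\calT\cup\{N\})$, so each module admits a filtration featuring the other — is exactly the paper's argument. The final step differs slightly: the paper simply observes that $N$'s filtration has at least one subfactor isomorphic to $M$ (since $N\notin\calT$), giving $\dim N \geq \dim M$, and concludes by the symmetric inequality. You instead note that both $M$ and $N$ lie in $\calT^{\perp}$, so the \emph{bottom} term of any $\ind(\calT)\cup\{M\}$-filtration of $N$ must itself be isomorphic to $M$ (being a submodule of $N\in\calT^{\perp}$ and hence in $\calT^{\perp}$, while $\calT\cap\calT^{\perp}=0$), yielding actual mutual embeddings $M\hookrightarrow N$ and $N\hookrightarrow M$. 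Your version extracts a bit more structure at the cost of an extra observation about $\calT^{\perp}$; the paper's dimension count is slightly more economical. Both are valid and the difference is cosmetic rather than strategic.
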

\begin{proof}
Write $\calT'$ for $\filt(\calT \cup \{M\})$.
Note that $N$ also satisfies the assumptions of Lemma \ref{lem:
  torsion and schur}, so $\filt(\calT \cup \{N\})$ is a torsion class
which properly contains $\calT$ (since $N\notin \calT$) and is
contained in $\calT'$.
Hence, $\calT'=\filt(\calT \cup \{N\})$. 
In particular, $N$ admits a $\calT \cup \{M\}$-filtration with at
least one subfactor isomorphic to $M$, so $\dim N \geq \dim M$. 
Symmetrically, $M$ admits a $\calT \cup \{N\}$-filtration with at
least one subfactor isomorphic to $N$.
Therefore, $M\cong N$. 

\end{proof}

\begin{proposition}\label{prop: epis and covers}
Let $\calT\in \tors\Lambda$ and $M\not\in \calT$ be an indecomposable module such that each proper factor of $M$ belongs to $\calT$.
Then $\calT' =\filt(\calT \cup \{M\}) \covers \calT$ if and only if $M$ is a factor of each $N\in \calT'\setminus \calT$.
\end{proposition}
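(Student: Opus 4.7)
The plan is to prove the two implications separately: the backward direction will follow quickly from closure of torsion classes under quotients and extensions, while the forward direction will proceed via a minimality argument that reduces to Lemma~\ref{uniqueness}.

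For the backward direction, I would assume each $N \in \calT' \setminus \calT$ admits $M$ as a factor. Since $M$ itself is such an $N$, the inclusion $\calT \subsetneq \calT'$ is strict. For any torsion class $\calY$ with $\calT \subsetneq \calY \subseteq \calT'$, I would pick some $N \in \calY \setminus \calT$; by hypothesis $M$ is a factor of $N$, so $M \in \calY$ because torsion classes are closed under quotients. Then $\calY \supseteq \calT \cup \{M\}$, and closure under extensions gives $\calY \supseteq \filt(\calT \cup \{M\}) = \calT'$. Hence $\calY = \calT'$, establishing the cover.

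For the forward direction, assume $\calT' \covers \calT$ and fix $N \in \calT' \setminus \calT$. The idea is to pass from $N$ to a canonical ``minimal bad quotient'' and apply the uniqueness lemma. Concretely, I would choose a quotient $N'$ of $N$ of minimal $k$-dimension among quotients of $N$ that do not lie in $\calT$; such an $N'$ exists because $N$ itself is a candidate, and the relevant dimensions form a nonempty set of non-negative integers. By the minimality of $\dim N'$, every proper factor of $N'$ (which is again a quotient of $N$, since the composition of two surjections is a surjection) has strictly smaller dimension and must therefore belong to $\calT$. Moreover $N' \in \calT'$, because $\calT'$ is closed under quotients, and $N' \notin \calT$ by construction. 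Thus $N'$ fulfills precisely the hypotheses placed on the module ``$N$'' in Lemma~\ref{uniqueness}, so that lemma forces $N' \cong M$. In particular $M$ is a factor of $N$, as required.

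The substantive work has already been absorbed into Lemma~\ref{uniqueness}, so the only new ingredient here is the reduction to that lemma. The one point that requires a moment's care is confirming that proper factors of the chosen $N'$ are themselves factors of $N$; this follows from composing quotient maps, and hence the minimality of $\dim N'$ does force all such proper factors into $\calT$. I do not expect any serious obstacle beyond this bookkeeping.
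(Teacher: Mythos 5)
Your proof is correct and follows essentially the same approach as the paper: the backward direction is identical, and the forward direction reduces to Lemma~\ref{uniqueness} in the same way, with the only cosmetic difference being that you select a quotient of $N$ of minimal dimension not lying in $\calT$ whereas the paper selects a maximal submodule $N' \subseteq N$ with $N/N' \in \calT' \setminus \calT$ — two parametrizations of the same object.
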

\begin{proof}
First we argue that``only if'' direction.
Suppose that every element in $\calT'\setminus \calT$ admits an epimorphism to $M$.
Let $\calG$ be a torsion class with $\calT \subsetneq \calG \subseteq \calT'$, and pick $N\in \calG\setminus \calT$.
Because $M$ is a factor of $N$, we have $M\in \calG$.
Thus, $\calT'=\filt(\calT \cup \{M\}) \subseteq \calG$, as desired.

Conversely, suppose that $\calT'\covers \calT$, and let $N$ be any module in $\calT'\setminus \calT$.
Among all submodules $N'\subseteq N$ such that $N/N'\in \calT'\setminus \calT$, choose $N'$ maximal.
By Lemma~\ref{uniqueness} $N/N'$ is isomorphic to $M$.
We conclude $M$ is a factor of $N$.
\end{proof}

We use the previous proposition to verify that $\eta_\calT$ (from Theorem~\ref{main: covers}) has the correct codomain.
\begin{proposition}\label{thm:pseudo-minimal-covers}
Suppose that $\calT\in \tors \Lambda$ and $M$ is an indecomposable $\Lambda$ module.
If $M$ is a minimal extending module for $\calT$, then $\filt(\calT \cup \{M\})\covers \calT$.
That is, the map $\eta_\calT: \ME(\calT) \to \covup(\calT)$ is well-defined.
\end{proposition}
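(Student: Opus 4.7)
The plan is to combine Lemma~\ref{lem: torsion and schur}, which (via Property~\ref{propertyone}) already guarantees that $\calT' := \filt(\calT \cup \{M\})$ is a torsion class, with Proposition~\ref{prop: epis and covers}. Since $M \notin \calT$ by the remark following Definition~\ref{def:min-ext-module}, one has $\calT \subsetneq \calT'$, so by Proposition~\ref{prop: epis and covers} it suffices to show that every $N \in \calT' \setminus \calT$ admits $M$ as a quotient.

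To extract such a quotient, I would first pass to a ``$\calT$-torsion-free'' piece of $N$. Since $\calT$ is closed under factors and extensions, and hence under sums of submodules inside a fixed ambient module, every finite-dimensional $N$ has a unique maximal submodule $t_\calT(N)$ lying in $\calT$, and the quotient $\overline{N} := N/t_\calT(N)$ contains no nonzero submodule from $\calT$. If $N \notin \calT$ then $\overline{N} \neq 0$, and since $\overline{N}$ is a factor of $N$ it suffices to produce $M$ as a quotient of $\overline{N}$.

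The core step is then the following claim, proved by induction on the length of a $\calT \cup \{M\}$-filtration: if $X \in \calT'$ is nonzero and has no nonzero submodule in $\calT$, then $M$ is a quotient of $X$. Given such an $X$ with filtration $0 = X_0 \subsetneq X_1 \subsetneq \cdots \subsetneq X_l = X$, the bottom piece $X_1 \in \calT \cup \{M\}$ cannot lie in $\calT$, since it would then be a forbidden $\calT$-submodule of $X$, so $X_1 \cong M$. The inductive step is driven by Property~\ref{propertytwo}: for any $Y \subseteq X/X_1$ with $Y \in \calT$, its preimage $Y' \subseteq X$ sits in $0 \to M \to Y' \to Y \to 0$, and Property~\ref{propertytwo} forces either $Y' \in \calT$ (so $Y' = 0$, contradicting $X_1 \cong M \subseteq Y'$) or a splitting (so $Y$ embeds in $X$ as a $\calT$-submodule, forcing $Y = 0$). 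Hence $X/X_1$ inherits the ``no nonzero $\calT$-submodule'' property, induction gives $M$ as a quotient of $X/X_1$ and hence of $X$, and applying the claim to $X = \overline{N}$ finishes the proof.

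I expect the main obstacle to be this inductive step, and in particular the use of Property~\ref{propertytwo} to show that the condition ``no nonzero $\calT$-submodule'' is inherited by $X/X_1$; the rest is essentially bookkeeping once one identifies $\overline{N}$ as the right object on which to run an $M$-by-$M$ filtration from the bottom up.
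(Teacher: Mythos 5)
Your proof is correct, and it takes a genuinely different route from the paper's. Both arguments reduce, via Proposition~\ref{prop: epis and covers}, to showing that every $N\in\calT'\setminus\calT$ has $M$ as a quotient, and both then induct on the length of a $\calT\cup\{M\}$-filtration using Property~\ref{propertytwo} as the engine. The difference lies in how the induction is set up. The paper assumes without loss of generality that $N$ is indecomposable (so that the relevant short exact sequences cannot split), locates the \emph{first} index $i$ with $N_i/N_{i-1}\cong M$, and handles the cases $i=1$ and $i>1$ separately, invoking \ref{propertytwo} in the former case when $N/N_1\in\calT$. You instead normalize at the outset by replacing $N$ with its $\calT$-torsion-free quotient $\overline{N}=N/t_\calT(N)$, which forces the bottom filtration layer to be $M$ with no case split, and then use \ref{propertytwo} in both its split and non-split guises to show that the ``no nonzero $\calT$-submodule'' condition is inherited by $X/X_1$. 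This costs you the introduction of the torsion submodule $t_\calT(-)$, but it buys a cleaner inductive invariant and avoids any appeal to indecomposability. Both are complete proofs; yours is structurally tidier, the paper's is more elementary in its prerequisites.
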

\begin{proof}
Again, denote by $\calT'$ the torsion class $\filt(\calT\cup \{M\})$. Assume, without loss of generality, that $N$ is indecomposable. Properties~\ref{propertyone} and~\ref{propertythree} imply that $M$ satisfies the hypotheses of Proposition~\ref{prop: epis and covers}.
Let $N\in \calT' \setminus \calT$ with $\ind \calT \cup \{M\}$-filtration $N=N_l\supsetneq N_{l-1} \supsetneq \dotsc \supsetneq N_0=0$.
We argue by induction on $l$ that $M$ is a factor of $N$.
If $l=1$, then $N\cong M$ since $N\notin \calT$.
Suppose that $l>1$, and let $i$ to be the smallest index such that $N_i/N_{i-1} \cong M$ (one must exist since $N\notin
  \calT$).

If $i=1$, then there is a short exact sequence 
\[0 \rightarrow M \rightarrow N \rightarrow N/N_1 \rightarrow 0.\]
Since $N$ was assumed indecomposable (and $l>1$), the sequence does
not split.
If $N/N_1 \in \calT$, then by Property~\ref{propertytwo}, $N\in
\calT$, a contradiction.
Otherwise, $N/N_1 \in \calT$ is in $\filt^{(l-1)}(\calT\cup \{M\})$,
so by induction, $M$ is a factor of $N/N_1$, so it is also a factor of
$N$. 

If $i>1$, consider the short exact sequence 
\[0 \rightarrow N_{i-1} \rightarrow N \rightarrow N/N_{i-1} \rightarrow 0\]
which again is non-split by assumption.
Note that since $N_{i-1}$ has a filtration by modules in $\calT$, $N_{i-1}\in
\calT$. 
If $N/N_{i-1}\in \calT$, then so is $N$, since torsion classes are
closed under extensions. 
Therefore, $N/N_{i-1} \notin \calT$, and it has a
$\calT\cup\{M\}$-filtration of length $l-1+1<l$.
Hence, by induction, $M$ is a factor of $N/N_{i-1}$, and therefore of $N$ as well. 

Hence, $M$ is a factor of $N$ for each module $N$ in
$\calT'$, so $\calT\lessdot \calT'$. 
\end{proof}

Below we construct the inverse map to $\eta_\calT$, completing the proof of Theorem~\ref{main: covers}.
Recall that $[\ind\, \Lambda]$ is the set of isoclasses $[M]$ such that $M\in \ind \Lambda$.
\begin{theorem}\label{thm: eta inverse}
For each $\calT'\covers \calT$ in $\tors\Lambda$, there exists a unique (up to isomorphism) indecomposable module $M$ such that $\calT' = \filt(\calT \cup \{M\})$.
Furthermore, the map $\filt(\calT \cup \{M\}) \mapsto [M]$ is the inverse to $\eta_\calT$.
\end{theorem}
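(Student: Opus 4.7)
The plan is to produce, for a given cover $\calT \covered \calT'$, an indecomposable module $M \in \calT' \setminus \calT$ satisfying \ref{propertyone}--\ref{propertythree}; the equation $\calT' = \filt(\calT \cup \{M\})$ will then follow from the cover property, and uniqueness will come for free from Lemma~\ref{uniqueness}, since any other indecomposable $N \in \calT' \setminus \calT$ with all proper factors in $\calT$ must be isomorphic to $M$ once $\calT' = \filt(\calT \cup \{M\})$ is established.

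For existence, I would choose $M$ to be any indecomposable object in $\calT' \setminus \calT$ of minimal $k$-dimension; such an $M$ exists since $\calT' \setminus \calT$ is nonempty and every module decomposes by Krull--Schmidt. Property \ref{propertyone} follows directly from this minimality: if $N$ were a proper factor of $M$ with $N \notin \calT$, then $N$ would lie in $\calT'$ (torsion classes are closed under factors), and some indecomposable summand $N'$ of $N$ would fail to lie in $\calT$ (torsion classes are closed under direct summands) yet would satisfy $\dim N' \leq \dim N < \dim M$, contradicting the choice of $M$. Property \ref{propertythree} is then immediate from the remark following Definition~\ref{def:min-ext-module}: once \ref{propertyone} holds and $M \notin \calT$, any nonzero $f\colon T \to M$ with $T \in \calT$ would exhibit $M$ as an extension of the proper factor $M/\image f \in \calT$ by the factor $\image f \in \calT$, forcing $M \in \calT$.

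The substantive step, and what I expect to be the main obstacle, is verifying \ref{propertytwo}. At this point Lemma~\ref{lem: torsion and schur} already gives that $\filt(\calT\cup\{M\})$ is a torsion class and that $M$ is a brick; since this torsion class properly contains $\calT$ and is contained in $\calT'$, the cover hypothesis upgrades the inclusion to equality $\calT' = \filt(\calT \cup \{M\})$. Now suppose $0 \to M \overset{\iota}{\to} X \overset{p}{\to} T \to 0$ is non-split with $T \in \calT$; then $X \in \calT'$ by extension-closure. Assuming for contradiction that $X \notin \calT$, Proposition~\ref{prop: epis and covers} produces an epimorphism $\pi\colon X \onto M$. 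The composite $\pi \circ \iota \in \End(M)$ is either an isomorphism, in which case $\iota$ splits and we contradict non-splitness, or zero, in which case $\pi$ factors through $p$ as an epimorphism $\bar\pi \colon T \onto M$; but $T \in \calT$ and \ref{propertythree} forces $\bar\pi = 0$, contradicting $M \neq 0$.

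Finally, the map $\calT' \mapsto [M]$ is well-defined by the uniqueness assertion of Lemma~\ref{uniqueness}, and it is literally inverse to $\eta_\calT$ by construction: starting from $\calT' \covers \calT$ one recovers the unique $[M]$ with $\calT' = \filt(\calT \cup \{M\}) = \eta_\calT([M])$, and starting from $[M] \in \ME(\calT)$, Proposition~\ref{thm:pseudo-minimal-covers} gives $\eta_\calT([M]) \covers \calT$ and the image $M$ itself is returned by the inverse. This completes the bijection claimed in Theorem~\ref{main: covers}.
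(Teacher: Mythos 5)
Your proof is correct, and the overall architecture is the same as the paper's: construct a candidate $M$, observe that Lemma~\ref{lem: torsion and schur} makes $\filt(\calT\cup\{M\})$ a torsion class sandwiched strictly between $\calT$ and $\calT'$, use the cover hypothesis to upgrade to equality, verify \ref{propertyone}--\ref{propertythree}, and invoke Lemma~\ref{uniqueness} for uniqueness and well-definedness. The one genuine variation is how you produce $M$: you take an indecomposable of minimal $k$-dimension in $\calT'\setminus\calT$, so that \ref{propertyone} follows from a dimension count; the paper instead fixes an arbitrary $N\in\calT'\setminus\calT$ and takes $M=N/N'$ where $N'$ is a maximal submodule with $N/N'\notin\calT$, so that \ref{propertyone} follows from maximality. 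Both constructions are forced to coincide by Lemma~\ref{uniqueness}, so this is a matter of taste; the maximal-quotient construction is slightly more intrinsic (it only uses the lattice of submodules and the closure properties of $\calT$), while your minimal-dimension construction is more concrete and leans on finite-dimensionality, which is assumed throughout. The verifications of \ref{propertytwo} via Proposition~\ref{prop: epis and covers} and the brick property of $M$, and of \ref{propertythree} via the remark after Definition~\ref{def:min-ext-module}, are exactly the paper's arguments.
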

\begin{proof}[Proof of Theorem~\ref{thm: eta inverse} and Theorem~\ref{main: covers}]
Let $N\in \calT' \setminus \calT$.
As in the proof of Proposition~\ref{prop: epis and covers}, among all submodules $N'$ of $N$ such that $N/N'\in \calT'\setminus \calT$, choose $N'$ maximal.
We take $M$ to be the factor~$N/N'$.
(It is immediate that $M$ is indecomposable.)
Because $\calT'\covers \calT$, we conclude that $\calT' = \filt(\calT \cup \{M\})$.
The same argument as given in the third paragraph of the proof of Proposition~\ref{prop: epis and covers} shows that $M$ is unique up to isomorphism.

To prove the second statement, it is enough to show that $M$ is a minimal extending module.
It is immediate that $M$ satisfies Property~\ref{propertyone}.
Suppose that $0\to M \stackrel{i}\rightarrow X\stackrel{\pi}\rightarrow T\rightarrow 0$ is a short exact sequence with $T\in \calT$.
If $X\in \calT'\setminus \calT$, then Proposition~\ref{prop: epis and
  covers} implies that  there is an epimorphism $q: X\rightarrow
M$. If $q\circ i=0$, then $q$ factors through $\pi$ which implies that
$M$ is a quotient of $T$, contradicting with $M\notin \calT$. If
$q\circ i\neq 0$, then $q\circ i$ is an isomorphism due to the fact
that $M$ is a brick. So the exact sequence splits.
Thus~$M$ satisfies property~\ref{propertytwo}.
Assume that $Q\in \calT$ is a module with a non-zero morphism $f:Q\rightarrow M$.
Without loss of generality we can assume $f$ is a monomorphism, otherwise taking $Q=\image f$.
Then we have the canonical exact sequence\[0\rightarrow Q \rightarrow M \rightarrow M/Q\rightarrow 0\] in which $M/Q$ is a proper factor of $M$, and therefore lies in $\calT$.
Thus $M\in \calT$ (because $\calT$ is closed under extensions), and that is a contradiction.
We conclude that $M$ satisfies~\ref{propertythree}.
\end{proof}
The following corollary will be useful as we explore the connection to
the canonical join complex of $\tors \Lambda$ in
Section~\ref{canonical join complex}.
\begin{corollary}\label{hom orthogonality}
Let $\calT_1$ and $\calT_2$ be distinct torsion classes in $\tors \Lambda$, and for each $i\in \{1,2\}$, let $M_i$ be a minimal extending module for $\calT_i$.
If $\filt(\calT_1 \cup \{M_1\}) = \filt(\calT_2 \cup \{M_2\})$ then \[\dim \Hom_{\Lambda}(M_1,M_2) = \dim \Hom_{\Lambda}(M_2,M_1)=0.\]
\end{corollary}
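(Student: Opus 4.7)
The plan is to assume a nonzero map $f : M_1 \to M_2$ and derive a contradiction; the orthogonality $\Hom(M_2,M_1) = 0$ then follows by symmetry. Write $\calT' = \filt(\calT_1 \cup \{M_1\}) = \filt(\calT_2 \cup \{M_2\})$ for the common cover.

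First I would extract a useful description of $\calT_i$ inside $\calT'$. Proposition~\ref{prop: epis and covers} says that every $N \in \calT' \setminus \calT_i$ has $M_i$ as a factor; conversely, if $M_i$ were a factor of some $N \in \calT_i$ then $M_i$ itself would lie in $\calT_i$ by factor-closure, contradicting $M_i \notin \calT_i$. Hence
\[
\calT_i = \{N \in \calT' : M_i \text{ is not a factor of } N\}.
\]
Two consequences follow immediately: (a) $M_1 \not\cong M_2$, for otherwise the two displayed sets coincide and $\calT_1 = \calT_2$; and (b) neither of $\calT_1, \calT_2$ is contained in the other, since a proper containment $\calT_i \subsetneq \calT_j \subsetneq \calT'$ would violate the cover relation $\calT_i \covered \calT'$.

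Next I would analyze $\image f$. If $f$ is injective, then $M_1$ embeds into $M_2$; P3 for $M_2$ forces $M_1 \notin \calT_2$, so Proposition~\ref{prop: epis and covers} applied to $\calT_2 \covered \calT'$ makes $M_2$ a factor of $M_1$. Combined with the embedding, a dimension count yields $M_1 \cong M_2$, contradicting (a). Otherwise $\image f$ is a proper factor of $M_1$, hence belongs to $\calT_1$ by P1 for $M_1$; the nonzero embedding $\image f \hookrightarrow M_2$ together with P3 for $M_2$ gives $\image f \notin \calT_2$, so Proposition~\ref{prop: epis and covers} makes $M_2$ a factor of $\image f$, and therefore a proper factor of $M_1$. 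Applying P1 for $M_1$ once more places $M_2 \in \calT_1$.

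Finally I would convert $M_2 \in \calT_1$ into a contradiction. By (b), $\calT_2 \not\subseteq \calT_1$, so I can pick $T \in \calT_2 \setminus \calT_1 \subseteq \calT' \setminus \calT_1$; Proposition~\ref{prop: epis and covers} applied to $\calT_1 \covered \calT'$ then makes $M_1$ a factor of $T$, forcing $M_1 \in \calT_2$ by factor-closure. Since $M_2$ is a factor of $M_1$, it lies in $\calT_2$, contradicting $M_2 \notin \calT_2$. The main obstacle is the injective case, where $M_2$ need not be a proper factor of $M_1$ and so P1 cannot be invoked directly; that case forces us through the characterization of $\calT_i$ and the observation $M_1 \not\cong M_2$, which is the only non-mechanical ingredient in the argument.
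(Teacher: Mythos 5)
Your argument is correct, and it ultimately rests on the same mechanism as the paper's proof, namely applying Proposition~\ref{prop: epis and covers} to a module in $\calT_i \setminus \calT_j$, but the route you take is considerably more circuitous. The paper argues directly: since $\calT_1 \neq \calT_2$ are both covered by $\calT'$, neither contains the other, so one can pick $N \in \calT_1 \setminus \calT_2 \subseteq \calT' \setminus \calT_2$; Proposition~\ref{prop: epis and covers} gives a surjection $N \onto M_2$, so $M_2 \in \calT_1$, and symmetrically $M_1 \in \calT_2$; then Property~\ref{propertythree} for $M_1$ and $M_2$ immediately kills both $\Hom$-spaces. You instead run a contradiction argument with a case split on whether $f\colon M_1 \to M_2$ is injective, together with the auxiliary observations that $M_1 \not\cong M_2$ and that $\calT_i = \{N \in \calT' : M_i \text{ is not a factor of } N\}$. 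These are true, but the case split is not needed: your ``Finally'' paragraph derives $M_1 \in \calT_2$ using precisely the paper's argument, and at that point Property~\ref{propertythree} for $M_2$ already gives $\Hom(M_1, M_2) = 0$, contradicting $f \neq 0$, without any appeal to the fact that $M_2$ is a factor of $M_1$ (which is the only thing the injectivity case analysis contributes to your final step). So the proposal reaches the right conclusion and contains the essential idea, but it wraps a short direct argument inside an unnecessarily elaborate contradiction framework.
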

\begin{proof}
Write $\calT'$ for $\filt(\calT_1 \cup \{M_1\}) = \filt(\calT_2 \cup \{M_2\})$.
First, we claim that $M_1$ belongs to $\calT_2$ and $M_2$ belongs to $\calT_1$.
Then the statement follow immediately from Property~\ref{propertythree} of minimal extending modules.
Since $\calT_1$ and $\calT_2$ are distinct and both covered by the same torsion class $\calT'$, it follows that there exists some $N\in \calT_1\setminus \calT_2$.
In particular, $N\in \calT'\setminus \calT_2$.
Proposition~\ref{prop: epis and covers} implies that $N$ surjects onto $M_2$.
Thus, $M_2\in \calT_1$.
By symmetry $M_1\in \calT_2$.
\end{proof}

\subsection{Torsion-free classes and bricks}
We now record the corresponding notions for
torsion-free classes, both for completeness, and for a convenient
proposition relating the upper covers of $\calT$ to the lower
covers of $\calT^\perp$.
(See Proposition~\ref{prop:torsion-free-to-torsion-relation} below.)
Proofs that are essentially equivalent to their counterparts in the
torsion context are suppressed.
We begin by defining the analogue to minimally extending modules.

\begin{definition}
A module $M$ is called a \newword{minimal co-extending module for a torsion-free class $\calF$} if it satisfies the following
three conditions:
\begin{enumerate}[label={(P\arabic*')}]
\item \label{copropertyone} Every proper submodule of $M$ is
  in $\calF$;
\item \label{copropertytwo} if $0\rightarrow F \rightarrow X
  \rightarrow M \rightarrow 0$ is a non-split exact sequence, with
  $F\in \calF$, then $X\in \calF$.
\item\label{copropertythree} $\Hom_\Lambda(M,\calF)=0$
\end{enumerate}
\end{definition}

\begin{theorem}\label{prop:min-ext-module-free}
Suppose that $\calF$ is a torsion-free class over $\Lambda$ and $M\not\in \calF$ is an indecomposable $\Lambda$ module.
The following are equivalent:
\begin{enumerate}
\item $M$ is a minimal co-extending module of $\calF$.
\item Each proper submodule of $M$ lies in $\calF$ and $M$ is a submodule for each $N\in \filt(\calF\cup \{M\})\setminus \calF$.
\item $\filt(\calF \cup\{M\}) \covers \calF$.
\end{enumerate}
The map $\zeta: [M]\mapsto \filt(\calF\cup \{M\})$ is a bijection from the set of isoclasses $[M]$ such that $M$ is a minimal co-extending module for $\calF$ to the set $\covup(\calF)$.
\end{theorem}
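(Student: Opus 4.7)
The plan is to deduce the theorem from the torsion-class analogues (Proposition~\ref{prop: epis and covers}, Proposition~\ref{thm:pseudo-minimal-covers}, Theorem~\ref{thm: eta inverse}, and Theorem~\ref{main: covers}) by applying the duality $D = \Hom_k(-,k): \module \Lambda \to \module \Lambda^{\op}$. Since $D$ is a contravariant equivalence, the assignment $\calF \mapsto D\calF$ is a bijection between $\torf \Lambda$ and $\tors \Lambda^{\op}$ which identifies the cover relations on both sides.

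First, I would establish the dictionary between co-extending data for $(\calF, M)$ in $\module \Lambda$ and extending data for $(D\calF, DM)$ in $\module \Lambda^{\op}$. Because $D$ restricts to a bijection between the proper submodule lattice of $M$ and the proper factor lattice of $DM$, property~\ref{copropertyone} is equivalent to property~\ref{propertyone} applied to $DM$ and $D\calF$. Since $D$ is exact and reverses the direction of short exact sequences while preserving splittings, a non-split sequence $0 \to F \to X \to M \to 0$ with $F \in \calF$ corresponds to a non-split sequence $0 \to DM \to DX \to DF \to 0$ with $DF \in D\calF$, so \ref{copropertytwo} translates to \ref{propertytwo}. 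The natural isomorphism $\Hom_\Lambda(M,\calF) \cong \Hom_{\Lambda^{\op}}(D\calF, DM)$ turns \ref{copropertythree} into \ref{propertythree}. Hence $M$ is a minimal co-extending module for $\calF$ if and only if $DM$ is a minimal extending module for the torsion class $D\calF$ in $\module \Lambda^{\op}$.

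Next, I would observe that exactness of $D$ yields $D(\filt(\calF \cup \{M\})) = \filt(D\calF \cup \{DM\})$, so $\filt(\calF \cup \{M\}) \covers \calF$ in $\torf \Lambda$ if and only if $\filt(D\calF \cup \{DM\}) \covers D\calF$ in $\tors \Lambda^{\op}$. Combining this with the dictionary above, the implication (1)$\Rightarrow$(3) is exactly Proposition~\ref{thm:pseudo-minimal-covers} applied to $DM$ and $D\calF$ in $\module \Lambda^{\op}$, and (3)$\Rightarrow$(1) follows from the uniqueness statement in Theorem~\ref{thm: eta inverse}. The equivalence with (2) dualizes Proposition~\ref{prop: epis and covers}, using that $M$ is a submodule of $N$ if and only if $DM$ is a factor of $DN$. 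Finally, the bijectivity of $\zeta$ is the dualization of Theorem~\ref{main: covers}.

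The only real work is careful bookkeeping under $D$: checking that proper submodules, non-split short exact sequences, $\Hom$-vanishing, and the filtration closure all translate correctly. No genuinely new arguments beyond the torsion-class case are required.
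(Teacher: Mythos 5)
Your proof is correct and is a clean formalization of what the paper intends by its remark that ``proofs essentially equivalent to their counterparts in the torsion context are suppressed'': the exact contravariant equivalence $D=\Hom_k(-,k)$ translates each of properties~\ref{copropertyone}--\ref{copropertythree} for $(\calF,M)$ into properties~\ref{propertyone}--\ref{propertythree} for $(D\calF,DM)$ over $\Lambda^{\op}$, commutes with $\filt$, preserves non-splitness and covers, and sends torsion-free classes of $\Lambda$ to torsion classes of $\Lambda^{\op}$, so the stated equivalences and the bijectivity of $\zeta$ fall out of Propositions~\ref{prop: epis and covers} and~\ref{thm:pseudo-minimal-covers} and Theorem~\ref{thm: eta inverse}. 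The one step worth spelling out in (3)$\Rightarrow$(1) is that the module in Theorem~\ref{thm: eta inverse} is unique among \emph{all} indecomposables $M''$ with $\calT'=\filt(\calT\cup\{M''\})$, which is exactly the mutual-filtration dimension count underlying Lemma~\ref{uniqueness}.
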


\begin{proposition}\label{prop:torsion-free-to-torsion-relation}
Suppose that $\calT$ is a torsion class in $\tors \Lambda$ and $M$ is
an indecomposable $\Lambda$ module such that $\filt(\calT \cup \{M\})$
is a torsion class. Then $M$ is a minimal extending module for the torsion class $\calT$ if and only if it is a minimal co-extending module for the torsion-free class $\filt(\calT \cup\{M\})^\perp$.
\end{proposition}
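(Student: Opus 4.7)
The plan is to combine Proposition~\ref{thm:pseudo-minimal-covers} with Theorem~\ref{prop:min-ext-module-free} via the anti-isomorphism $(-)^\perp\colon \tors \Lambda \to \torf \Lambda$ from Section~\ref{sec: background torsion classes}. Write $\calT' = \filt(\calT \cup \{M\})$, $\calF = \calT^\perp$, and $\calF' = (\calT')^\perp$, so the anti-isomorphism sends the inclusion $\calT \subseteq \calT'$ to $\calF \supseteq \calF'$ and converts a cover $\calT' \covers \calT$ into a cover $\calF \covers \calF'$. For the forward direction, assume $M$ is a minimal extending module for $\calT$; by Proposition~\ref{thm:pseudo-minimal-covers} we have $\calT' \covers \calT$, hence $\calF \covers \calF'$. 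I will show $\filt(\calF' \cup \{M\}) = \calF$, after which Theorem~\ref{prop:min-ext-module-free} immediately yields that $M$ is a minimal co-extending module for $\calF'$.

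One containment is immediate: Property~\ref{propertythree} places $M$ in $\calT^\perp = \calF$, and $\calF$ is extension-closed, so $\filt(\calF' \cup \{M\}) \subseteq \calF$. For the other, I argue that $\filt(\calF' \cup \{M\})$ is a torsion-free class that strictly contains $\calF'$; the cover $\calF \covers \calF'$ will then force equality with $\calF$. Strict containment is clear because $M \in \calT'$ has $\id_M \neq 0$ and so $M \notin \calF'$. Closure under extensions is Lemma~\ref{lem-ext-closure}, so the main obstacle is closure under submodules. By the submodule analogue of Lemma~\ref{lem-epi-closure}, this reduces to showing that every proper submodule $L \subsetneq M$ lies in $\calF'$, i.e., that $\Hom_\Lambda(T, L) = 0$ for every $T \in \calT'$. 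I will prove this by induction on the length of a $\calT \cup \{M\}$-filtration of $T$. In the base case $T \in \calT$, Property~\ref{propertythree} gives $\Hom_\Lambda(T, M) = 0$, and since $L \into M$ is a monomorphism, any map $T \to L$ must vanish. In the base case $T = M$, Lemma~\ref{lem: torsion and schur} says $M$ is a brick, so any composition $M \to L \into M$ is either zero or an isomorphism, and the latter is impossible because $L \subsetneq M$. The inductive step uses the left-exactness of $\Hom(-, L)$ applied to a short exact sequence extracted from the filtration.

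The reverse direction follows by the symmetric argument. One can either invoke the duality $D = \Hom_k(-, k)\colon \module \Lambda \to \module \Lambda^{\op}$ of Section~\ref{sec: background torsion classes} and transfer the forward direction over to $\Lambda^{\op}$, or run the forward proof with submodules and factors interchanged: apply Theorem~\ref{prop:min-ext-module-free} to get $\filt(\calF' \cup \{M\}) \covers \calF'$; transport this cover through the anti-isomorphism to the cover $\calT' \covers {}^\perp \filt(\calF' \cup \{M\})$; and then identify the covered torsion class with $\calT$ by the dual filtration-and-brick argument using Properties~\ref{copropertyone}--\ref{copropertythree} in place of~\ref{propertyone}--\ref{propertythree}.
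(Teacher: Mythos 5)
Your proof of the forward direction is correct but takes a genuinely different path through Theorem~\ref{prop:min-ext-module-free} than the paper does. You establish condition~(3) of that theorem directly: you transport the cover $\calT'\covers\calT$ (obtained from Proposition~\ref{thm:pseudo-minimal-covers}) through the anti-isomorphism $(-)^\perp$ to obtain $\calT^\perp\covers(\calT')^\perp$, then sandwich $\filt\bigl((\calT')^\perp\cup\{M\}\bigr)$ between these two torsion-free classes. This forces you to verify that $\filt\bigl((\calT')^\perp\cup\{M\}\bigr)$ really is a torsion-free class, which you reduce---via the dual of Lemma~\ref{lem-epi-closure}---to the claim that every proper submodule of $M$ lies in $(\calT')^\perp$, and this you prove by induction on filtration length together with the brick property of $M$ from Lemma~\ref{lem: torsion and schur}. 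The paper instead verifies condition~(2) of Theorem~\ref{prop:min-ext-module-free} directly: it proves the same ``proper submodules of $M$ lie in $(\calT')^\perp$'' claim, but via a different mechanism (any nonzero map $N\to M'\subsetneq M$ from $N\in\calT'\setminus\calT$ has image simultaneously in $\calT'\cap\calT^\perp$, hence by Proposition~\ref{prop: epis and covers} surjecting onto $M$, forcing $M'=M$), and then additionally shows that $M$ embeds into every $X\in\calT^\perp\setminus(\calT')^\perp$. Your approach trades that embedding argument for the verification that the filtration closure is a torsion-free class; both rely on Proposition~\ref{prop: epis and covers}, you indirectly through Proposition~\ref{thm:pseudo-minimal-covers}, the paper directly. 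Either route is clean; yours is perhaps slightly more conceptual in that the cover relation does the work, while the paper's is more hands-on with the characterization.

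Two small cautions. First, in your base case $T=M$: the observation that $i\circ\phi$ being zero forces $\phi=0$ (because $i$ is a monomorphism) should be made explicit, since the brick property alone only handles the case $i\circ\phi\neq 0$. Second, for the reverse implication, the duality route needs careful bookkeeping---after transporting through $D$ and applying the forward direction over $\Lambda^{\op}$, you must still identify ${}^\perp\!\filt\bigl((\calT')^\perp\cup\{M\}\bigr)$ with $\calT$, which requires knowing $M\notin\calT$; this is automatic in the forward direction (Property~\ref{propertythree} gives $M\in\calT^\perp$, forcing $M\notin\calT$), but in the reverse direction you need to extract it from the co-extending hypotheses before the identification can go through. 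The paper elides this by declaring the reverse implication ``similar,'' so you are no worse off, but it is worth noticing that this step is not entirely formal.
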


\begin{proof}
We argue the forward implication of the proposition; the reverse implication is similar. 
Let $\calT'$ denote the torsion class $\eta_\calT([M])=\filt(\calT \cup \{M\})$.
Observe that $M\not\in (\calT')^\perp$.
Also, Property~\ref{propertythree} implies that $M$ belongs to $\calT^\perp$.
By Theorem~\ref{prop:min-ext-module-free}, it is enough to show that $M$ satisfies the following:
First, each proper submodule of $M$ belongs to $\calT^\perp$; and second, $M$ is a submodule of each $X\in (\calT')^\perp \setminus \calT^\perp$.

Suppose that $M'$ is an indecomposable submodule of $M$, and $N$ belongs to $\calT'\setminus \calT$.
We claim that $\Hom_\Lambda(N, M')$ is nonzero only if $M'=M$.
Let $\phi: N\rightarrow M'$ be such a nonzero homomorphism.
On one hand, $\image \phi \in \calT'$ (because $\calT'$ is closed under epimorphisms).
On the other hand, $\image\phi \in \calT^\perp$ (because torsion-free classes are closed under submodules).
In particular, $\image\phi\not\in \calT$.
By Proposition~\ref{prop: epis and covers}, there is a surjection of $\image \phi$ onto $M$.
Because $\image\phi$ is a submodule of $M$, we obtain $\image\phi\cong M'\cong M$, as desired.
We conclude that each proper submodule of $M$ belongs to
$(\calT')^\perp$.

Suppose that $X\in \calT^\perp \setminus (\calT')^\perp$, and let $f: N\to X$ be a nonzero morphism from a module $N\in \calT'\setminus \calT$.
We may assume that $f$ is injective.
(If it is not injective, then replace $N$ with $N/\ker(f)$.)
We claim that $M$ is a submodule of $N$. 
Let $N=N_l \supsetneq \dotsc \supsetneq N_0=0$ be an $\ind(\calT)\cup \{M\}$-filtration of $N$.
Since $N\notin \calT$, there is some index $i$ such that $N_i/N_{i-1}$ is isomorphic to $M$.
For the moment, assume that $i>1$, so that $N_1\in \calT$.
Then we have a nonzero homomorphism $N_1\hookrightarrow N \hookrightarrow X$ from a module in $\calT$ to $X$.
That is a contradiction.
Thus, $M\cong N_1 \hookrightarrow N \hookrightarrow X$ as desired.
\end{proof}

We close this section by completing the proof of Theorem~\ref{main: cor schur}.
Recall that in Lemma~\ref{lem: torsion and schur}, we showed that if
$M$ is a minimal extending module, then it is a brick (the forward implication in Theorem~\ref{main: cor schur}).
By Proposition~\ref{prop:torsion-free-to-torsion-relation} it is enough to show:
If $M$ is a brick, then there exists a torsion-free class $\calF$ such that $M$ is a minimal co-extending module for $\calF$.
\begin{proposition}\label{if part}
If $M$ is a brick over $\Lambda$, then $M$ is a minimal co-extending module for $\filt(\Gen M)^{\perp}$.
\end{proposition}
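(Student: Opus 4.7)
The plan is to set $\calT_0 := \filt(\Gen M)$, so that $\calF := \filt(\Gen M)^\perp = \calT_0^\perp$, and to verify the three defining properties of a minimal co-extending module in turn. Property \ref{copropertythree} is immediate: $M \in \calT_0$ by construction and $\calF = \calT_0^\perp$, so $\Hom(M,\calF) = 0$ by definition.

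For Property \ref{copropertyone}, fix a proper submodule $N \subsetneq M$; the goal is to show $\Hom(T,N) = 0$ for every $T \in \calT_0$. The crucial special case is $T = M$: any $f : M \to N$ composed with the inclusion $N \hookrightarrow M$ is an endomorphism of $M$, which must be zero or invertible by the brick hypothesis, and invertibility would force $N = M$. Hence $\Hom(M,N) = 0$. For $T \in \Gen M$, a surjection $M^n \twoheadrightarrow T$ combined with the left-exactness of $\Hom(-,N)$ yields $\Hom(T,N) \hookrightarrow \Hom(M,N)^n = 0$. For general $T \in \filt(\Gen M)$, I will induct on the filtration length, applying $\Hom(-,N)$ to $0 \to T_{l-1} \to T \to T/T_{l-1} \to 0$ and noting that both outer Hom-groups vanish.

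For Property \ref{copropertytwo}, fix a non-split extension $0 \to F \to X \xrightarrow{\pi} M \to 0$ with $F \in \calF$; the goal is $X \in \calF$. Applying $\Hom(M,-)$ produces the exact sequence
\[
0 \to \Hom(M,F) \to \Hom(M,X) \to \End(M) \xrightarrow{\delta} \Ext^1(M,F),
\]
in which $\Hom(M,F) = 0$. The image of $\Hom(M,X)$ inside $\End(M)$ is a right $\End(M)$-submodule; since $M$ is a brick, $\End(M)$ is a division ring, so this submodule is either zero or all of $\End(M)$. The non-split hypothesis gives $\delta(\id_M) = [X] \neq 0$ in $\Ext^1(M,F)$, ruling out the latter, and forcing $\Hom(M,X) = 0$. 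The vanishing of $\Hom(T,X)$ for arbitrary $T \in \calT_0$ then propagates by the same two-step argument as in Property \ref{copropertyone}: first handle $T \in \Gen M$ via a surjection $M^n \twoheadrightarrow T$, then induct on the filtration length.

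The step I expect to be the main obstacle is the appeal to the division ring structure in Property \ref{copropertytwo}, where one rules out a lift of $\id_M$ to a map $M \to X$. This is the only place in the argument where the full brick hypothesis — and not merely indecomposability of $M$ — is essential: without $\End(M)$ being a division ring, a proper right ideal of $\End(M)$ could conceivably miss the identity without being zero, and the image of $\Hom(M,X)$ would no longer be forced to vanish.
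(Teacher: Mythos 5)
Your proof is correct and follows essentially the same route as the paper's: verify \ref{copropertythree} immediately, use the brick hypothesis to get $\Hom(M,N)=0$ for proper submodules $N\subsetneq M$, use the brick hypothesis plus the non-split condition to get $\Hom(M,X)=0$ in the setting of \ref{copropertytwo}, and propagate these vanishing statements from $M$ to all of $\filt(\Gen M)$. The only cosmetic differences are that you run the propagation induction twice (once each for \ref{copropertyone} and \ref{copropertytwo}) whereas the paper isolates the identity $\filt(\Gen M)^\perp=\{X:\Hom_\Lambda(M,X)=0\}$ as a single claim, and that you phrase the \ref{copropertytwo} argument via the Hom--Ext long exact sequence and the right-ideal structure of $\End(M)$ instead of the paper's direct composition $g\circ\pi$; both formalizations rest on the same use of the division-ring property.
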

\begin{proof}[Proof of Proposition~\ref{if part} and Theorem~\ref{main: cor schur}]

Suppose that $M$ is a brick over $\Lambda$.  We will argue that $M$ is a minimal co-extending module for the torsion-free class $\calF=\filt(\Gen M)^\perp$.

First, we claim that $\filt(\Gen M)^\perp$ coincides with the set $\{X:\,\Hom_\Lambda(M, X)=0\}$.
(This claim is equivalent to~\ref{copropertythree}.)
It is immediate that $\filt(\Gen M)^\perp \subseteq \{X:\,\Hom_\Lambda(M, X)=0\}$.
Suppose that $X$ is an indecomposable module satisfying:
$\Hom_\Lambda(M,X)=0$ and $\Hom_\Lambda(N, X)\neq 0$ for some
indecomposable $N\in \filt(\Gen M)$.
Write $f: N\rightarrow X$ for such a non-zero homomorphism.
Let $N_l \supsetneq \dotsc \supsetneq N_0=0$ be a filtration of $N$ such that $N_i/N_{i-1}$ is a factor of $M$.
Choose $i$ to be the smallest index for which $f(N_i)\neq 0$, so that $f(N_i)/f(N_{i-1})=f(N_i)\subseteq X$.
Since is $N_i/N_{i-1}$ is a factor of $M$, we have the following nonzero composition:
\[M \twoheadrightarrow N_i/N_{i-1} \twoheadrightarrow f(N_i) \subseteq X.\]
That is a contradiction, because $\Hom_\Lambda(M,X) = 0$.

To verify ~\ref{copropertyone}, let $M'$ be a proper submodule of
$M$. If $\Hom_\Lambda(M,M')\neq 0$, then $M$ is not a brick, a contradiction. Thus, $M'\in\{X:\,\Hom(M,X)=0\}=\filt(\Gen M)^\perp$.  

To verify~\ref{copropertytwo}, suppose that there is a non-split short exact sequence \[ 0\rightarrow F \xrightarrow{f} X
\xrightarrow{g} M \rightarrow 0\] with $F\in \calF$.
If $X\notin \calF$, then there is a nonzero homomorphism $\pi: M\rightarrow X$.
Since $M$ is a brick, and $g\circ \pi$ is a endomorphism of $M$ (which is not an isomorphism since the sequence is non-split), $g\circ \pi=0$. 
Hence, $\pi$ factors through $f$.
Since $\Hom_\Lambda(M,F)=0$, we have a contradiction.

Therefore, $M$ is a minimal co-extending module for $\calF$.
The statement follows from Proposition~\ref{prop:torsion-free-to-torsion-relation}.
\end{proof}

\section{Applications}
We consider two applications of Theorems~\ref{main: covers} and~\ref{main: cor schur}.
First, in Propostion~\ref{ji}, we prove that there is a bijective
correspondence between isoclasses $[M]$ of bricks over $\Lambda$ and
completely join-irreducible torsion classes in $\tors \Lambda$ via the
map sending $[M]$ to $\filt(\Gen(M)).$
Second, we consider the canonical join complex of $\tors \Lambda$,
proving the forward implication of Theorem~\ref{main: cjr} in Proposition~\ref{cjr only if}, and we completing the proof in Proposition~\ref{cjr end}.

\label{sec: ji}
\subsection{Completely join-irreducible torsion classes}
In this section, we prove Theorem~\ref{schur modules to join-irreducibles} as Proposition~\ref{ji} below.
Recall that $\calT$ is \newword{completely join-irreducible} if for each (possibly infinite) subset $A\subseteq \tors \Lambda$, $\calT = \Join A $ implies that $\calT\in A$.
Equivalently, $\calT$ is completely join-irreducible if and only if it covers precisely one torsion class $\calS$, and $\calG\subseteq \calS$ for any torsion class $\calG\subsetneq \calT$.
In the statement below, recall that $\Gen M$ denotes the factor-closure of $\add M$.
\begin{proposition}\label{ji}
The torsion class $\calT$ is completely join-irreducible if and only
if there exists a brick $M$ such that $\calT$ is equal to $\filt(\Gen M)$.
In this case, the brick $M$ is unique up to isomorphism. 
In particular, the map $\zeta: [M]\mapsto \filt(\Gen M)$ from
Theorem~\ref{schur modules to join-irreducibles} is a bijection. 
\end{proposition}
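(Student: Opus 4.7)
The plan is to verify the two implications of the equivalence, then deduce both the uniqueness of $M$ and the bijectivity of $\zeta$ from what is already in place.

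For the forward direction, starting from $\calT$ completely join-irreducible, I would invoke the characterization that $\calT$ covers a unique torsion class $\calS$ with every $\calG \subsetneq \calT$ contained in $\calS$. Theorem~\ref{main: covers} then yields a (unique up to isomorphism) indecomposable module $M$ with $\calT = \filt(\calS \cup \{M\})$, and Theorem~\ref{main: cor schur} upgrades this to a brick. To pivot from $\filt(\calS \cup \{M\})$ to $\filt(\Gen M)$, I would note that $M \in \calT$ gives $\filt(\Gen M) \subseteq \calT$ via factor-closure of $\calT$, while Property~\ref{propertythree} prevents $M$ from lying in $\calS$, so $\filt(\Gen M)$ cannot be contained in $\calS$. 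Complete join-irreducibility then forces equality.

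For the reverse direction, starting from $\calT = \filt(\Gen M)$ with $M$ a brick, I would first apply Proposition~\ref{if part} and Proposition~\ref{prop:torsion-free-to-torsion-relation} in tandem to produce a torsion class $\calT_0$ for which $M$ is minimal extending and $\calT = \filt(\calT_0 \cup \{M\})$, so that $\calT_0 \covered \calT$. I would then argue that any proper subtorsion class $\calG \subsetneq \calT$ is contained in this $\calT_0$: first, $M \notin \calG$, since otherwise $\calT = \filt(\Gen M) \subseteq \calG$ via factor-closure; second, any $N \in \calG \cap (\calT \setminus \calT_0)$ would, by Proposition~\ref{prop: epis and covers}, surject onto $M$, forcing $M \in \calG$ and contradicting the first point. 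Thus $\calG \subseteq \calT_0$, and $\calT$ is completely join-irreducible with unique lower cover $\calT_0$.

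Uniqueness of $M$ then reduces to Theorem~\ref{main: covers}: if $\filt(\Gen M) = \filt(\Gen M')$ for bricks $M, M'$, the reverse-direction argument identifies the common unique lower cover $\calT_0$, on which both $M$ and $M'$ are minimal extending modules, and the bijection $\eta_{\calT_0}$ forces $[M] = [M']$. This gives injectivity of $\zeta$, and surjectivity is exactly the forward implication. The main obstacle is the reverse implication, specifically the step that upgrades a single lower cover of $\calT$ (produced by duality from Proposition~\ref{if part}) to a characterization of \emph{all} proper subtorsion classes of $\calT$; the leverage for this comes from Proposition~\ref{prop: epis and covers}, which forces every module in $\calT \setminus \calT_0$ to surject onto $M$, combined with factor-closure.
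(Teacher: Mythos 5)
Your proposal is correct and follows essentially the same route as the paper: the forward direction uses Theorem~\ref{main: covers}/\ref{main: cor schur} at the unique lower cover and pivots from $\filt(\calS\cup\{M\})$ to $\filt(\Gen M)$ by complete join-irreducibility, and the reverse direction uses Proposition~\ref{if part} together with Proposition~\ref{prop:torsion-free-to-torsion-relation} to produce the lower cover and Proposition~\ref{prop: epis and covers} to show it absorbs every proper subtorsion class. Your explicit uniqueness argument via $\eta_{\calT_0}$ fills in a step the paper leaves implicit, but it is the intended one.
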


\begin{proof}[Proof Proposition~\ref{ji} and Theorem~\ref{schur modules to join-irreducibles}]
Suppose that the torsion class $\calT$ is completely join-irreducible
and write $\calS$ for the unique torsion class covered by $\calT$.
Theorem~\ref{main: cor schur} implies that there exists a unique (up to isomorphism) brick $M$ such that ${\calT = \filt(\calS \cup \{M\})}$.
We claim that $\calT = \filt(\Gen M)$.
Observe that $\filt(\Gen M)$ is contained in $\calT$, and $\filt(\Gen M) \not\subseteq\calS$ (because $M\not\in \calS$).
The claim follows.

Conversely, let $M$ be a brick over $\Lambda$.
In the proof of Proposition~\ref{if part}, we showed that $M$ is a minimal co-extending for $\filt(\Gen M)^\perp$.
By Proposition~\ref{prop:torsion-free-to-torsion-relation}, there exists a torsion class $\calS$ such that $\filt(\Gen M) \covers \calS$ and $M$ is a minimal extending module for $\calS$.
(That is $\filt(\Gen M) = \filt(\calS \cup \{M\})$.)
Suppose that $\calG\subseteq \filt(\Gen M)$.
If $\calG\not \subseteq \calS$ then there exists some module $N\in \calG\setminus \calS$.
In particular, $N\in\filt(\Gen M) \setminus \calS$.
Proposition~\ref{prop: epis and covers} says that $M$ is a factor of $N$, hence $M\in \calG$.
We conclude that $\calG= \filt(\Gen M)$.
Thus any torsion class $\calG\subsetneq\filt(\Gen M)$ also satisfies
$\calG\subseteq\calS$. 

We conclude that $\filt(\Gen M)$ is completely join-irreducible.
\end{proof}

\begin{remark}\label{kronecker}
\normalfont
There exist join-irreducible torsion classes which are not \textit{completely} join-irreducible.
Consider $\module kQ$ where $Q$ is the Kronecker quiver and $k$ is
algebraically closed (see Figure~\ref{fig: kronecker}).
\begin{figure}[h]
\begin{tikzpicture}
\node (A) at (0,0) {1};
\node (B) at (2,0) {2};
\draw[->] ([yshift=-.2em] A.east)--([yshift=-.2em]B.west) node[midway,below] {b};
  \draw[->] ([yshift=.2em] A.east)--([yshift=.2em] B.west) node[midway,above]
  {a};
  \end{tikzpicture}
  \caption{The Kronecker quiver.}
  \label{fig: kronecker}
  \end{figure}
Let $n$ be a non-negative integer, and write $V_n$ for the representation defined as follows:
$V_n(1) = k^{n+1}$, $V_n(2) = k^n$; $V_n(a)=\begin{bmatrix} I_n \,\, \mathbf{0}\end{bmatrix}$ and $V_n(b)=\begin{bmatrix} \mathbf{0}\,\, I_n\end{bmatrix}$ where $I_n$ is the $n\times n$ identity matrix, and $\mathbf{0}$ is a column of zeros.
(The module corresponding to each $V_n$ is indecomposable and preinjective; see e.g. \cite[VIII.1]{ARS}.)
Let $\calI_n$ denote the additive closure of $\{V_0,V_1,\dotsc, V_n\}$, and $\calI_\infty=\bigcup\limits_{n\geq 0} \calI_n$.
It is an easy exercise to verify that both $\calI_n$ and $\calI_\infty$ are torsion classes and  $\calI_n \covered \calI_{n+1}<\calI_\infty$ for each $n$.
Observe that $\calI_\infty$ is join-irreducible, but not \textit{completely} join-irreducible.
In particular, it does not cover any elements in $\tors kQ$.
Each brick in $\calI_\infty$ is isomorphic to $V_n$ for some $n\geq
0$, and it can be shown that $\calI_\infty$ cannot be expressed as $\filt(\Gen V_n)$ for
any such $n$. 
\end{remark}

\subsection{The canonical join complex of $\tors \Lambda$}\label{canonical join complex}
In this section, we characterize certain faces of the canonical join complex of $\tors \Lambda$.
Before we begin, we review the necessary lattice-theoretic terminology.
Recall that a lattice $L$ is a poset such that, for each finite subset $A\subseteq L$, the \newword{join} or least upper bound $\Join A$ exists and, dually, the \newword{meet} or greatest lower bound $\Meet A$ exists.
The lattice $\tors \Lambda$ is a \newword{complete lattice}, meaning that $\Join A$ and $\Meet A$ exist for arbitrary subsets $A$ of torsion classes.
A subset $A\subseteq L$ is an \newword{antichain} if the elements in $A$ are not comparable.
The \newword{order ideal} generated by $A$ is the set of $w\in L$ such that $w\le a$ for some element $a\in A$.

Recall that the canonical join representation of an element $w$ is the unique ``lowest'' way to write $w$ as the join of smaller elements.
Below, we make these notions precise.
A \newword{join representation} of an element $w$ in a complete lattice is an expression $w=\Join A$, where $A$ is (possibly infinite) subset of $L$.
We say that $\Join A$ is \newword{irredundant} if $\Join A' < \Join A$, for each proper subset $A'\subsetneq A$.
Observe that if $\Join A$ is irredundant, then $A$ is an antichain.

Consider the set of all irredundant join representation for $w$.
(Note that $\Join \{w\}$ is an irredundant join representation of $w$.)
We partially order the irredundant join representations of $w$ as follows:
Say $A \refines B$ if the order ideal generated by $A$ is contained in the order ideal generated by $B$.
Equivalently, $A\refines B$ if, for each $a\in A$, there exists some $b\in B$ such that $a\le b$.
Informally, we say that $A$ is ``lower'' than~$B$.

The \newword{canonical join representation of $w$} is the unique lowest irredundant join representation $\Join A$ of $w$, when such a representation exists.
In this case, we also say that the set $A$ is a canonical join representation.
The elements of $A$ are called \newword{canonical joinands} of $w$.
If $w$ is join-irreducible then $\Join \{w\}$ is the canonical join representation of $w$.
Conversely, each canonical joinand is join-irreducible.
\begin{figure}[h]
  \centering
 {\scalebox{.8}{ \includegraphics{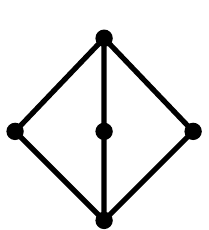}}}
  \caption{The top element does not have a canonical join representation}
  \label{fig:diamond}
\end{figure}

\begin{remark}\label{cjr dne}
\normalfont
In general, the canonical join representation of an element may not exist.
For example, see Figure \ref{fig:diamond}.
Observe that the join of each pair of atoms is a minimal, irredundant join representation of the top element.
Since there is no \textit{unique} such join representation, we conclude that the canonical join representation does not exist.
\end{remark}

The \newword{canonical join complex of $L$}, denoted $\Gamma(L)$, is the collection of subsets $A\subseteq L$ such $A$ is a canonical join representation.
In following proposition (essentially \cite[Proposition~2.2]{arcs}) we show that $\Gamma(L)$ is closed under taking subsets.
In the statement of \cite[Proposition~2.2]{arcs}, the lattice $L$ is finite.
A standard argument from lattice theory shows that the proposition also holds when $L$ is a complete meet-semilattice.
\begin{proposition}\label{complex}
Suppose that $\Join A$ is a canonical join representation in a complete meet-semilattice $L$.
Then, for each $A'\subseteq A$, the join $\Join A'$ is also canonical join representation.
\end{proposition}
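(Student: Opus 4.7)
I would verify the two defining properties of a canonical join representation for $A' \subseteq A$: that $A'$ is an irredundant join representation of $w' := \Join A'$, and that $A' \refines B$ for every irredundant join representation $B$ of $w'$.

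For irredundancy, suppose for contradiction that $\Join(A' \setminus \{a\}) = \Join A'$ for some $a \in A'$. Joining both sides with $\Join(A \setminus A')$ would yield $\Join(A \setminus \{a\}) = \Join A = w$, contradicting the irredundancy of $A$. As an immediate consequence, $A'$ is an antichain.

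The main step is to show $A' \refines B$ for an arbitrary irredundant join representation $B$ of $w'$. Set $C := B \cup (A \setminus A')$, so that $\Join C = \Join B \vee \Join(A \setminus A') = w' \vee \Join(A \setminus A') = w$. I would then pass to an irredundant subset $D \subseteq C$ with $\Join D = w$. Since $A$ is the canonical join representation of $w$, we have $A \refines D$, so for each $a \in A$ there exists some $d \in D \subseteq C$ with $a \leq d$. For $a \in A'$, the witness $d$ cannot lie in $A \setminus A'$: otherwise $a$ and $d$ would be distinct comparable elements of the antichain $A$. Hence $d \in B$, which gives $A' \refines B$, so $A'$ is the canonical join representation of $w'$.

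The main obstacle is the step that refines $C$ to an irredundant subset $D$ with the same join. When $C$ is finite this is routine -- one simply removes redundant elements iteratively. In the complete meet-semilattice setting one either argues transfinitely or uses an equivalent characterization of canonical join representations that bypasses the refinement step (for instance, replacing "lowest among irredundant join representations" by "lowest among all join representations" when a canonical representation exists). This is the "standard argument from lattice theory" referenced by the authors, and is the only technical subtlety in the proof.
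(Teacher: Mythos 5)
Your proof of irredundancy of $A'$ is correct and clean. For the main step, the strategy---setting $C = B \cup (A\setminus A')$, passing to an irredundant subrepresentation $D \subseteq C$ of $w$, invoking $A \refines D$, and using the antichain property of $A$ to force the witness into $B$---is the standard finite-lattice argument, and it is presumably the one the authors have in mind: they give no proof, only citing Reading's Proposition~2.2 for the finite case and appealing to ``a standard argument'' for the extension.

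The obstacle you flag, however, is a genuine gap, and neither of the repairs you propose closes it. Take the complete lattice $L = \mathbb{N}\cup\{\infty\}$. The set $\mathbb{N}$ satisfies $\Join\mathbb{N} = \infty$, yet \emph{no} subset of $\mathbb{N}$ joining to $\infty$ is irredundant: any finite subset has finite join, and any infinite subset still joins to $\infty$ after one element is deleted. So irredundant subrepresentations need not exist, and a transfinite or Zorn-type argument does not rescue this---a descending chain of subsets each joining to $w$ can have an intersection whose join drops strictly below $w$ (e.g.\ $B_k = \{k,k+1,\dots\}$ with $\bigcap_k B_k=\emptyset$). The same example refutes your proposed equivalent characterization: $\{\infty\}$ is the unique, hence canonical, irredundant join representation of $\infty$, yet $\{\infty\}\not\refines\mathbb{N}$, so a canonical join representation does \emph{not} in general refine arbitrary (possibly redundant) join representations.

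So the gap is not merely technical bookkeeping. Closing it requires either exploiting more of the specific structure present here---for instance, one can show that any $D\subseteq C$ with $\Join D = w$ must contain all of $A\setminus A'$ (removing any such element strictly drops the join, by irredundancy of $A$ and $\Join B = \Join A'$), so the redundancy is confined to the $B$-part and $B$ itself is irredundant---or imposing a further hypothesis on $L$ such as complete join-semidistributivity, which holds for $\tors\Lambda$ but is not assumed in the proposition as stated. Your write-up correctly locates the subtlety but underestimates its depth, and the two fixes named in your final paragraph are, as stated, both incorrect.
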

%
With Proposition~\ref{complex}, we conclude that the canonical join complex $\Gamma(\tors \Lambda)$ is indeed a simplicial complex.
We are finally prepared to prove Theorem~\ref{main: cjr}.
In the next proposition, we tackle the easier direction of the proof.
\begin{proposition}\label{cjr only if}
Suppose that $\calE$ is a collection of bricks over $\Lambda$.
If the set $\{\filt(\Gen M):\, M\in \calE\}$ is a canonical join representation, then each pair of modules $M$ and $M'$ is hom-orthogonal.
\end{proposition}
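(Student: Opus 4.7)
The plan is to argue by contradiction. Suppose $\{\calT_M:=\filt(\Gen M):M\in\calE\}$ is a canonical join representation of $\calT:=\Join_{M\in\calE}\calT_M$, yet there exist distinct $M,M'\in\calE$ admitting a nonzero morphism $f:M\to M'$. I aim to produce an irredundant join representation of $\calT$ that strictly refines the canonical one, contradicting its defining property as the unique lowest irredundant join representation.

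First, I rule out surjectivity of $f$: if $f$ were surjective, then $M'\in\Gen M\subseteq\calT_M$, forcing $\calT_{M'}\subseteq\calT_M$ and contradicting irredundancy of the canonical representation. Thus $\image f$ is a proper submodule of $M'$, so $M'/\image f$ is a proper factor of the brick $M'$. By Proposition~\ref{ji}, the torsion class $\calT_{M'}$ covers a unique element $\calS_{M'}$ and $M'$ is a minimal extending module for $\calS_{M'}$; property~\ref{propertyone} of $M'$ then places $M'/\image f$ in $\calS_{M'}$. Since $\image f$ is a quotient of $M$, it lies in $\calT_M$. Applying extension closure of joins in $\tors\Lambda$ to the short exact sequence $0\to\image f\to M'\to M'/\image f\to 0$ gives $M'\in\calT_M\vee\calS_{M'}$, and therefore the key identity $\calT_M\vee\calT_{M'}=\calT_M\vee\calS_{M'}$.

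Using this identity, I replace $\calT_{M'}$ by $\calS_{M'}$ in the canonical representation to form $A':=\{\calS_{M'}\}\cup\{\calT_N:N\in\calE,\ N\neq M'\}$, which also joins to $\calT$. Passing to an irredundant sub-representation $A''\subseteq A'$ with $\Join A''=\calT$, I observe that every element of $A''$ is bounded above by an element of $A:=\{\calT_N:N\in\calE\}$ (in particular $\calS_{M'}\subsetneq\calT_{M'}$), so $A''\refines A$. On the other hand, $\calT_{M'}\in A$ is not bounded above by any element of $A''$: it is strictly larger than $\calS_{M'}$, and if it were contained in some $\calT_N$ with $N\neq M'$ then $\calT_{M'}$ would be redundant in the irredundant $A$, a contradiction. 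Hence $A''$ strictly refines $A$, contradicting the canonicity of $A$.

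The main obstacle is the identity $\calT_M\vee\calT_{M'}=\calT_M\vee\calS_{M'}$, which hinges on combining the brick/minimal-extending-module structure of $M'$ (via property~\ref{propertyone}) with the extension-closure description of joins in $\tors\Lambda$. The remaining manipulation with the refinement order is essentially formal, though extracting an irredundant sub-representation from $A'$ requires mild care if $\calE$ is infinite.
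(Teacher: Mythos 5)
Your proof is correct, and it takes a slightly different route from the paper's. The paper first invokes Proposition~\ref{complex} to reduce to the case $|\calE|=2$, and then, given a nonzero $f\colon M'\to M$ with image $N$, replaces $\filt(\Gen M)$ by the \emph{explicit} smaller join-irreducible $\filt(\Gen(M/N))$; the extension $0\to N\to M\to M/N\to 0$ shows the join is unchanged. You instead work with all of $\calE$ at once and replace $\calT_{M'}$ by its unique lower cover $\calS_{M'}$, whose existence you import from Proposition~\ref{ji}, and then use property~\ref{propertyone} of the minimal extending module $M'$ for $\calS_{M'}$ to place $M'/\image f$ in $\calS_{M'}$. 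Since $\filt(\Gen(M'/\image f))\subseteq\calS_{M'}$, your replacement is the largest one that works, while the paper's is the smallest; both yield the same contradiction with canonicity. Your route is a bit more abstract and leans on the completely-join-irreducible machinery, whereas the paper's is more hands-on and self-contained.

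One small cleanup: the ``mild care'' you flag at the end is not actually needed, because $A'=\{\calS_{M'}\}\cup\{\calT_N:N\in\calE,\ N\ne M'\}$ is already irredundant. Indeed, any proper subset of $A'$ omits either $\calS_{M'}$ (in which case its join lies in $\Join(A\setminus\{\calT_{M'}\})\subsetneq\calT$) or some $\calT_P$ with $P\ne M'$ (in which case its join lies in $\Join(A\setminus\{\calT_P\})\subsetneq\calT$), both by irredundancy of $A$; and antichain-ness of $A'$ follows from antichain-ness of $A$ together with $\calS_{M'}\subsetneq\calT_{M'}$. Alternatively, you could simply open, as the paper does, by citing Proposition~\ref{complex} to reduce to $|\calE|=2$, which makes the irredundancy of the two-element replacement immediate and avoids any discussion of infinite subfamilies. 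Also note that Proposition~\ref{ji} as stated gives complete join-irreducibility of $\calT_{M'}$; the fact that $M'$ is a minimal extending module for the lower cover $\calS_{M'}$ is established in its proof (via Propositions~\ref{if part} and~\ref{prop:torsion-free-to-torsion-relation}), so it would be cleaner to cite those directly.
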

\begin{proof}
By Proposition~\ref{complex}, it is enough to show that the statement holds when $\calE$ contains two elements, say $M$ and $M'$.
We write $\calT$ for $\filt(\Gen M) \join \filt(\Gen M')$.

Suppose that $f:M'\to M$ is a nonzero homomorphism, and write $N$ for the quotient $M'/\ker f$.
Since $\filt(\Gen M) \join \filt(\Gen M')$ is irredundant, it follows that $N$ is not isomorphic to $M$.
Hence, $M/N$ is a proper factor of $M$, and the torsion class $\filt(\Gen(M/N))$ is strictly contained in $\filt(\Gen M)$.
(Indeed, if $M\in \filt(\Gen(M/N))$, then there exists a submodule $Y\subseteq M$ that is also a proper factor of $M$.
That is a contradiction to the fact that $M$ is a brick.)

Finally, we observe that $\filt(\Gen M/N) \join \filt(\Gen M')$ is an irredundant join representation for $\calT$, and because $\filt(\Gen M/N) \subseteq \filt(\Gen M)$,
$$\{\filt(\Gen M/N), \filt(\Gen M')\} \refines \{\filt(\Gen M), \filt(\Gen M')\}.$$ 
That is a contradiction to the fact that $\filt(\Gen M)\join \filt(\Gen M')$ is the canonical join representation for $\calT$.
\end{proof}

Next, we consider the reverse implication of Theorem~\ref{main: cjr}.
As above, let $\calE$ be a collection of hom-orthogonal bricks.
We write $\calT$ for the torsion class $\Join \{\filt(\Gen M):\, M\in
\calE\}$, and  argue that $\Join \{\filt(\Gen M):\, M\in \calE\}$ is the
canonical join representation of $\calT$.
This will require the following lemma. 
For context, recall that a torsion class need not have any upper or lower cover relations.
(For example, the torsion class $\calI_\infty$ in the Kronecker quiver
from Example~\ref{kronecker} does not cover any other torsion class.)

\begin{lemma}\label{lem:join-has-lower-faces}
Suppose that $\calE$ is a collection of hom-orthogonal bricks over
$\Lambda$ and let $\calT = \Join \{\filt(\Gen M):\,M\in \calE\}$.
Then for each module $M\in \calE$:
\begin{enumerate}
\item  there is a torsion class $\calS\covered \calT$ such that $\calT = \filt(\calS \cup M)$;
 \item $M$ is a minimal extending module for $\calS$; and
 \item  $\calS$ contains $\calE \setminus \{M\}$.
 \end{enumerate}
\end{lemma}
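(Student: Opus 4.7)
The plan is to define $\calS$ explicitly as $\calS := \{Y \in \calT : \Hom(Y, M) = 0\}$, verify that $M$ is a minimal extending module for $\calS$, and identify $\filt(\calS \cup \{M\})$ with $\calT$. First I would observe that $\calS$ is a torsion class properly contained in $\calT$: closure under factors is immediate, closure under extensions follows from the long exact sequence of $\Hom(-, M)$, and $M \notin \calS$ since $\Hom(M, M) \ne 0$. For $N \in \calE \setminus \{M\}$, any $Q \in \Gen N$ is a factor of some $N^r$, and any morphism $Q \to M$ pulls back along $N^r \twoheadrightarrow Q$ to an element of $\Hom(N, M)^r$, which is zero by hom-orthogonality; thus $\Gen N \subseteq \calS$, and in particular $\calE \setminus \{M\} \subseteq \calS$, which will give part~(3).

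To establish that $M$ is a minimal extending module for $\calS$, property~\ref{propertythree} is immediate from the definition. For property~\ref{propertyone}, I would argue that if $M'$ is a proper factor of $M$, then any nonzero $\phi\colon M' \to M$ composed with $M \twoheadrightarrow M'$ yields a nonzero endomorphism of the brick $M$, which is therefore an isomorphism; but this forces $M \twoheadrightarrow M'$ itself to be an isomorphism, contradicting that $M'$ is a proper factor. Hence $\Hom(M', M) = 0$ and $M' \in \calS$.

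The main obstacle will be property~\ref{propertytwo}. Given a non-split sequence $0 \to M \xrightarrow{\iota} X \xrightarrow{\pi} T \to 0$ with $T \in \calS$, I would first note $X \in \calT$ by extension-closure, so it suffices to show $\Hom(X, M) = 0$. Given $\phi\colon X \to M$, if $\phi \circ \iota \ne 0$ then the brick property makes it invertible, and $(\phi \circ \iota)^{-1} \circ \phi$ would be a retraction of $\iota$ splitting the sequence --- a contradiction. Hence $\phi \circ \iota = 0$, so $\phi$ factors through $\pi$ as $\phi' \circ \pi$ for some $\phi'\colon T \to M$; and $\Hom(T, M) = 0$ then forces $\phi = 0$.

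The three verified properties establish part~(2). For part~(1), Proposition~\ref{thm:pseudo-minimal-covers} yields $\filt(\calS \cup \{M\}) \covers \calS$, and it remains to check $\filt(\calS \cup \{M\}) = \calT$. Since $\filt(\calS \cup \{M\})$ is a torsion class containing $M$, it contains $\Gen M$; it also contains each $\Gen N$ for $N \in \calE \setminus \{M\}$, as $\Gen N \subseteq \calS$. Hence it contains $\calT = \filt\bigl(\bigcup_{N \in \calE} \Gen N\bigr)$, and the reverse inclusion is clear.
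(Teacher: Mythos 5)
Your proof is correct, and it takes a genuinely different route from the paper's. The paper works through the torsion-free dual: it verifies that $M$ is a minimal co-extending module for $\calT^\perp$ by analyzing filtrations of modules in $\calT$ (each such module has a filtration whose subquotients are factors of modules in $\calE$), then invokes Proposition~\ref{prop:torsion-free-to-torsion-relation} to produce $\calS$ and establish that $M$ is a minimal extending module for it, and finally uses Proposition~\ref{prop: epis and covers} for part~(3). You instead write down $\calS = \{Y \in \calT : \Hom(Y, M) = 0\}$ explicitly, check that it is a torsion class (intersection of $\calT$ with the left-perp class of $M$, via left-exactness of $\Hom(-,M)$), and verify~\ref{propertyone}--\ref{propertythree} directly; the brick property of $M$ does the real work in~\ref{propertyone} and~\ref{propertytwo}, and hom-orthogonality of $\calE$ gives $\Gen N \subseteq \calS$ for $N \ne M$, handling part~(3) at the same time. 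Your approach is more self-contained and, as a bonus, gives an explicit description of $\calS$ that the paper leaves implicit; the paper's approach has the advantage of reusing the duality and the cover-relation machinery it has already built up. Both are sound.
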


\begin{proof}
The torsion class $\calT=\Join \{\filt(\Gen(M)):\, M\in \calE\}$ is the smallest torsion class in $\tors \Lambda$ that contains the modules in $\calE$.
Thus, a module $N$ belongs to $\calT$ if and only if $N$ admits a filtration $N=N_l \supsetneq  \dotsc \supsetneq N_0=0$ such that each $N_i/N_{i-1}$ is a factor of some indecomposable module in $\calE$.
For each $M\in \calE$, we claim that $M$ is a minimal co-extending module for the torsion-free class $\calT^\perp$.

First we check Property~\ref{copropertyone}.
Suppose that $Y$ is an indecomposable proper submodule of $M$ and that $Y\notin \calT^\perp$.
So, there exists a module $N\in \calT$ and a nonzero homorphism $f: N\rightarrow Y$.
We may assume, without loss of generality, that $f$ is injective (if not, we take the map $f:N/\ker f\rightarrow Y$, noting that $N/\ker f$ is in $\calT$ by closure under factors).
From the filtration of $N$ described above, observe that the submodule $N_1$ is a factor of $M'$, for some $M'\in \calE$.
Also $f(N_1)$ is a non-trivial submodule of $Y$.
So, we have the following sequence of homomorphisms \[ M' \onto N_1 \onto f(N_1) \subseteq Y\subseteq M.\]
The composition of these homomorphisms is non-zero, contradicting our hypothesis that $\dim \Hom_\Lambda (M', M) = 0$.
Therefore, $Y\in \calT^\perp$.
A similar argument shows that Property~\ref{copropertythree} also holds.

To verify Property~\ref{copropertytwo}, suppose that $0 \rightarrow F \xrightarrow{i} X \xrightarrow{\pi} M_1 \rightarrow 0$ is a non-split exact sequence with $F\in \calT^\perp$, and that $X\notin \calT^\perp$.
As above, let $f: N\rightarrow X$ be a non-zero homomorphism, where $N\in \calT$ indecomposable.
We may again assume that $f$ is injective, and in particular the restriction of the map to $N_1$ is injective.
As above, $N_1$ is a factor of some module $M'\in\calE$.
Thus, we have the nonzero composite homomorphism:
\[\tilde{f}: M' \onto N_1 \onto f(N_1) \subseteq X.\]
Since $\dim \Hom_{\Lambda}(M', M) =0$ the composition $\pi\circ \tilde{f}$ is zero.
Therefore, $\tilde{f}$ factors through the module $F$, and we have a nonzero homomorphism from $M'$ to $F$.
Since $F\in \calT^\perp$, that is a contradiction.
We conclude that $X\in \calT^\perp$, and $M$ a minimal co-extending module for~$\calT^\perp$.
Proposition~\ref{prop:torsion-free-to-torsion-relation} implies that there exists a torsion class $\calS\covered \calT$ such that $\calT= \filt(\calS \cup \{M\})$ and $M$ a minimal extending module for $\calS$.

To prove the third statement, suppose that $M'\in \calE\setminus \{M\}$ does not belong to~$\calS$.
Proposition~\ref{prop: epis and covers} implies that $M$ is a factor of $M'$, and that is a contradiction.
The statement follows.
\end{proof}

The following proposition completes our proof of Theorem~\ref{main: cjr}.
\begin{proposition}\label{cjr end}
Suppose that $\calE$ is a collection of hom-orthogonal $\Lambda$ brick modules and write $\calT$ for the torsion class $\Join \{\filt( \Gen M):\,M\in \calE\}$.
Then the expression $\Join \{\filt( \Gen M):\,M\in \calE\}$ is the canonical join representation of $\calT$.
\end{proposition}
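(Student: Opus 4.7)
My plan is to verify the two defining properties of a canonical join representation: that $\Join\{\filt(\Gen M):M\in\calE\}$ is irredundant, and that it refines every other irredundant join representation of $\calT$. The main tool will be Lemma~\ref{lem:join-has-lower-faces}, which for each $M\in\calE$ produces a torsion class $\calS_M\covered\calT$ satisfying $\calT=\filt(\calS_M\cup\{M\})$, with $M$ a minimal extending module for $\calS_M$ and $\calE\setminus\{M\}\subseteq\calS_M$.

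To establish irredundancy, I would take any proper subset $A'\subsetneq\{\filt(\Gen M):M\in\calE\}$ and pick $M_0\in\calE$ with $\filt(\Gen M_0)\notin A'$. Every $\filt(\Gen M')\in A'$ then has $M'\in\calE\setminus\{M_0\}\subseteq\calS_{M_0}$, so $\filt(\Gen M')\subseteq\calS_{M_0}$, and taking joins yields $\Join A'\subseteq\calS_{M_0}\subsetneq\calT$, as required. This argument will work uniformly regardless of whether $\calE$ is finite or infinite.

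For the refinement condition, I would let $\calT=\Join B$ be any irredundant join representation and fix $M\in\calE$. Since $\Join B=\calT\not\subseteq\calS_M$, some $\calU\in B$ must fail to lie in $\calS_M$, so I may pick $X\in\calU\setminus\calS_M$. Then $X\in\calT\setminus\calS_M$, so Proposition~\ref{prop: epis and covers}, applied to the cover $\calS_M\covered\calT$ with minimal extending module $M$, forces $M$ to be a factor of $X$. Because $\calU$ is a torsion class, $M\in\calU$ and hence $\filt(\Gen M)\subseteq\calU$. This exhibits the required upper bound for $\filt(\Gen M)$ in $B$, so $\{\filt(\Gen M):M\in\calE\}\refines B$.

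The whole argument hinges on the existence and the structural features of the covers $\calS_M$ provided by Lemma~\ref{lem:join-has-lower-faces}. Statement~(3) of that lemma—that $\calE\setminus\{M\}\subseteq\calS_M$—is what will secure irredundancy in the possibly-infinite case, while statement~(2)—that $M$ is a minimal extending module for $\calS_M$—is what will license the invocation of Proposition~\ref{prop: epis and covers} to recognize $M$ as a factor of any module in $\calT\setminus\calS_M$. Once those covers are in hand, no further representation-theoretic work is needed and the remaining steps are essentially lattice-theoretic bookkeeping.
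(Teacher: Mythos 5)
Your proof is correct and follows essentially the same strategy as the paper's: both rely on Lemma~\ref{lem:join-has-lower-faces} to produce, for each $M\in\calE$, a cover $\calS_M\covered\calT$ containing $\calE\setminus\{M\}$, then use statement (3) for irredundancy and Proposition~\ref{prop: epis and covers} (via statement (2)) to show that any other irredundant join representation is refined by $\{\filt(\Gen M):M\in\calE\}$. The only cosmetic difference is that you argue irredundancy directly for an arbitrary proper subset $A'$, while the paper checks it by deleting one element at a time; these are logically equivalent.
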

\begin{proof}
We assume that $\calE$ has at least two elements (otherwise the statement follows from Theorem~\ref{schur modules to join-irreducibles}).
First we show that $\Join \{\filt(\Gen M):\,M\in \calE\}$ is irredundant.
Let $M\in \calE$, and consider the torsion class $\calT'= \Join \{\filt(\Gen M'):\,M'\in \calE\setminus \{M\}\}$.
By Lemma~\ref{lem:join-has-lower-faces}, there exists $\calS\covered \calT$ such that each module in $\calE\setminus \{M\}$ lies in $\calS$.
Thus $\calT' \le \calS \covered \calT$.

Next, we show the expression is $\Join \{\filt(\Gen M):\,M\in \calE\}$ is the unique lowest join representation for $\calT$.
Suppose that $\Join A$ is another irredundant join representation.
We claim that for each torsion class $\filt(\Gen M)$ such that $M\in \calE$, there exists some $\calG\in A$ such that $\filt(\Gen M) \le \calG$.
(Thus, $\{\filt(\Gen M):\,M\in \calE\}$ is ``lower'' than $A$.)
Fix a module $M\in \calE$, let $\calS$ be the torsion class covered by $\calT$ satisfying $\calT = \filt(\calS \cup \{M\})$ and $M$ is a minimal extending module for $\calS$.
(Such a torsion class $\calS$ exists by Lemma~\ref{lem:join-has-lower-faces}.)
Observe that there exists $\calG\in A$ such that $\calG\not \subseteq \calS$.
Indeed, if each $\calG$ is contained in $\calS$, then $\Join A = \calS$.
Let $N\in \calG \setminus \calS$.
Proposition~\ref{prop: epis and covers} implies that $M$ is a factor of $N$, hence $M\in \calG$.
We conclude that $\filt(\Gen M) \le \calG$.
We have proved the claim and the proposition.
\end{proof}

\begin{remark}\label{finite case}
\normalfont
When $\tors \Lambda$ is finite (equivalently, when $\Lambda$ is $\tau$-rigid finite) each join-irreducible torsion class is completely join-irreducible.
Thus, the canonical join complex $\Gamma(\tors \Lambda)$ is isomorphic to the complex of hom-orthogonal bricks.
Moreover, the proof Proposition~\ref{cjr end} implies that the number of canonical joinands of $\calT\in \tors \Lambda$ is equal the number of torsion classes $\calS$ covered by $\calT$.
More precisely:

\begin{corollary}\label{cor: torsion labeling}
Suppose that $\calT$ is a torsion class over $\Lambda$ with the
following property: for every torsion class $\calS$ with
$\calS<\calT$, there is a torsion class $\calT'$ such that $\calS \leq
\calT' \lessdot \calT$. 
Then the canonical join representation of $\calT$ is equal to \[\Join \{\filt(\Gen(M)):\,\text{$M$ is a minimal co-extending module of $\calT^\perp$}\}.\]
In particular, if $\Lambda$ is $\tau$-rigid finite, each torsion class has a canonical join representation.
\end{corollary}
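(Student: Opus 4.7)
The plan is to leverage the perp duality $(-)^\perp: \tors \Lambda \to \torf \Lambda$ to translate the minimal co-extending modules of $\calT^\perp$ into the minimal extending modules that witness the lower covers of $\calT$, and then invoke Proposition~\ref{cjr end} to conclude the canonical join representation.

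First I would observe, via Theorem~\ref{prop:min-ext-module-free} applied to the torsion-free class $\calT^\perp$ together with the anti-isomorphism between $\tors\Lambda$ and $\torf\Lambda$, that the isoclasses of minimal co-extending modules $M$ of $\calT^\perp$ are in bijection with the lower covers $\calS \covered \calT$. Concretely, for each such $M$, Proposition~\ref{prop:torsion-free-to-torsion-relation} (in its reverse form) furnishes a unique $\calS \covered \calT$ for which $M$ is a minimal extending module and $\calT = \filt(\calS \cup \{M\})$.

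Second, I would show that $\calT$ equals the join $\calU := \Join\{\filt(\Gen M): M \text{ is a minimal co-extending module of } \calT^\perp\}$. Each such $M$ lies in $\calT$ (as it arises from a lower cover), so $\calU \subseteq \calT$. For the reverse inclusion, if $\calU \subsetneq \calT$, the hypothesis supplies a torsion class $\calT' \covered \calT$ with $\calU \subseteq \calT'$; but the minimal extending module $M'$ for $\calT'$ is one of our $M$'s, so $M' \in \filt(\Gen M') \subseteq \calU \subseteq \calT'$, contradicting $M' \notin \calT'$.

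Third, I would verify that the collection $\calE := \{M : M \text{ is a minimal co-extending module of } \calT^\perp\}$ consists of pairwise hom-orthogonal bricks. Each $M \in \calE$ is a brick by Theorem~\ref{main: cor schur}. If $M_1, M_2 \in \calE$ are distinct, they witness distinct lower covers $\calS_1, \calS_2$ of $\calT$ with $\filt(\calS_1 \cup \{M_1\}) = \calT = \filt(\calS_2 \cup \{M_2\})$, so Corollary~\ref{hom orthogonality} gives hom-orthogonality. Proposition~\ref{cjr end} then immediately yields that $\Join\{\filt(\Gen M) : M \in \calE\}$ is the canonical join representation of $\calT$, proving the first assertion. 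For the second assertion, when $\Lambda$ is $\tau$-rigid finite, $\tors \Lambda$ is finite, and the hypothesis holds vacuously for every $\calT$ since any strictly smaller element can be enlarged to a lower cover of $\calT$.

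The main obstacle is conceptual rather than technical: one must keep straight the direction of the perp duality and correctly identify that the minimal co-extending modules of $\calT^\perp$ parametrize lower covers of $\calT$. All the hard work has been done in Proposition~\ref{cjr end} and Corollary~\ref{hom orthogonality}; the present corollary is essentially a matter of organizing these results together with the covering hypothesis.
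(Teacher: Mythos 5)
Your proof is correct and follows essentially the same approach as the paper, which compresses the argument into a one-line citation of Corollary~\ref{hom orthogonality} while relying implicitly on the surrounding discussion (notably the proof of Proposition~\ref{cjr end} and the duality in Proposition~\ref{prop:torsion-free-to-torsion-relation}). You have simply spelled out the details the paper leaves implicit: the bijection between minimal co-extending modules of $\calT^\perp$ and lower covers of $\calT$, the role of the covering hypothesis in showing $\calT$ equals the join, the hom-orthogonality via Corollary~\ref{hom orthogonality}, and the final appeal to Proposition~\ref{cjr end}.
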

\begin{proof}
The statement follows immediately form Corollary~\ref{hom orthogonality}.

\end{proof}

Indeed, the canonical join representation ``sees'' the geometry of the Hasse diagram for \textit{any} finite lattice.
We summarize this useful fact below (see \cite[Proposition~2.2]{flag}).
\begin{proposition}\label{prop: labeling}
Suppose that $L$ is a finite lattice, and for each element $w\in L$ the canonical join representation of $w$ exists.
Then, for each $w\in L$, the number of canonical joinands of $w$ is equal to the number of elements covered by $w$.
\end{proposition}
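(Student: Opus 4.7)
The plan is to exhibit a bijection between the set $A$ of canonical joinands of $w$ (where $w = \Join A$ is the canonical join representation guaranteed by hypothesis) and the set of lower covers of $w$. Define $\phi\colon A \to L$ by $\phi(a) := \Join(A \setminus \{a\})$. Irredundancy of $A$ gives $\phi(a) < w$, while Proposition~\ref{complex} tells us that $A \setminus \{a\}$ is itself the canonical join representation of $\phi(a)$.

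The central step is to show $\phi(a) \covered w$. I would argue this by contradiction: suppose some $u$ satisfies $\phi(a) < u < w$ and take its canonical join representation $u = \Join B$, which exists by hypothesis. Since $\phi(a) \vee a = w$ and $\phi(a) \leq u$, we get $u \vee a = w$, so $B \cup \{a\}$ is a (possibly redundant) join representation of $w$. After trimming to an irredundant antichain $A'$, the plan is to verify $A' \refines A$ and that this refinement is strict, contradicting the $\refines$-minimality of the canonical join representation $A$. The strictness is forced by the hypothesis $\phi(a) < u$: some element of $B$ must witness a drop that cannot be absorbed into $A \setminus \{a\}$, since otherwise $\Join B \leq \Join(A \setminus \{a\}) = \phi(a)$.

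Injectivity of $\phi$ is easy: if $\phi(a) = \phi(a')$ with $a \neq a'$, then $a' \in A \setminus \{a\}$ gives $a' \leq \phi(a) = \phi(a')$, contradicting the irredundancy of $A \setminus \{a'\}$, which forces $a' \not\leq \phi(a')$. For surjectivity, given a lower cover $u$ of $w$, pick any $a \in A$ with $a \not\leq u$ (such an $a$ exists because $w \not\leq u$); the cover property forces $u \vee a = w$. A parallel refinement argument — comparing the canonical join representations of $u$ and of $\phi(a)$, both lower covers of $w$ with the same ``complementary'' joinand $a$ — forces $\phi(a) = u$.

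The main obstacle is the refinement argument in the central step. This is precisely where the global hypothesis that canonical join representations exist for every element is essential, since without a canonical join representation of the hypothetical intermediate $u$ there is no mechanism for producing the strict $\refines$-refinement of $A$; everything else reduces to routine bookkeeping with irredundancy and the partial order $\refines$ on irredundant join representations.
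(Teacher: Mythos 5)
Your proposed map $\phi(a) = \Join(A\setminus\{a\})$ does not in general take values in the set of lower covers of $w$, so the plan fails at the outset. The pentagon $N_5$ of Figure~\ref{pentagon} already witnesses this: with $\0 < y < z < \1$ and $\0 < x < \1$ as in Example~\ref{Tamari example}, the canonical join representation of $\1$ is $A=\{x,y\}$, while the lower covers of $\1$ are $x$ and $z$. Then $\phi(x)=\Join\{y\}=y$, which is \emph{not} covered by $\1$ (we have $y<z<\1$). Your ``central step'' cannot be repaired as written, because you ask to produce an irredundant join representation $A'$ of $w$ with $A'\refines A$ strictly; but the canonical join representation is by definition the $\refines$-\emph{lowest} irredundant join representation of $w$, so every such $A'$ satisfies $A\refines A'$, never the reverse. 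Concretely, in the $N_5$ example the intermediate element is $u=z$, $B=\{z\}$, and the trimmed set is $A'=\{x,z\}$, which does not refine $\{x,y\}$. The surjectivity argument inherits the same defect, since it quietly presumes $\phi(a)$ is a lower cover.

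The bijection should be built in the other direction: send each lower cover $u\covered w$ to the unique $a\in A$ with $a\not\le u$. Existence is clear (otherwise $w=\Join A\le u$). For uniqueness, suppose $a\neq a'$ in $A$ both fail to lie below $u$; take the canonical join representation $C$ of $u$ and trim $C\cup\{a\}$ to an irredundant join representation $A'$ of $w$ (note $\Join(C\cup\{a\})=u\vee a=w$ since $u\covered w$). Then $A\refines A'$, and since $A$ is an antichain, each element of $A\setminus\{a\}$ lies below some element of $C$, hence below $u$, contradicting $a'\not\le u$. Surjectivity: for $a\in A$, irredundancy gives $\Join(A\setminus\{a\})<w$, so in the finite lattice choose a lower cover $u$ of $w$ above $\Join(A\setminus\{a\})$; then $a\not\le u$ while $A\setminus\{a\}$ lies below $u$. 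Injectivity: if distinct lower covers $u_1\neq u_2$ both mapped to $a$, then $u_1\vee u_2=w$, so $\{u_1,u_2\}$ is an irredundant join representation of $w$, and $A\refines\{u_1,u_2\}$ forces $a\le u_1$ or $a\le u_2$, a contradiction. Observe that this proof repeatedly uses $A\refines A'$ for \emph{every} irredundant join representation $A'$ of $w$ — precisely the inequality your attempted contradiction points the wrong way.
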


\end{remark}
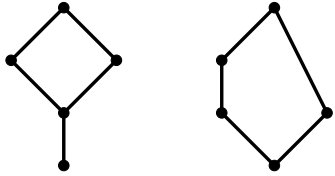
\begin{figure}[h]
\centering
\scalebox{.7}{
\begin{tikzpicture}
\draw[fill] (-2,0) circle [radius=.1];
\draw[fill] (-2,1) circle [radius=.1];
\draw[fill] (-3,2) circle [radius=.1];
\draw[fill] (-1,2) circle [radius=.1];
\draw[fill] (-2,3) circle [radius=.1];

\draw [black, line width=.65 mm] (-2,0) to (-2,1);
\draw[black, line width=.65 mm] (-2,1) to (-3,2);
\draw [black, line width=.65 mm] (-2,1) to (-1,2);
\draw [black, line width=.65 mm] (-2,3) to (-3,2);
\draw[black, line width=.65 mm] (-2,3) to (-1,2);
\draw[fill] (2,0) circle [radius=.1];
\draw[fill] (1,1) circle [radius=.1];
\draw[fill] (3,1) circle [radius=.1];
\draw[fill] (1,2) circle [radius=.1];
\draw[fill] (2,3) circle [radius=.1];

\draw [black, line width=.65 mm] (2,0) to (1,1);
\draw [black, line width=.65 mm] (2,0) to (3,1);
\draw [black, line width=.65 mm] (1,1) to (1,2);
\draw [black, line width=.65 mm] (2,3) to (1,2);
\draw [black, line width=.65 mm] (2,3) to (3,1);
\end{tikzpicture}}
\caption{Two nonisomorphic lattices with isomorphic canonical join complexes.}
\label{fig: noniso}
\end{figure}

\begin{remark}\label{isomorphism}
\normalfont
It is natural to ask if the canonical join complex $\Gamma(\tors \Lambda)$ characterizes the underlying algebra $\Lambda$ or the torsion theory of $\Lambda$.
It is easy to see that non-isomorphic algebras may have the same torsion theory.
We explore such an example in the following section when we show that the algebra $RA_n$ (an algebra of finite representation type for all $n$) has the same torsion theory as the preprojective algebra $\Pi A_n$ (which is \textit{not} finite representation type for $n\ge 4$).

Furthermore, nonisomorphic lattices $L$ and $L'$ may have isomorphic canonical join complexes.
For example, consider the two (nonisomorphic) lattices shown in Figure~\ref{fig: noniso}.
It is an easy exercise to check that canonical join complex of both lattices consists of an edge and an isolated vertex.
(See Example~\ref{Tamari example}.)
\end{remark}

\section{$\tors RA_n$ and the weak order on $A_n$}
\label{sec:example:-shards}

Mizuno showed (in \cite[Theorem~2.3]{Miz}) that the lattice of torsion classes for the preprojective algebra of Dynkin type $W$ is isomorphic to the weak order on the associated Weyl group, when $W$ is simply laced.
In the last section, we construct a different algebra, which we
refer to as $RA_n$, and show that $\tors RA_n$ is isomorphic to the weak
order on $A_n$. 
This is carried out in two steps:
First, in Theorem~\ref{thm: cjc isomorphism}, we show that the canonical join complex of $\tors RA_n$ is isomorphic to the canonical join complex of the weak order on $A_n$.
Second, in Proposition~\ref{inversion and cover}, we map each cover relation $\calT\covered \calT'$ in $\tors RA_n$ bijectively to a cover relation of permutations in weak order.
Before we begin, we establish some useful notation.
Throughout we write $[n]$ for the set $\{1,2,\ldots, n\}$, $[i,k]$ for
the set $\{i, i+1, \ldots, k\}$ and $(i,k)$ for $\{i+1, \ldots,
k-1\}$.

\subsection{The algebra $RA_n$}\label{RA_n background}
Let $Q$ be the quiver with vertex set $Q_0=[n]$ and arrows $Q_1=\{a_i:
i\rightarrow i+1, a_i^*: i+1\rightarrow i\}_{i\in [n-1]}$. Define $I$ to be the two-sided ideal in the
  path algebra $\kk
  Q$ generated by all two-cycles, $I=\langle a_ia_i^*, a_i^*a_i \mid
  i\in [n-1]\rangle$, and define $RA_n$ to be the algebra $\kk Q/I$.

Recall that a representation of $(Q,I)$ is a collection of vector spaces $M(x)$, one for each
vertex $i\in Q_0$, and linear maps $M(a_i): M(i)\rightarrow M(i+1)$,
$M(a_i^*):M(i+1)\rightarrow M(i)$, for each of the arrows in $Q$, that satisfy the
relations given by $I$. We will make generous use of the equivalence between the category of
modules over $RA_n$ and that of the representations of the bound
quiver $(Q, I)$, generally referring to the objects of interest as
modules, while describing them as representations.
 The \newword{support} of a representation is the set of vertices $i$ for which $M(i)\neq 0$.

\begin{proposition}
There are finitely many isoclasses of  indecomposable representations
of $RA_n$ for each $n$.
\end{proposition}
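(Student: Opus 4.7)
The plan is to identify $RA_n$ as a gentle algebra (a special case of a string algebra) and then invoke the Butler--Ringel classification of indecomposable modules for such algebras, combined with the observation that the combinatorics of $(Q, I)$ forbids the existence of any band modules. Finite representation type then follows because only finitely many strings can be formed inside the finite quiver $Q$.

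First, I would verify the gentle presentation conditions for $(Q, I)$: (a) each vertex of $Q$ has at most two incoming and at most two outgoing arrows; (b) the ideal $I$ is generated by paths of length exactly two; and (c) for each arrow $\alpha$, at most one arrow $\beta$ satisfies $\beta\alpha \in I$ and at most one satisfies $\beta\alpha \notin I$ (with the dual statement for compositions of the form $\alpha\gamma$). These are immediate from the definitions: every interior vertex $i$ has incoming arrows $\{a_{i-1}, a_i^*\}$ and outgoing arrows $\{a_i, a_{i-1}^*\}$, while the endpoints $1$ and $n$ have one of each; the two-cycle relations $a_i a_i^* = 0 = a_i^* a_i$ pair up the arrows precisely as required.

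Second, I would invoke the Butler--Ringel theorem, which says that every indecomposable module over a string algebra is isomorphic either to a string module $M(w)$, indexed by a string (a reduced walk in $Q$ avoiding the forbidden subwords coming from $I$), or to a band module, indexed by a primitive cyclic string together with additional scalar data from $k[t, t^{-1}]$.

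Third, I would argue that no bands exist in $RA_n$. The underlying graph of $Q$ is the tree $1 - 2 - \cdots - n$ (with doubled edges), so any nontrivial closed walk must traverse at least one edge $\{i, i+1\}$ in both directions. The four possible ways to cross that edge and then return are (i) $a_i$ followed by its formal inverse $a_i^{-1}$, (ii) $(a_i^*)^{-1}$ followed by $a_i^*$, (iii) $a_i$ followed by $a_i^*$, or (iv) $(a_i^*)^{-1}$ followed by $a_i^{-1}$. Cases (i) and (ii) are backtracks, forbidden in any string; cases (iii) and (iv) form (the inverse of) a zero-relation $a_i^* a_i$ or $a_i a_i^*$, also forbidden. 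Consequently no primitive cyclic string exists, so there are no band modules.

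Finally, since the set of strings is a subset of reduced walks in the finite quiver $Q$ with no cyclic closures allowed, every string has length bounded in terms of $n$, and there are only finitely many of them. Combining this with the absence of bands and Butler--Ringel yields the proposition. The main obstacle is the combinatorial check in the third step, and the careful handling of the string-algebra conventions for which subwords are forbidden; given the simple linear structure of $Q$, this reduces to the small case analysis outlined above.
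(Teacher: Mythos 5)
Your argument matches the paper's proof exactly: both identify $RA_n$ as a gentle algebra, invoke Butler--Ringel, and rule out band modules because any closed walk in $Q$ forces either a backtrack or one of the two-cycle relations $a_ia_i^*, a_i^*a_i\in I$. The only point worth tightening is the claim that the four listed ways of ``crossing and then returning'' exhaust the cases --- the two traversals of the edge $\{i,i+1\}$ need not be consecutive in the walk; picking an extremal (say, rightmost) vertex visited by the closed walk forces consecutive entry and exit along the same edge, which is what makes the backtrack/relation dichotomy apply.
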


\begin{proof}
The algebra $RA_n$ is gentle (see \cite{AS}) with no band modules since any cycle contains a 2-cycle,
each of which lies in $I$. By the work of Butler-Ringel \cite{Butler-Ringel},
then, there are finitely many isoclassses of indecomposable modules.
\end{proof}

As a quiver representation, each indecomposable module $M$ (up to
isomorphism) over $RA_n$ corresponds to a connected subquiver $Q_M$ of
$Q$ satisfying the condition that at most one of either $a_i$ or $a_i^*$ belongs to $(Q_M)_1$.
Thus, each indecomposable module can be identified with an orientation of a type-A Dynkin diagram with rank less than or equal to $n$.
More precisely, the quiver representation corresponding to $Q_M$ satisfies:
\begin{itemize}
\item $M(i)=k$ for all $i\in (Q_M)_0$ and $M(i)=0$ for $i\notin (Q_M)_0$;
\item $M(a)\neq 0$ if and only if $a\in (Q_M)_1$.
\end{itemize}



\begin{proposition}\label{cor: RA_n schur}
Each indecomposable module over $RA_n$ is a brick.
In particular, the canonical join complex of $\tors RA_n$ is isomorphic to the complex of hom-orthogonal indecomposable modules over $RA_n$.
\end{proposition}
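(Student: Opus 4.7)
The plan is to directly compute the endomorphism ring of an arbitrary indecomposable $M$, using the explicit description of indecomposables as connected subquivers $Q_M$ with $M(i)=k$ at each vertex of the support.

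Specifically, since every vertex space $M(i)$ is one-dimensional, any endomorphism $f\colon M\to M$ is determined by a tuple of scalars $(\lambda_i)_{i\in (Q_M)_0}$, where $f_i\colon M(i)\to M(i)$ is multiplication by $\lambda_i$. The commutativity condition $f_{j+1}\circ M(a)=M(a)\circ f_j$ for every arrow $a\in (Q_M)_1$ (either $a_j$ or $a_j^*$) reduces to $\lambda_{j+1}M(a)=\lambda_j M(a)$; since $M(a)$ is a nonzero map of one-dimensional spaces, this forces $\lambda_j=\lambda_{j+1}$. The underlying graph of $Q_M$ is connected by definition, so iterating this along $Q_M$ gives $\lambda_i=\lambda_j$ for all $i,j\in (Q_M)_0$. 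Hence $\End(M)\cong k$, which is a division ring, so $M$ is a brick.

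For the second statement, I will combine this with Theorem~\ref{main: cjr}. That theorem requires $RA_n$ to be $\tau$-rigid finite; this follows because $RA_n$ has finite representation type (already established in the preceding proposition via the Butler--Ringel classification), and finite representation type forces $\tors RA_n=\ftors RA_n$ to be finite, since every torsion class is determined by its indecomposable objects, of which there are only finitely many. Invoking Theorem~\ref{main: cjr}, $\Gamma(\tors RA_n)$ is isomorphic to the complex of hom-orthogonal bricks over $RA_n$. Since we have just shown that every indecomposable is a brick, the complex of hom-orthogonal bricks coincides with the complex of hom-orthogonal indecomposable modules, giving the claim.

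The only mild obstacle is verifying that every arrow of $Q_M$ acts as a nonzero (hence invertible) map on the one-dimensional spaces it connects; this is built into the definition of the representation associated to the subquiver $Q_M$, so the argument is essentially immediate once one unpacks that bookkeeping. No deeper input about gentle algebras or $\tau$-tilting theory is needed beyond the finite representation type fact already cited.
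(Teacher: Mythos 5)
Your proof is correct and follows essentially the same route as the paper: the endomorphism ring computation via one-dimensional vertex spaces and connectivity of $Q_M$ is exactly the paper's argument, and your deduction of the second statement from finite representation type plus Theorem~\ref{main: cjr} matches the paper's implicit reasoning (cf. Remark~\ref{finite case}).
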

\begin{proof}
Let $M$ be an indecomposable module over $RA_n$ and let $Q_M$ be the corresponding quiver.
An endomorphism $f:M\rightarrow M$ is a set of maps $f=(f_i)_{i\in Q_1}$ where $f_i:M(i)\rightarrow~M(i)$ and for every arrow $a:i\rightarrow j\in Q_1$, the composition $M(a)\circ f_i$ is equal to ${f_j\circ M(a)}$. Since $M(i)=k$ for all $i\in (Q_M)_0$, the map $M(a):k\rightarrow k$ is just a scalar multiplication. Therefore $f_i=f_j$ for each $i,j\in (Q_M)_0$, and hence $f$ is a scalar multiple of the identity map.
\end{proof}

%
%
%
%
%
%
%
%
%
%

From now on, we abuse notation and refer to $\Gamma(\tors RA_n)$ as the complex of hom-orthogonal indecomposable modules over $RA_n$.
(Although, more precisely, $\Gamma(\tors RA_n)$ is a simplicial complex on the set of join-irreducible torsion classes $\filt(\Gen(M))$ in $\tors RA_n$, not the set indecomposable modules.)

We close this section with a technical lemma that will be useful in Section~\ref{sec:shards}.
\begin{lemma}\label{lem: ext}
Suppose that $M$ is an indecomposable module over $RA_n$, and
$S$ is an interval in $[n]$ containing $\supp(M)$. Then:
\begin{enumerate}
\item $M$ is a submodule of some indecomposable $M'$ with
  $\supp(M')=S$ and
\item there is an indecomposable $M''$ with $\supp(M'')=S$, of which
  $M$ is a quotient.
\end{enumerate}
\end{lemma}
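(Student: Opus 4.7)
The plan is to leverage the explicit classification of indecomposables over $RA_n$ as connected subquivers $Q_M$ of $Q$ (equivalently, an interval $\supp(M) = [i,k]$ together with a choice of orientation for each edge of the underlying type-A Dynkin diagram on $[i,k]$). Throughout, write $[i,k]$ for $\supp(M)$ and $[i',k']$ for $S$.

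For part (1), I will construct $M'$ with $\supp(M') = S$ by prescribing an orientation for each edge of $S$: retain the orientations of $M$ on all edges within $[i,k]$; orient the edge between $i-1$ and $i$ as $a_{i-1}\colon i-1 \to i$ (when $i > i'$) and the edge between $k$ and $k+1$ as $a_k^*\colon k+1\to k$ (when $k < k'$), that is, both boundary arrows point \emph{into} $\supp(M)$; and pick any orientation on the remaining edges inside $[i', i-1]$ and $[k+1, k']$. The resulting representation is indecomposable by the classification, with support equal to $S$. I then define $\iota\colon M \to M'$ by $\iota_j = \id_k$ for $j\in \supp(M)$ and $\iota_j = 0$ otherwise, and check that $\iota$ is a morphism: for arrows strictly inside $\supp(M)$ the orientations and linear maps agree; for arrows strictly outside, both $M$-components vanish; and for a boundary arrow such as $a_{i-1}\colon i-1\to i$, the compatibility square reads $M'(a_{i-1})\circ 0 = \id \circ M(a_{i-1}) = 0$, which holds since $M(i-1) = 0$. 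The crucial point is that had the boundary arrow been oriented \emph{outward} (e.g.\ $a_{i-1}^*\colon i\to i-1$), the square would force the nonzero scalar $M'(a_{i-1}^*)$ to factor through $\iota_{i-1} = 0$, a contradiction; the inward orientation avoids exactly this. Injectivity of $\iota$ is then immediate, so $M \hookrightarrow M'$ as required.

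For part (2), I will construct $M''$ by the dual recipe: keep the interior orientations of $M$, and orient the boundary arrows \emph{out} of $\supp(M)$, namely $a_{i-1}^*\colon i\to i-1$ and $a_k\colon k\to k+1$, with arbitrary orientations on the remaining edges of $S$. The natural projection $\pi\colon M'' \to M$ defined by $\pi_j = \id_k$ on $\supp(M)$ and $\pi_j = 0$ elsewhere is then a morphism, by the same case analysis as above but run in reverse: the obstruction now appears if a boundary arrow points inward, which would force a nonzero scalar to land in $M(i-1) = 0$. Surjectivity of $\pi$ is clear, so $M$ is a quotient of $M''$.

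The only potential obstacle is bookkeeping at the two endpoints of $S$ (with boundary cases $i = i'$ or $k = k'$, where no new boundary arrow is added), but these degenerate cases are handled trivially since there is simply nothing to orient. No delicate argument is required beyond checking the commuting squares; the content of the lemma is essentially just that the direction of the boundary arrows governs whether an inclusion extends to a sub- or quotient-type string module.
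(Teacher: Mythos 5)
Your proof is correct and follows the same construction as the paper: extend $Q_M$ to a quiver on $S$ by orienting the boundary arrows to point \emph{into} $\supp(M)$ for the overmodule $M'$ in part (1), and dually \emph{out of} $\supp(M)$ for $M''$ in part (2), with arbitrary orientations elsewhere. The only difference is stylistic: you verify the inclusion and projection maps directly by checking the commuting squares, whereas the paper simply observes that $Q_M$ is then a connected successor-closed (resp.\ predecessor-closed) subquiver of the extended quiver and cites Proposition~\ref{cor: factors and subs}.
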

\begin{proof}
We prove only the first statement, since the second is similar by
Proposition \ref{cor: factors and subs}. Let $Q_M$ be the quiver
associated with $M$, and write $[p,q]=\supp(M)$. Let $Q_{M'}$ be any
quiver with support equal to $S$ satisfying the following:
the orientation of $Q_{M'}$ on the interval $[p,q]$ coincides with that of $Q_M$, and
$Q_{M'}$ contains the the arrows $a_{p-1}$ if $p-1\in S$ and $a_{q}^*$ if $q+1\in S$.
Since $Q_M$ is a connected successor closed subquiver of $Q_{M'}$, we
obtain the desired result.
\end{proof}



\subsection{Noncrossing arc diagrams and canonical join representations}
\label{sec:arc-diagrams}
In this section, we construct a model for the canonical join complex of $\tors RA_n$ called the noncrossing arc complex.
The noncrossing arc complex was first defined in \cite{arcs} where it was used to study certain aspects of the symmetric group.
(We will make use of this connection in the following section.)
Informally, the noncrossing arc complex is a simplicial complex whose faces are collections of non-intersecting curves called arcs.
We will see that the ``noncrossing'' criteria that defines such a face also encodes the hom-orthogonality of indecomposable modules in $\module RA_n$.

A \newword{noncrossing arc diagram} on $n+1$ nodes consists of a vertical column of nodes, labeled $0,\ldots, n$ in increasing order from bottom to top, together with a (possibly empty) collection of curves called arcs.
Each arc $\alpha$ has two endpoints, and travels monotonically up from its bottom endpoint $b(\alpha)$ to its top endpoint~$t(\alpha)$.
For each node in between, $\alpha$ passes either to the left or to the right.
Each pair of arcs $\alpha$ and $\beta$ in a diagram satisfies two compatibility conditions:
\begin{enumerate}[label={(C\arabic*)}]
\item\label{C1} $\alpha$ and $\beta$ do not share a bottom endpoint or a top endpoint;
\item\label{C2} $\alpha$ and $\beta$ do not cross in their interiors.
\end{enumerate}
\begin{figure}[h]
  \centering
   \scalebox{1}{ \includegraphics{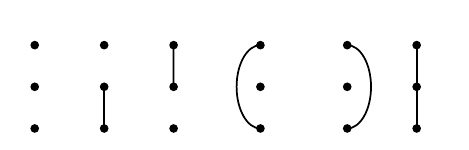}}
     \caption{The noncrossing arc diagrams on $3$ nodes.}
        \label{S3 arcs}
\end{figure}
Each arc is considered only up to combinatorial
equivalence.
That is, each arc $\alpha$ is characterized by its endpoints and which
side each node the arc passes (either left or right) as it travels from
$b(\alpha)$ up to $t(\alpha)$. 
Furthermore, a collection of arcs is drawn so as to have the smallest
number of intersections. 
The \newword{support of an arc} $\alpha$, written $\supp(\alpha)$, is the set $[b(\alpha), t(\alpha)]$.
We write $\suppo(\alpha)$ for the set $(b(\alpha), t(\alpha))$.
We say that $\alpha$ has full support if $\supp(\alpha) = [0,n]$.
We say that $\beta$ is a \newword{subarc} of $\alpha$ if both of the following conditions are satisfied:
\begin{itemize}
\item $\supp(\beta)\subseteq \supp(\alpha)$;
\item $\alpha$ and $\beta$ pass on the same side of each node in $\suppo(\beta)$.
\end{itemize}

 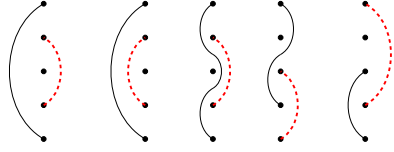
\begin{figure}[h]
 \centering
\scalebox{.45}{
\begin{tikzpicture}

\draw[fill] (-5,1) circle [radius=.07];
\draw[fill] (-5,-2) circle [radius=.07];
\draw[fill] (-5,0) circle [radius=.07];
\draw[fill] (-5,2) circle [radius=.07];
\draw[fill] (-5,-1) circle [radius=.07];
\draw [black, thick] (-5,-2) to [out= 150, in =-150 ] (-5,2);
\draw [red, ultra thick, dashed] (-5,-1) to [out= 30, in =-30 ] (-5,1);

\draw[fill] (-2,1) circle [radius=.07];
\draw[fill] (-2,-2) circle [radius=.07];
\draw[fill] (-2,0) circle [radius=.07];
\draw[fill] (-2,2) circle [radius=.07];
\draw[fill] (-2,-1) circle [radius=.07];
\draw [black, thick] (-2,-2) to [out= 150, in =-150 ] (-2,2);
\draw [red, ultra thick, dashed] (-2,-1) to [out= 150, in =-150 ] (-2,1);

\draw[fill] (0,1) circle [radius=.07];
\draw[fill] (0,-2) circle [radius=.07];
\draw[fill] (0,0) circle [radius=.07];
\draw[fill] (0,2) circle [radius=.07];
\draw[fill] (0,-1) circle [radius=.07];

\draw [black, thick] (0,-2) to [out= 150, in =210 ] (0,-.5);
\draw [black, thick] (0,-.5) to [out = 30, in = -30] (0,.5);
\draw [black, thick] (0,.5) to [out= 150, in =-150 ] (0,2);
\draw [red, ultra thick, dashed] (0,-1) to [out= 30, in =-30 ] (0,1);

\draw[fill] (2,1) circle [radius=.07];
\draw[fill] (2,-2) circle [radius=.07];
\draw[fill] (2,0) circle [radius=.07];
\draw[fill] (2,2) circle [radius=.07];
\draw[fill] (2,-1) circle [radius=.07];

\draw [black, thick] (2,-1) to [out= 150, in =210 ] (2,.5);
\draw [black, thick] (2, .5) to [out= 30, in =-30 ] (2,2);
\draw [red, ultra thick, dashed] (2,-2) to [out= 30, in =-30 ] (2,0);

\draw[fill] (4.5,1) circle [radius=.07];
\draw[fill] (4.5,-2) circle [radius=.07];
\draw[fill] (4.5,0) circle [radius=.07];
\draw[fill] (4.5,2) circle [radius=.07];
\draw[fill] (4.5,-1) circle [radius=.07];

\draw [black, thick] (4.5,-2) to [out= 150, in =-150 ] (4.5,0);
\draw [red, ultra thick, dashed] (4.5,-1) to [out= 30, in =-30 ] (4.5,2);

\end{tikzpicture}
}
\caption{Some pairs of compatible arcs.}
\label{fig: left and right position}
\end{figure}
A set of arcs are \newword{compatible} if there is a noncrossing arc diagram that contains them.
We define the \newword{noncrossing arc complex} on $n+1$ nodes to be the simplicial complex whose vertex set is the set of arcs and whose face set is the collection of all sets of compatible arcs.
(We view each collection of compatible arcs as a noncrossing arc diagram.
When we refer to ``the set of arcs'' we mean ``the set of noncrossing arc diagrams, each of which contains precisely one arc''.)
The next proposition is a combination of \cite[Proposition~3.2 and Corollary~3.6]{arcs}.
Recall that a simplicial complex is \newword{flag} if its minimal non-faces have size equal to 2.
Equivalently, a subset~$F$ of vertices is a face if and only if each pair of vertices is a face.
\begin{proposition}\label{arc flag}
A collection of arcs can be drawn together in a noncrossing arc diagram if and only if each pair of arcs is compatible.
Thus, the noncrossing arc complex is flag.
\end{proposition}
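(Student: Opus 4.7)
The forward implication is immediate from the definitions: if a set of arcs can be drawn together in a single noncrossing arc diagram, then restricting that diagram to any two arcs gives a diagram witnessing their pairwise compatibility. Once we prove the reverse implication, the flag property follows at once, since being a face is exactly the condition that every pair is a face.

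For the reverse implication, the plan is to argue that both compatibility conditions \ref{C1} and \ref{C2} are intrinsically \emph{pairwise} conditions determined by combinatorial data, so pairwise compatibility extends automatically to a simultaneous drawing. The first step is to note that \ref{C1} is trivially global: since it forbids shared top or bottom endpoints, pairwise \ref{C1} among all arcs $\alpha_1,\dots,\alpha_k$ implies that no two of them share an endpoint at all.

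The substantive step is to show that \ref{C2} is also determined combinatorially by the pair. Recall that each arc is specified up to equivalence by its top and bottom endpoints together with the side (left or right) on which it passes each intermediate node. I would show that two arcs $\alpha$ and $\beta$ can be drawn without interior intersection if and only if a purely combinatorial condition on these data holds: roughly, that on the overlap of their supports there is no pair of nodes where $\alpha$ and $\beta$ ``swap sides'' in a way that topologically forces a crossing. This can be verified by a case analysis on the relative containment of $\supp(\alpha)$ and $\supp(\beta)$, using monotonicity in the vertical direction.

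Once \ref{C2} is reduced to a pair-by-pair combinatorial condition, a diagram realizing all arcs simultaneously is built by drawing each arc according to its combinatorial data with small horizontal perturbations so that distinct arcs are never coincident away from the nodes and intersections are minimized. Any crossing in such a drawing would force a combinatorial crossing of some pair, which is ruled out by pairwise compatibility. Hence all $k$ arcs fit into one noncrossing arc diagram, proving the first statement. The flag conclusion then follows formally: a subset of arcs is a face exactly when it is pairwise a face, which is the definition of flagness.

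The main obstacle is the combinatorial reformulation of \ref{C2}. Topologically, ``noncrossing'' is not obviously local to pairs once multiple arcs are present, so care is needed to rule out the possibility that three arcs are pairwise drawable without crossings yet have no simultaneous noncrossing drawing. Handling this cleanly amounts to showing that the only obstruction to two arcs being noncrossing is combinatorial, which I expect to establish by a direct argument using that each arc is monotone and each node is encountered on a definite side.
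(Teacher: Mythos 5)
The paper does not supply its own proof here: it explicitly cites Reading \cite[Proposition~3.2 and Corollary~3.6]{arcs}. So there is no internal argument to compare against, and the question is simply whether your outline constitutes a correct proof.

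Your treatment of \ref{C1} and of the forward direction is fine, and the overall strategy (reduce both conditions to pairwise combinatorial data, then build a simultaneous drawing) is the right shape. But there is a genuine gap, and you have slightly misidentified where it lies. You present ``the combinatorial reformulation of \ref{C2}'' as the main obstacle; in fact \ref{C2} is already a pairwise combinatorial condition essentially by the paper's conventions (arcs are taken up to combinatorial equivalence and drawn to minimize intersections, so ``$\alpha$ and $\beta$ do not cross'' means ``there exists a two-arc drawing with no crossing,'' which depends only on the two arcs' endpoint-and-side data). The step you do not carry out is the one you gesture at in your final sentence: showing that pairwise drawability forces \emph{simultaneous} drawability. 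Concretely, when you ``draw each arc according to its combinatorial data with small horizontal perturbations,'' you need at each gap between consecutive nodes a consistent linear order on the arcs passing through that gap. Pairwise noncrossing gives you, for every pair of overlapping arcs, a well-defined left-of relation on their common overlap; what remains to be shown is that this relation has no $3$-cycles (and more generally is a total order at each gap). Your claim that ``any crossing in such a drawing would force a combinatorial crossing of some pair'' is exactly the assertion that such cycles cannot occur, and it is asserted rather than proved. Until the acyclicity/transitivity of the left-of relation is established, the construction of the global diagram does not go through. Reading's proof handles this via the explicit correspondence between noncrossing arc diagrams and permutations; an alternative self-contained route would be to show directly that the left-of relation at a fixed gap is determined lexicographically by the first node (moving away from the gap) at which two arcs' side-data diverge or one terminates, and that this comparison is transitive. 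Either way, that lemma is the content of the proposition, and your proposal leaves it as an expectation rather than an argument.
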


Our goal is to prove:

\begin{theorem}\label{thm: cjc isomorphism}
The canonical join complex of the lattice $\tors RA_n $ is isomorphic to the noncrossing arc complex on $n+1$ nodes.
\end{theorem}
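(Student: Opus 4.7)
The plan is to use Theorem \ref{main: cjr} to translate the statement into one about hom-orthogonal collections of bricks, and then to exhibit an explicit bijection between indecomposable $RA_n$-modules and arcs on $n+1$ nodes that sends hom-orthogonal pairs to compatible pairs and vice versa. Since $RA_n$ is representation finite, the lattice $\tors RA_n$ is finite and hence $RA_n$ is $\tau$-rigid finite. By Proposition \ref{cor: RA_n schur}, every indecomposable over $RA_n$ is a brick, so Theorem \ref{main: cjr} identifies $\Gamma(\tors RA_n)$ with the simplicial complex whose vertex set is the indecomposables and whose faces are hom-orthogonal subsets. This complex is flag by definition, and the noncrossing arc complex is flag by Proposition \ref{arc flag}, so it suffices to give a bijection $\phi$ on vertices under which hom-orthogonality matches compatibility.

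Next I will construct $\phi$. An indecomposable $M$ over $RA_n$ is determined by its support $[p,q]\subseteq[n]$ together with, for each $i\in[p,q-1]$, a choice of whether the arrow between vertices $i$ and $i+1$ in $Q_M$ is $a_i$ or $a_i^*$. I will define $\phi(M)$ to be the arc with $b(\phi(M))=p-1$ and $t(\phi(M))=q$, passing to the left of the interior node $i\in[p,q-1]$ if this arrow is $a_i$ and to the right if it is $a_i^*$. Since an arc from $b$ to $t$ is similarly specified by an $\{L,R\}$-sequence indexed by $[b+1,t-1]$, the map $\phi$ is a bijection on vertices.

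The core of the argument is that $M$ and $N$, with $\supp(M)=[p,q]$ and $\supp(N)=[p',q']$, are hom-orthogonal if and only if $\phi(M)$ and $\phi(N)$ are compatible. A homomorphism $f\colon M\to N$ is specified by scalars $f_i\in k$ at each $i\in\supp(M)\cap\supp(N)$, subject to the commutativity constraint at each arrow: matching orientations between two consecutive vertices of the overlap force $f_{i+1}=f_i$; mismatched orientations force $f_{i+1}=0$; and at a vertex of the overlap adjacent to a vertex lying in only one support, the orientation of the boundary arrow in that module may force $f_i=0$. I will show that a shared endpoint of arcs corresponds to $p=p'$ (shared bottom) or $q=q'$ (shared top), each of which supplies an explicit nonzero $f$ by propagating a constant along the shared side; and that an interior crossing of $\phi(M)$ and $\phi(N)$ translates into a maximal subinterval of $\supp(M)\cap\supp(N)$ on which the orientations agree while the boundary orientations disagree in a crossing pattern, yielding a nonzero hom in exactly one direction. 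Conversely, when $\phi(M)$ and $\phi(N)$ are compatible, the boundary constraints force $f_i=0$ at each end of the overlap, and the non-crossing condition propagates these zeros through the overlap without leaving any unconstrained scalar.

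The main obstacle is this last step: relating the geometry of non-crossing (and, when supports are nested, the subarc relation) to the combinatorics of the commutativity constraints in a hom between these string modules. Several configurations of sources and sinks at the overlap boundary must be handled, and one must verify that the side information recorded by the arcs faithfully tracks the propagation of the $f_i$, so that boundary-forced zeros sweep through the entire overlap precisely when the arcs are compatible.
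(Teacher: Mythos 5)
Your high-level plan matches the paper exactly: use Theorem~\ref{main: cjr} together with Proposition~\ref{cor: RA_n schur} to identify $\Gamma(\tors RA_n)$ with the flag complex of hom-orthogonal indecomposables, exhibit the support-plus-side bijection between indecomposables and arcs (yours is the mirror image of the paper's $\sigma$, which is harmless), and then match hom-orthogonality to compatibility pair by pair. The setup is sound. But the core of the argument --- showing that $M$ and $N$ are hom-orthogonal if and only if $\phi(M)$ and $\phi(N)$ are compatible --- is not actually carried out. You describe what such an argument would have to do and explicitly label it ``the main obstacle,'' but you never resolve it. That is precisely the content of Proposition~\ref{prop:homs-between-shards} (quoted from Crawley-Boevey) together with Lemmas~\ref{special subarc shared endpoint}, \ref{crossing}, and~\ref{lem: subarc implies crossing} in the paper, and none of that case analysis is present in your proposal.

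There is also an error in the partial analysis you do give, and it matters. You assert that at a mismatched orientation ``mismatched orientations force $f_{i+1}=0$.'' In fact the constraint is directional: if $i,i+1 \in \supp(M)\cap\supp(N)$ and $f\colon M\to N$, then the commutativity relations $N(a_i)f_i = f_{i+1}M(a_i)$ and $N(a_i^*)f_{i+1} = f_iM(a_i^*)$ force $f_{i+1}=0$ when $M$ has $a_i$ and $N$ has $a_i^*$, but force $f_i=0$ when $M$ has $a_i^*$ and $N$ has $a_i$. This asymmetry is exactly what distinguishes $\Hom(M,N)\neq 0$ from $\Hom(N,M)\neq 0$, and it is where the noncrossing geometry enters. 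Getting it wrong means the ``propagation of zeros'' argument you sketch would not close. The paper sidesteps this bookkeeping by invoking the known formula that $\dim\Hom(M,M')$ counts predecessor-closed subarcs of $\sigma(M)$ that are successor-closed subarcs of $\sigma(M')$, and then reduces the compatibility question to the existence of such a common subarc. Either route can work, but the one you chose still needs both the directional constraint fixed and the full case analysis (shared endpoints, nested supports, genuine interior crossings) actually done.
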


We begin the proof of Theorem~\ref{thm: cjc isomorphism} by mapping vertices to vertices.
More precisely, we define a bijection $\sigma$ from the set of
indecomposable modules over $RA_n$ to the set of arcs on $n+1$ nodes.

For an arc $\alpha$ with support $[p-1,q]$ we define:
\begin{align*}
R(\alpha)&=\{i\in [p,q-1]:\, \text{$\alpha$ passes on the right side of $i$}\};\\
L(\alpha)&=\{i\in [p,q-1]:\, \text{$\alpha$ passes on the left side of $i$}\}.
\end{align*}

For an indecomposable $RA_n$ module with support $[p,q] \subseteq [n]$,
we define:
\begin{align*}
R(M)&=\{i\in [p,q-1]:\, \text{$a_{i}$ acts nontrivially on $M$}\};\\
L(M)&= \{i\in [p,q-1]:\, \text{$a_{i}^*$ acts nontrivially on $M$}\}.
\end{align*}
Just as an arc is determied by its endpoints and the binary Left-Right
data, so too is an indecomposable module over $RA_n$ determined by the
binary data of the action of its Lowering-Raising arrows ($a_i^*$ and
$a_i$, respectively). Therefore, we have the following:

\begin{proposition}\label{prop: arc bij}
Let $\sigma$ be the map which sends an indecomposable~$RA_n$ module~$M$ with support $[p,q]$ to the arc $\sigma(M)=\alpha$ satisfying
\begin{itemize}
\item $b(\alpha)=p-1$ and $t(\alpha)=q$;
\item $L(\alpha)=L(M)$;
\item $R(\alpha)=R(M).$
\end{itemize}
Then $\sigma$ is a bijection from the set of indecomposable modules
over $RA_n$ the set of arcs on $n+1$ nodes. (See Figure \ref{fig:
  arcmap example}.)
\end{proposition}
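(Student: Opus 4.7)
The plan is to verify the proposition by showing that both indecomposable $RA_n$-modules and arcs on $n+1$ nodes are parametrized by the same combinatorial data, and that $\sigma$ matches the two parametrizations. From the discussion preceding Proposition~\ref{cor: RA_n schur}, each indecomposable $M$ is determined (up to isomorphism) by the connected subquiver $Q_M$, and $Q_M$ is pinned down by its support $[p,q]\subseteq [n]$ together with, for each $i\in[p,q-1]$, the binary data of which of $a_i$ or $a_i^*$ lies in $(Q_M)_1$. Since $Q_M$ is required to contain at most one of $a_i,a_i^*$ and must be connected on the interval $[p,q]$, it contains exactly one for each such $i$. Thus indecomposables are in bijection with pairs consisting of an interval $[p,q]\subseteq[n]$ and an ordered partition $[p,q-1] = R(M)\sqcup L(M)$.

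On the arc side, an arc $\alpha$ on $n+1$ nodes is determined up to combinatorial equivalence by its endpoints $0\le b(\alpha) < t(\alpha)\le n$ together with, for each interior integer node $i$ with $b(\alpha)<i<t(\alpha)$, a choice of whether $\alpha$ passes on the left or on the right of $i$. Equivalently, an arc corresponds to a pair $(b(\alpha),t(\alpha))$ with $b(\alpha)<t(\alpha)$ together with an ordered partition of the open interval $(b(\alpha),t(\alpha))$ into $L(\alpha)\sqcup R(\alpha)$.

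Setting $b(\alpha)=p-1$ and $t(\alpha)=q$ establishes a bijection between the two endpoint-data sets, since $0\le p-1 < q\le n$ if and only if $[p,q]\subseteq[n]$. Under this identification the interior nodes $(b(\alpha),t(\alpha)) = \{p,p+1,\ldots,q-1\}$ coincide with $[p,q-1]$, so the left/right partition of the arc matches the $L(M)/R(M)$ partition of the module by the very definition of $\sigma$. Consequently $\sigma$ is a well-defined bijection, with inverse built by reading off $(p,q) = (b(\alpha)+1,t(\alpha))$ and then declaring $M$ to be the representation supported on $[p,q]$ with $M(i)=k$, $M(a_i)\neq 0$ precisely when $i\in R(\alpha)$, and $M(a_i^*)\neq 0$ precisely when $i\in L(\alpha)$.

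No serious obstacle is anticipated, since both sides reduce to the same tuple (endpoints, plus a left/right choice at each interior node). The only point requiring any care is the bookkeeping on the index shift $b(\alpha)=p-1$, which is dictated by the fact that arcs live on $n+1$ nodes labeled $0,\ldots,n$ while $RA_n$ has vertices labeled $1,\ldots,n$.
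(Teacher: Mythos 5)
Your argument is correct and is essentially the same as the paper's, which amounts to the one-sentence observation immediately preceding the proposition: both an arc and an indecomposable $RA_n$-module are determined by a support interval together with a binary left/right (resp.\ lowering/raising) choice at each interior index. You have simply spelled out the bookkeeping (index shift $b(\alpha)=p-1$, matching $[p,q-1]$ with the interior nodes) more explicitly than the paper does.
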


 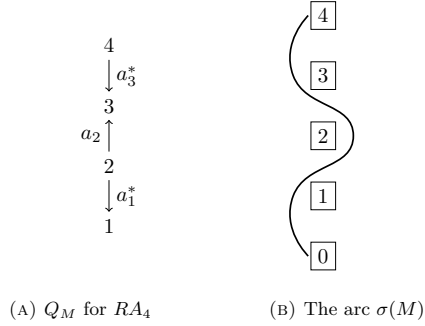
\begin{figure}[h]
 \centering
 \scalebox{.8}{
\begin{subfigure}[b]{0.4\linewidth}
 \begin{tikzpicture}
   \node (A) at (0,0.5) {1};
   \node (B) at (0,1.5) {2};
   \node (C) at (0,2.5) {3};
   \node (D) at (0,3.5) {4};
   \draw[->] (B)--(A) node[midway,right] {$a_1^*$};
   \draw[->] (B)--(C) node[midway,left] {$a_2$};
   \draw[->] (D)--(C) node[midway,right] {$a_3^*$};
   \draw (-3,-0.5);
 \end{tikzpicture}
\caption{$Q_M$ for  $RA_4$}
\end{subfigure}\hfill
\begin{subfigure}[b]{0.3\textwidth}
 \begin{tikzpicture}
   \foreach \n in {0,1,2,3,4}{
     \node[draw] at (0,\n) {\n};
   }
   \draw[thick] plot [smooth,tension=1] coordinates
   {(-0.25,0) (-0.5,1) (0.5,2) (-0.5,3) (-0.25,4)};
   \draw (-1.5,-0.5);
 \end{tikzpicture}
\caption{The arc $\sigma(M)$}
\end{subfigure}}
\caption{Visualization of $\sigma(M)$  and $Q_M$ for a module over $RA_4$. }
\label{fig: arcmap example}
\end{figure}

For the remainder of the section, we let $M$ and $M'$ be indecomposable modules, and write $\alpha$ for $\sigma(M)$ and $\alpha'$ for $\sigma(M')$.
We wish is to reinterpret the hom-orthogonality of $M$ and $M'$ in terms of certain subarcs of $\alpha$ and $\alpha'$.
 Recall that a quiver $Q'$  is called a \newword{predecessor closed subquiver of $Q$} if
$i\rightarrow j$ with $j\in Q'$ implies $i\in Q'$. \newword{Successor closed
subquivers} are defined similarly.
The following result is well-known (and an easy exercise).
See~\cite[Section~2]{Crawley-Boevey}.
\begin{proposition}\label{cor: factors and subs}
Suppose that $M$ and $M'$ are indecomposable modules over $RA_n$
and let $Q_M$ and $Q_{M'}$ be the corresponding quivers.
Then:
\begin{enumerate}
\item $M'$ is a quotient of $M$ if and only if $Q_{M'}$ is a connected predecessor closed subquiver of $Q_M$.
\item $M'$ is a submodule of $M$ if and only if $Q_{M'}$ is a connected successor closed subquiver of $Q_M$.
\end{enumerate}
\end{proposition}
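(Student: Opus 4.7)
The plan rests on the fact that each indecomposable $RA_n$-module $M$ is \emph{thin}: as observed in Section~\ref{RA_n background}, $\dim_k M(i) \in \{0,1\}$ for every vertex $i$, and $M$ is determined (up to isomorphism) by its associated quiver $Q_M$. Consequently, any submodule $K \subseteq M$ is itself thin and completely determined by its support. The subrepresentation condition forces the following: for every arrow $a: i \to j$ of $Q_M$ (so $M(a) \neq 0$), if $i \in \supp(K)$ then $M(a)(K(i))$ is nonzero and contained in $K(j)$, so $j \in \supp(K)$. Thus $\supp(K)$ is the vertex set of a successor-closed subquiver of $Q_M$, and $K$ is indecomposable exactly when this subquiver is connected.

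For part (2), I would identify the associated quiver $Q_K$ as the subquiver of $Q_M$ on the vertex set $\supp(K)$, containing precisely the arrows of $Q_M$ whose endpoints both lie in $\supp(K)$. By the previous paragraph no arrow of $Q_M$ leaves $\supp(K)$, so $Q_K$ is automatically a connected successor-closed subquiver of $Q_M$. Conversely, any connected successor-closed subquiver $Q'$ of $Q_M$ determines a thin subrepresentation by setting $K(i) = M(i)$ for $i \in Q'_0$ and $K(i)=0$ otherwise, with arrow maps inherited from $M$; successor-closure of $Q'$ is exactly the condition needed to make $K$ stable under the action of $RA_n$, and one checks that its associated quiver is $Q'$.

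Part (1) follows by a parallel analysis applied to the quotient $M/K$. Its support is $\supp(M) \setminus \supp(K)$, which is \emph{predecessor-closed} in $Q_M$ (dual to successor-closure of $\supp(K)$), and $Q_{M/K}$ consists of exactly the arrows of $Q_M$ between the remaining vertices. Conversely, any connected predecessor-closed subquiver $Q'$ of $Q_M$ has successor-closed complement in $\supp(M)$, which by the argument above defines a submodule $K \subseteq M$ with $M/K$ corresponding to $Q'$. The only delicate point is confirming that the connectedness of the relevant subquiver corresponds exactly to indecomposability of the sub- or quotient module, which is a standard property of thin representations.
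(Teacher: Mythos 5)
The paper offers no proof of this proposition: it is stated as ``well-known (and an easy exercise)'' with a citation to Crawley-Boevey. Your write-up correctly fills in that exercise, and it uses exactly the argument one would expect from the cited reference, specialized to the thin indecomposables of $RA_n$: a submodule (resp.\ quotient) of a thin module is detected by its support, the subrepresentation condition forces the support to be successor-closed (resp.\ predecessor-closed), and connectedness of the support subquiver is equivalent to indecomposability because every arrow of $Q_M$ restricted to the support remains nonzero. One small caveat worth stating explicitly: ``connected iff indecomposable'' is \emph{not} a blanket property of thin representations (a thin representation with a zero structure map on an arrow decomposes even if the support is connected); it holds here precisely because the sub- and quotient representations inherit nonzero arrow maps from $M$ along their supports. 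You implicitly use this when you note that $Q_K$ contains ``precisely the arrows of $Q_M$'' between its vertices, but it would strengthen the proof to flag it. Otherwise the argument is complete and correct.
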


We define an analogous notion for arcs.
We say that $\beta$ is a \newword{predecessor closed subarc} of $\alpha$ if $\beta$ is a subarc of $\alpha$, and $\alpha$ does not pass to the right of $b(\beta)$, nor to the left of $t(\beta)$.
Similarly, $\beta$ is a \newword{successor closed subarc} if $\alpha$ does not pass to the left of $b(\beta)$ nor to the right of $t(\beta)$.
Observe that each predecessor closed subarc of $\alpha$ corresponds (via the map $\sigma$) to an indecomposable quotient of~$M$--that is, a \textit{predecessor closed} subquiver of $Q_M$.
(The analogous statement holds for sucessor closed subarcs of $\alpha$.)
The next result is from Crawley-Boevey \cite[Section~2]{Crawley-Boevey}, rephrased for our context.

\begin{proposition}\label{prop:homs-between-shards}
Let $\alpha$ and $\alpha'$ be arcs on $n+1$ nodes, and write $M$ and $M'$ for the corresponding indecomposable modules over $RA_n$.
Then the vector space $\Hom_{RA_n} (M, M')$ has dimension equal to the number of predecessor closed subarcs of $\alpha$ which are also successor closed subarcs of $\alpha'$.
\end{proposition}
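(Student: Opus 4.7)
The plan is to identify $\Hom_{RA_n}(M,M')$ via its decomposition through images, then use the $\sigma$ bijection together with Proposition~\ref{cor: factors and subs} to translate the count into a count of subarcs. Since $RA_n$ is gentle with no band modules, this is essentially the Crawley-Boevey description of $\Hom$-spaces between string modules; what the proof needs to add is a careful arc-theoretic translation.

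First, I would argue that $\Hom_{RA_n}(M,M')$ has a basis indexed by the set $\mathcal{N}$ of isoclasses of indecomposable modules $N$ that appear simultaneously as a quotient of $M$ and a submodule of $M'$. Every morphism $f:M\to M'$ decomposes as a sum of morphisms whose images are indecomposable (this is where the absence of band modules in the gentle algebra $RA_n$ enters, yielding the graph-map description). For each indecomposable $N \in \mathcal{N}$, the composition $M\twoheadrightarrow N\hookrightarrow M'$ is nonzero, and the surjection and injection are each unique up to scalar because $M$, $N$, $M'$ are all bricks by Proposition~\ref{cor: RA_n schur}; hence $N$ contributes exactly a one-dimensional subspace of $\Hom_{RA_n}(M,M')$, and these subspaces are linearly independent (morphisms with non-isomorphic images cannot coincide).

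Next, I would invoke Proposition~\ref{cor: factors and subs}: an indecomposable $N$ lies in $\mathcal{N}$ if and only if $Q_N$ is simultaneously a connected predecessor-closed subquiver of $Q_M$ and a connected successor-closed subquiver of $Q_{M'}$. Under the bijection $\sigma$ of Proposition~\ref{prop: arc bij}, these subquiver conditions convert directly into the subarc conditions: writing $\supp(N)=[r,s]$, one checks that $\sigma(N)$ is a subarc of $\alpha=\sigma(M)$ precisely because the $L/R$ data of $N$ is inherited from that of $M$ on $[r,s-1]$; and the predecessor-closedness of $Q_N$ in $Q_M$ amounts exactly to the statement that $\alpha$ does not pass right of $r-1 = b(\sigma(N))$ nor left of $s = t(\sigma(N))$, which is the predecessor-closed subarc condition. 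The corresponding statement for $M'$ and $\alpha'$ is dual. Putting the two translations together gives the claimed count.

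The main technical care lies in verifying this dictionary at endpoints. The subarc conditions ``$\alpha$ does not pass to the right of $b(\sigma(N))$'' and ``$\alpha$ does not pass to the left of $t(\sigma(N))$'' only impose constraints when these are \emph{interior} nodes of $\alpha$; when $b(\sigma(N))=b(\alpha)$ or $t(\sigma(N))=t(\alpha)$, there is simply no incoming arrow in $Q_M$ to worry about, so predecessor-closedness is automatic. Handling this edge case correctly, and doing the analogous book-keeping for the successor-closed condition with respect to $\alpha'$, is the only bit of the argument that is not purely formal; everything else is a clean assembly of the string-module technology and the earlier results of the section.
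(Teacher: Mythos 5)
Your proposal follows essentially the same route as the paper, which simply cites Crawley-Boevey's description of $\Hom$-spaces between string modules together with the observation (made right before the proposition) that $\sigma$ carries predecessor/successor-closed subquivers of $Q_M$, $Q_{M'}$ to predecessor/successor-closed subarcs of $\alpha$, $\alpha'$. You have unpacked the Crawley-Boevey count rather than citing it; one small caveat worth making explicit is that in general that count is by pairs (predecessor-closed substring of the first, successor-closed substring of the second) with matching string, not by isoclasses of the common image $N$—your reformulation in terms of isoclasses is valid here only because indecomposables over $RA_n$ are thin with interval support, so each such $N$ can sit as a quotient of $M$ and as a submodule of $M'$ in at most one way, and the linear-independence of the resulting graph maps likewise relies on this thinness.
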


We argue that $\alpha$ and $\alpha'$ are compatible if and only if
there exists no arc~$\beta$ that is a predecessor closed subarc of one
which is simultaneously a successor closed subarc of the other.
\begin{figure}[h]
\scalebox{.8}{
\begin{tikzpicture}
\draw[fill] (0,1) circle [radius=.02];
\draw[fill] (0,.5) circle [radius=.02];
\draw[fill] (0,.75) circle [radius=.02];
\draw[fill] (0,1.25) circle [radius=.02];
\draw[fill] (0,1.75) circle [radius=.02];
\draw[fill] (0,1.5) circle [radius=.02];
\draw[fill] (0,-2) circle [radius=.07];
\draw[fill] (0,2) circle [radius=.07];
\draw[fill] (0,-1) circle [radius=.02];
\draw[fill] (0,-.5) circle [radius=.02];
\draw[fill] (0,-1.25) circle [radius=.02];
\draw[fill] (0,-1.75) circle [radius=.02];
\draw[fill] (0,-1.5) circle [radius=.02];
\draw[fill] (0,-.75) circle [radius=.02];

\draw [red, ultra thick, dashed] (1,-2.5) to [out= 90, in =-20 ] (0,0);
\draw [red, ultra thick, dashed] (0,0) to [out= 160, in =-90 ] (-1,2.5);
\draw [black, thick] (-1,-2.5) to [out= 90, in =200 ] (0,0);
\draw [black, thick] (0,0) to [out = 20, in = -90] (1,2.5);

\node at (-1.5, -2) {$\alpha$};
\node at (0,-2.5) {$b(\beta)$};
\node at (0, 2.5) {$t(\beta)$};
\node at (1.5, -2) {$\alpha'$};
\end{tikzpicture}
}

\caption{Suppose that $\beta$ is a predecessor closed subarc of $\alpha$ and a successor closed subarc of $\alpha'$.
Then the endpoints of~$\beta$ lie between these two arcs.
The two arcs switch from left side to right side (and vice versa) as they travel from $b(\beta)$ to~$t(\beta)$. }
\label{fig: subarc}
\end{figure}
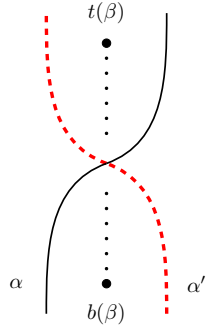
The following two lemmas give one direction of that argument.
Figure~\ref{fig: subarc} may help build some intuition.

At times it will be convenient to consider the relative position of a pair of ``overlapping'' arcs.

We say that $\alpha$ and $\alpha'$ \newword{overlap} if the set $$\left(\supp(\alpha)\cap\suppo(\alpha')\right)\cup \left(\supp(\alpha')\cap \suppo(\alpha)\right)$$ is nonempty.
For two arcs $\alpha$ and $\alpha'$, we say that $\alpha$ is left of $\alpha'$ if both of the following hold:
\begin{itemize}
\item $\left(R(\alpha) \cup \{t(\alpha),b(\alpha)\}\right) \cap \left(\suppo(\alpha') \right)\subseteq R(\alpha')$
\item $\{t(\alpha'), b(\alpha')\} \cap \left(\suppo(\alpha)\right) \subseteq L(\alpha)$.
\end{itemize}
For example, in each diagram in Figure~\ref{fig: left and right position}, the arc $\alpha$ (solid) is \textit{left} of the arc $\alpha'$ (dashed).

\begin{lemma}\label{special subarc shared endpoint}
Suppose that $\alpha$ and $\alpha'$ are distinct arcs that share a bottom endpoint or a top endpoint.
Then there exists an arc $\beta$ satisfying:
$\beta$ is a predecessor closed subarc of one arc, either $\alpha$ or $\alpha'$, and a successor closed subarc of the other arc.
\end{lemma}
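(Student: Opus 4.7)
The plan is to reduce to the case that $\alpha$ and $\alpha'$ share a top endpoint $t$; the shared-bottom case is identical with the up--down roles swapped. The guiding observation is that if we can arrange $t(\beta)=t$, then both the predecessor-closed and successor-closed conditions at $t(\beta)$ are satisfied automatically, since $\alpha$ and $\alpha'$ both terminate at $t$. So our entire task is to choose a sensible $b(\beta)$ and the correct side data for $\beta$ on $\suppo(\beta)$.

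Next I would identify the largest $k<t$ at which $\alpha$ and $\alpha'$ first diverge as we descend from $t$: either (a) both arcs are present at $k$ but pass on opposite sides of it, or (b) $k$ is a bottom endpoint of exactly one of them (while the other continues below). Such a $k$ exists because $\alpha\neq \alpha'$ although they share $t$; and by maximality of $k$, both arcs are defined and agree on the side of every node in the interval $(k,t)$.

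I would then construct $\beta$ in the two cases. In case (a), without loss of generality $\alpha$ passes left of $k$ and $\alpha'$ passes right, and I let $\beta$ be the arc with $b(\beta)=k$, $t(\beta)=t$, passing on the common side of $\alpha$ and $\alpha'$ at each node of $(k,t)$. Then $\beta$ is a subarc of both, the top-endpoint conditions are automatic, and at $b(\beta)=k$ the arc $\alpha$ passes to the left (so $\beta$ is a predecessor-closed subarc of $\alpha$) while $\alpha'$ passes to the right (so $\beta$ is a successor-closed subarc of $\alpha'$). In case (b), without loss of generality $k=b(\alpha)>b(\alpha')$, so the interior agreement on $(k,t)$ realizes $\alpha$ as a subarc of $\alpha'$. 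Setting $\beta=\alpha$ makes $\beta$ trivially a predecessor-closed (and successor-closed) subarc of $\alpha$; and at $b(\beta)=b(\alpha)$, whichever side $\alpha'$ passes of $k$ determines whether $\beta$ is predecessor- or successor-closed in $\alpha'$. In either subcase one obtains $\beta$ of the required type.

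The only real obstacle is bookkeeping: tracking which of $\alpha$ or $\alpha'$ plays the predecessor-closed role and which plays the successor-closed role in each subcase, and verifying the corresponding side conditions at $b(\beta)$. No further combinatorial or module-theoretic input is required beyond the definitions of subarc, predecessor-closed subarc, and successor-closed subarc.
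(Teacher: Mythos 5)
Your proof is correct and follows essentially the same approach as the paper: you locate the first node (descending from the shared top endpoint, rather than ascending from the shared bottom endpoint as in the paper) at which the two arcs diverge, and you take $\beta$ to be the common subarc up to that node. The up--down duality is harmless, and your explicit split into cases (a) divergence by opposite sides and (b) divergence because one arc terminates is precisely the dichotomy the paper compresses into the definition of~$q$.
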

\begin{proof}
By symmetry, we assme that $\alpha$ and $\alpha'$ share a bottom node, and, without loss of generality, this bottom endpoint is equal to 0.
Let $q$ be the smallest number such that either of two conditions below is satisfied:
\begin{itemize}
\item $\alpha$ and $\alpha'$ pass on opposite sides of $q$;
\item $q= \min\{t(\alpha), t(\alpha')\}$.
\end{itemize}

Let $\beta$ be the arc with endpoints $b(\beta)=0$ and $t(\beta)=q$ such that $\beta$ is a subarc of $\alpha$.
(Note, if $q= t(\alpha)$ then $\beta$ coincides with $\alpha$.
If $q\ne t(\alpha)$, we can visualize $\beta$ by cutting $\alpha$ where it passes beside the node $q$, and anchoring the resulting segment at $q$.)
Since $\alpha$ and $\alpha'$ pass on the same side of each node in the
set $[0,q-1]$, we conclude that $\beta$ is also a subarc of $\alpha'$.

At least one of the two arcs, $\alpha$ or $\alpha'$, passes $q$.
By symmetry, assume that this is the arc $\alpha$, and assume that $\alpha$ passes on the right side of $q$.
Then $\beta$ is a predecessor closed subarc of $\alpha$ and also a
successor closed subarc of $\alpha'$.
The other choices yield similar results. 
\end{proof}

\begin{lemma}\label{crossing}
Suppose that $\alpha$ and $\alpha'$ are distinct arcs that neither
share a bottom nor top endpoint.
If $\alpha$ and $\alpha'$ intersect in
their interiors, then there exists an arc $\beta$ which is a predecessor closed subarc of one arc and a successor closed subarc of the other.
\end{lemma}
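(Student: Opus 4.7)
The plan is to construct the desired arc $\beta$ explicitly by locating where $\alpha$ and $\alpha'$ swap their relative horizontal positions. Since $\alpha$ and $\alpha'$ intersect in their interiors, their supports must overlap in an interval $I=[P,Q]$ with $P<Q$, where $P=\max\{b(\alpha),b(\alpha')\}$ and $Q=\min\{t(\alpha),t(\alpha')\}$. Because the two arcs share no endpoints, $P$ is the bottom endpoint of exactly one of them and $Q$ is the top endpoint of exactly one of them.

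I would first assign to each node $i\in I$ a relative-position label in $\{L,R,\sim\}$: label $L$ if $\alpha$ sits strictly to the left of $\alpha'$ at the height of $i$, $R$ if strictly to the right, and $\sim$ if the arcs pass on the same side. The label is determined by which of $L(\alpha), R(\alpha), L(\alpha'), R(\alpha')$ contains $i$, together with which arc, if any, has $i$ as an endpoint. I would then observe that the condition ``$\alpha$ is left of $\alpha'$'' introduced before Lemma~\ref{special subarc shared endpoint} is precisely the statement that no node in $I$ carries the label $R$, and symmetrically for ``$\alpha'$ is left of $\alpha$.'' Since the arcs overlap, share no endpoints, and intersect in their interiors, neither is left of the other, so the labels $L$ and $R$ both occur on $I$.

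Next, I would pick $p<q$ in $I$ with label $L$ at $p$, label $R$ at $q$, and label $\sim$ at every node of $I$ strictly between $p$ and $q$ (for instance, take $p$ to be the last node with label $L$ before the first subsequent node with label $R$, and let $q$ be that subsequent node; if the switch runs the other way, swap the roles of $\alpha$ and $\alpha'$). Define $\beta$ to be the arc with $b(\beta)=p$, $t(\beta)=q$ that passes on the same side of each $i\in(p,q)$ as $\alpha$ (equivalently $\alpha'$) does; this is well-defined because every intermediate label is $\sim$. Then $\beta$ is a subarc of both $\alpha$ and $\alpha'$. At $p$, the label $L$ means $\alpha$ does not pass to the right of $p$ (or $p=b(\alpha)$) and $\alpha'$ does not pass to the left of $p$ (or $p=b(\alpha')$); at $q$, the label $R$ means $\alpha$ does not pass to the left of $q$ (or $q=t(\alpha)$) and $\alpha'$ does not pass to the right of $q$ (or $q=t(\alpha')$). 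These are exactly the conditions that $\beta$ is a predecessor-closed subarc of $\alpha$ and a successor-closed subarc of $\alpha'$.

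The main obstacle is the translation between the geometric statement ``$\alpha$ and $\alpha'$ intersect in their interiors'' and the combinatorial statement ``the labels at $P$ and $Q$ are opposite.'' This step requires a short case analysis on whether $P=b(\alpha)$ or $P=b(\alpha')$, and similarly for $Q$, in order to verify that the arc having an endpoint at $P$ (respectively $Q$) is forced by the crossing to lie on the opposite side of the other arc from its position at the opposite end of $I$. All of the remaining verifications are routine translations between the L/R data of arcs and the side conditions defining predecessor- and successor-closed subarcs.
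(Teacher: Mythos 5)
Your proof is correct and takes a genuinely different route from the paper's. The paper splits into three cases (one arc a subarc of the other; same side at all shared interior nodes but neither a subarc; opposite sides at some shared interior node) and in each case constructs $\beta$ by a case-specific geometric observation, with the third case implicitly containing the core idea. Your labeling of each node of the overlap interval $I$ by $\{L,R,\sim\}$ and then locating an $L$-to-$R$ transition with only $\sim$-labels in between is a clean unification: your $\beta$ specializes to $\alpha'$ itself in the paper's first case, to the arc with endpoints $b(\alpha),t(\alpha')$ in the second, and to the paper's minimally-separated $p<q$ in the third. What this buys you is a single construction with no branching; what it costs is that the one nontrivial claim --- that a crossing forces both labels $L$ and $R$ to occur --- is pushed into a single lemma rather than being dispatched case-by-case. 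That claim is at exactly the same level of rigor that the paper leaves implicit (``Otherwise, the arcs can be drawn so they do not cross''), so you are not assuming more than the authors do.

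One small imprecision in your final paragraph: you describe the remaining obstacle as showing ``the labels at $P$ and $Q$ are opposite,'' but that is not quite what your argument uses, and it is in fact false in general. For example, take $\alpha$ with $b(\alpha)=0,t(\alpha)=4$ passing $L,R,L$ at nodes $1,2,3$ and $\alpha'$ with $b(\alpha')=1,t(\alpha')=3$ passing $L$ at node $2$; these arcs cross, but the labels on $I=[1,3]$ are $L,R,L$, so $P$ and $Q$ carry the \emph{same} label. What your argument actually needs (and what you correctly state in the body of the proof) is that both $L$ and $R$ occur somewhere in $I$, equivalently that neither arc is left of the other. Both labels occurring somewhere still yields an adjacent $L$/$R$ pair across a run of $\sim$'s, so the construction goes through; just align the framing of the ``obstacle'' with the claim you actually invoke.
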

\begin{proof}
First, we consider the case in which one of the two arcs is a subarc of the other.
By symmetry, we assume that $\alpha'$ is a subarc of $\alpha$.
Observe that $b(\alpha')$ and $t(\alpha')$ must be on opposite sides of $\alpha$.
(Otherwise, $\alpha$ and $\alpha'$ can be drawn so they do not cross.)
Therefore, by definition, either $\alpha'$ is a predecessor closed or
successor closed subarc of~$\alpha$.

Next, suppose that $\alpha'$ and $\alpha$ pass on the same side of each node in the set $\suppo(\alpha)\cap\suppo(\alpha')$, but neither arc is a subarc of the other.
By symmetry, assume that $b(\alpha')<b(\alpha) < t(\alpha')<t(\alpha)$ and that $\alpha'$ passes to the right of $b(\alpha)$.
Observe that $\alpha$ also passes to the right of $t(\alpha')$.
Otherwise, the arcs can be drawn so that $\alpha'$ lies strictly to the left of $\alpha$, and the two arcs never cross.
See Figure~\ref{fig: crossing}.
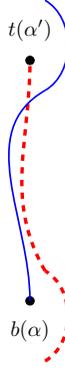
\begin{figure}[h]
\scalebox{.8}{
\begin{tikzpicture}

\draw[fill] (0,-2) circle [radius=.07];
\draw[fill] (0,2) circle [radius=.07];

\draw [red, ultra thick, dashed] (.25,-3) to [out= 30, in =-30 ] (.25,-1.5);
\draw [red, ultra thick, dashed] (.25,-1.5) to [out= 120, in =-90 ] (0,2);

\draw [blue, thick,] (.25,3) to [out= -30, in = 30 ] (.25,1.5);
\draw [blue, thick,] (.25,1.5) to [out= 210, in =90 ] (0,-2);

\node at (0,-2.5) {$b(\alpha)$};
\node at (0, 2.5) {$t(\alpha')$};
\end{tikzpicture}
}

\caption{A demonstration of the proof of Lemma~\ref{crossing}.
The arc $\alpha$ is shown in blue, and $\alpha'$ is shown in dashed red.}
\label{fig: crossing}
\end{figure}Let~$\beta$ be the subarc of $\alpha$ with bottom endpoint $b(\alpha)$ and top endpoint $t(\alpha')$.
Since $\alpha$ and $\alpha'$ pass on the same side of each node where they overlap, $\beta$ is also a subarc of $\alpha'$.
We conclude that $\beta$ is a predecessor closed subarc of $\alpha$ that is also a successor closed subarc of $\alpha'$.

Finally, we assume that $\alpha$ and $\alpha'$ pass on opposite sides
of some node $p$ belonging to
$\suppo(\alpha)\cap\suppo(\alpha')$. 
By symmetry, assume that $\alpha$ passes to the left side of $p$ and $\alpha'$ passes to the right.
Consider the set \[O=\left(\supp(\alpha)\cap\suppo(\alpha')\right)\cup \left(\supp(\alpha')\cap \suppo(\alpha)\right).\]
By way of contradiction, suppose for each $q\in O$, either $\alpha$ passes on the left side of $q$ or $\alpha'$ passes on the right of $q$.
Then for each $q\in O$, either $\alpha$ and $\alpha'$ pass on the same side of $q$, or $q$ lies between them, with $\alpha$ on the left and $\alpha'$ on the right.
We conclude that $\alpha$ is strictly to the left of $\alpha'$, and
the two arcs never cross, a contradiction.

Thus, there exists some $q\in O$ satisfying: $\alpha$ does not pass to
the left of $q$ nor does $\alpha'$ pass to the right. 
We choose $p$ and $q$ so that $|p-q|$ is minimal.
By symmetry, assume that $p<q$.
Let $\beta$ be the subarc of $\alpha$ with endpoints $p<q$.
The minimality of $|p-q|$ implies that $\beta$ is also a subarc of $\alpha'$.
We conclude that $\beta$ is a predecessor closed subarc of $\alpha$ that is also a successor closed subarc of $\alpha'$.
\end{proof}
Together, the previous two lemmas (and Proposition~\ref{prop:homs-between-shards}) imply that if $\alpha$ and $\alpha'$ are not compatible, then there exists some homomorphism between $M$ and $M'$.
The next lemma completes the proof of Theorem~\ref{thm: cjc isomorphism}.

\begin{lemma}\label{lem: subarc implies crossing}
Suppose that $\alpha$ and $\alpha'$ are compatible.
Then there exists no arc $\beta$ that is predecessor closed subarc of
one arc and a successor closed subarc of the other.
Thus, the modules $M$ and $M'$ corresponding to $\alpha$ and $\alpha'$
under the bijection $\sigma$ are hom-orthogonal. 
\end{lemma}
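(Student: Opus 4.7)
The plan is to prove the contrapositive: if there exists an arc $\beta$ that is a predecessor closed subarc of one of $\alpha,\alpha'$ and simultaneously a successor closed subarc of the other, then $\alpha$ and $\alpha'$ cannot coexist in a noncrossing arc diagram. Once this is done, the final sentence follows immediately from Proposition~\ref{prop:homs-between-shards}, which gives $\dim\Hom_{RA_n}(M,M')$ and $\dim\Hom_{RA_n}(M',M)$ as counts of exactly such $\beta$.

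By the symmetry in the statement, I would assume $\beta$ is a predecessor closed subarc of $\alpha$ and a successor closed subarc of $\alpha'$, and then read off the positional data from the definitions. At the node $b(\beta)$, either $b(\alpha)=b(\beta)$ or $\alpha$ passes on the \emph{left}; similarly either $b(\alpha')=b(\beta)$ or $\alpha'$ passes on the \emph{right}. At $t(\beta)$ the situation is mirrored: $\alpha$ either ends there or passes to the right, while $\alpha'$ either ends there or passes to the left. On the interior nodes of $\beta$, both $\alpha$ and $\alpha'$ agree with $\beta$ (since $\beta$ is a subarc of both), so the arcs pass on the same side of each such node.

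The case analysis then splits naturally. If $b(\alpha)=b(\alpha')$ or $t(\alpha)=t(\alpha')$, then $\alpha$ and $\alpha'$ share an endpoint, violating \ref{C1}. Otherwise, at the level of $b(\beta)$ the arc $\alpha$ is weakly to the left of $\alpha'$, while at the level of $t(\beta)$ this is reversed; tracing both arcs monotonically upward through the strip between $b(\beta)$ and $t(\beta)$, and using that they pass on the same side of every interior node, I would conclude that $\alpha$ and $\alpha'$ must cross in the interior, violating \ref{C2}. This contradicts the hypothesis that $\alpha$ and $\alpha'$ are compatible, completing the contrapositive.

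The main obstacle I anticipate is the boundary situation in which $\beta$ shares an endpoint with $\alpha$ or $\alpha'$ but $\alpha$ and $\alpha'$ still do not share an endpoint with each other (for instance $b(\alpha)=b(\beta)$ while $b(\alpha')<b(\beta)$). Here the phrase ``$\alpha$ is to the left of $\alpha'$ at $b(\beta)$'' has to be interpreted carefully because $\alpha$ only has its endpoint at $b(\beta)$ and is not passing that node at all; I would formalize this by arguing that $\alpha'$ (which approaches $b(\beta)$ from the right and emerges above $t(\beta)$ on the left) must nevertheless sweep across the terminating segment of $\alpha$, which is enough to produce a crossing. A unified way to package all subcases is to consider the relative horizontal position of $\alpha'$ with respect to $\alpha$ just above $b(\beta)$ and just below $t(\beta)$: in every subcase that does not already force a shared endpoint, the sign flips, and monotonicity of the arcs forces a crossing in between.
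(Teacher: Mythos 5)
Your proposal is correct and follows essentially the same line as the paper's proof: after reducing by symmetry, one shows $\alpha$ lies to the left of $\alpha'$ near $b(\beta)$ and to the right near $t(\beta)$, so the two arcs either share an endpoint (violating \ref{C1}) or must cross in their interiors (violating \ref{C2}); the boundary situation you flag as the main obstacle is exactly the subcase $t(\beta)=t(\alpha')$ that the paper treats separately.
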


\begin{proof}
By way of contradiction, assume there exists an arc $\beta$ that is both a predecessor closed subarc of $\alpha$ and a successor closed subarc of $\alpha'$.
Because $\alpha$ and $\alpha'$ do not share a bottom endpoint, either $b(\alpha)\ne b(\beta)$ or $b(\alpha')\ne b(\beta)$.
By symmetry, assume that $b(\alpha)\ne b(\beta)$.
Thus, $\alpha$ passes strictly to the left of $b(\beta)$.
Since $\alpha'$ does not pass to the left of $b(\beta)$, we conclude that $\alpha$ lies strictly on the left side of $\alpha'$, wherever the two arcs overlap.


For the moment, assume that $t(\beta)\in \suppo(\alpha)\cap \suppo(\alpha')$.
On the one hand, $\alpha$ passes strictly to the right of $t(\beta)$.
On the other hand, $\alpha'$ passes strictly to the left of $t(\beta)$, so that $\alpha$ and $\alpha'$ intersect in their interiors.
(See Figure~\ref{fig: subarc}.)
We obtain a similar contradiction if $t(\beta)=t(\alpha)$.

Finally, suppose that $t(\beta)= t(\alpha')$.
Since $\alpha$ and $\alpha'$ do not share a top endpoint and $\alpha$ is left of $\alpha'$, we conclude that $\alpha$ passes strictly to the left of $t(\alpha')$.
This contradicts the fact that $\beta$ is a predecessor closed subarc of~$\alpha$.
\end{proof}

\subsection{The weak order on $A_n$ and $\tors RA_n$}
\label{sec:shards}
In this section we prove that $\tors RA_n$ is isomorphic to the weak order on $A_n$.
We begin by reviewing the weak order and its connection to noncrossing arc diagrams.

Recall that the type-A Weyl group of rank $n$ is isomorphic to the symmetric group on $[n+1]$.
It will be convenient for us to realize $A_n$ as the symmetric group on $\{0,\ldots,n\}= [0,n]$.
For the remainder of the paper, we do not distinguish between the elements of $A_n$ and the permutations on $[0,n]$.
We write $w\in A_{n}$ in its one-line notation as $w= w_0\ldots w_{n}$ where $w_i=w(i)$.
An \newword{inversion} of $w$ is a pair $(w_i,w_j)$ with $w_i>w_j$ and $i<j$.
Each permutation is uniquely determined by its inversion set.

In the weak order, permutations are partially ordered by containment of their inversion sets.
In particular, $w\covers v$ if and only if $\inv(v) \subseteq \inv(w)$ and $\inv(w) \setminus \inv(v)$ has precisely one element.
This unique inversion is a descent for $w$.
(Recall that a \newword{descent} is an inversion $(p,q)$ such that $q=w_{i}$ and $p=w_{i+1}$.)
In this way, each descent of $w$ corresponds bijectively to a permutation that is covered by $w$.
It follows that $w$ is join-irreducible if and only if it has precisely one descent.

We describe a bijection $\delta$ from the set of permutations in $A_n$
to the set of noncrossing arc diagrams on $n+1$ nodes.
For each descent $(w_i,w_{i+1})$ of a permutation $w$, construct an arc
$\alpha$ satisfying: $t(\alpha)=w_i$, $b(\alpha)=w_{i+1}$, 
and $w_j \in R(\alpha)$ (resp. $w_j \in L(\alpha)$) whenever $w_j$ is
in the interval $(b(\alpha),t(\alpha))$ and $j>i+1$
(resp. $j<i$). 
The arc diagram $\delta(w)$ is then the union of these arcs. 
Since descents become arcs, it follows that $\delta$ restricts to a bijection from the set of join-irreducible permutations (on $[0,n]$) to the set of arcs (on $n+1$ nodes).
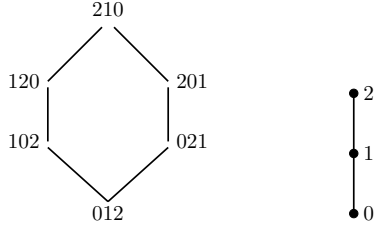
\begin{figure}[h]
  \centering
  \scalebox{.8}{
\begin{tikzpicture}
\draw [black, thick] (0,0) -- (-1,.9);
\draw [black, thick] (0,0) -- (1,.9);
\draw [black, thick] (1,1) -- (1,1.9);
\draw [black, thick] (-1,1) -- (-1,1.9);
\draw [black, thick] (1,2) -- (.1,2.9);
\draw [black, thick] (-1,2) -- (-.1,2.9);
\node at (0,-.2) {$012$};
\node at (1.4,1) {$021$};
\node at (-1.4,1) {$102$};
\node at (1.4,2) {$201$};
\node at (-1.4,2) {$120$};
\node at (0,3.2) {$210$};
\end{tikzpicture}
\qquad
\qquad
\qquad

\begin{tikzpicture}
\draw[fill] (0,2) circle [radius=.07];
\draw[fill] (0,1) circle [radius = .07];
\draw[fill] (0,0) circle [radius=.07];

\draw [black, thick] (0,0) to (0,1);
\draw [black, thick] (0,2) to (0,1);

\node at (0.25,0) {$0$};
\node at (.25, 2) {$2$};
\node at (.25, 1) {$1$};
\end{tikzpicture}
}
 \caption{The weak order on the symmetric group $A_2$ and the noncrossing arc diagram corresponding to $210$.}
        \label{cayley}
\end{figure}
\begin{example}\label{delta example}
\normalfont
Consider the permutation $w= 210$, the top element in the weak order on $A_2$.
The noncrossing arc diagram $\delta(w)$ consists of two arcs, one connecting 0 to 1, and the second connecting 1 to 2.
(See Figure~\ref{cayley}.)
Each arc corresponds (via $\delta$) to a join-irreducible permutation.
In this example, the arc connecting 0 to 1 corresponds to $102$, and the arc which connects 1 to 2 corresponds to $021$.
Observe that that $210 = \Join \{102, 021\}$.
In fact, $\Join \{102, 021\}$ is the canonical join representation of $210$.
This fact is no coincidence.
\end{example}

In general, the weak order on $A_n$ is a lattice in which each permutation has a canonical join representation \cite{weak order}.
The next theorem is a combination of \cite[Theorem~3.1 and
Corollary~3.4]{arcs}.

\begin{theorem}\label{prop: A_n CJC}
The bijection~$\delta$ induces an isomorphism from the canonical join complex of the weak order on $A_n$ to the noncrossing arc complex on $n+1$ nodes.
\end{theorem}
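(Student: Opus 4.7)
The plan is to decompose the isomorphism into two compatible bijections: one on vertices and one on top-dimensional faces. The bijection $\delta$ already sends permutations to arc diagrams and, when restricted, sends join-irreducible permutations (those with a single descent) to single arcs; this gives the vertex-level bijection. What needs proof is that under $\delta$, the face structure of the canonical join complex on $A_n$ agrees with the face structure of the noncrossing arc complex.

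My first step is to exhibit, for each permutation $w\in A_n$, the canonical join representation of $w$ as the join of the join-irreducible permutations whose arcs appear in $\delta(w)$. For each arc $\alpha$ of $\delta(w)$, let $j_\alpha$ denote the unique join-irreducible permutation with $\delta(j_\alpha)=\{\alpha\}$. I would check that each $j_\alpha\le w$ (the unique descent of $j_\alpha$ is a descent of $w$, and $\inv(j_\alpha)\subseteq \inv(w)$ by inspecting the construction of $\delta$) and that $\Join_{\alpha} j_\alpha = w$ (by recovering all inversions of $w$ on the right-hand side). Since cover relations in weak order correspond to descents, the number of arcs in $\delta(w)$ equals the number of elements covered by $w$, which forces this join representation to be irredundant. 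Appealing to \cite{weak order}, the weak order on $A_n$ is semidistributive (in fact congruence-uniform), so each element has a unique canonical join representation of cardinality equal to the number of its lower covers; combined with irredundancy, this identifies $\{j_\alpha:\alpha\in\delta(w)\}$ as the canonical join representation of $w$.

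With this in place, the second step is the simplicial bijection. Given any canonical join representation $\{j_1,\ldots,j_k\}$ with join $w$, the arcs $\delta(j_i)$ are exactly the arcs of $\delta(w)$, hence form a noncrossing arc diagram; conversely, any noncrossing arc diagram $D$ arises as $\delta(w)$ for $w=\delta^{-1}(D)$, so the join-irreducibles corresponding to the arcs of $D$ form the canonical join representation of $w$. Proposition~\ref{complex} ensures that every subset of a canonical join representation is again canonical, so the correspondence extends to faces of all dimensions. Compatibility with the flag structure on the arc side (Proposition~\ref{arc flag}) and on the lattice side (a general consequence of semidistributivity, as recorded in \cite[Proposition~2.2]{arcs}) then packages the whole correspondence into the desired simplicial isomorphism.

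The main obstacle in implementing this outline is the verification in step one: one must precisely identify $j_\alpha$ from the combinatorial data of the arc $\alpha$ and then verify $\Join_\alpha j_\alpha = w$ at the level of inversion sets. Equivalently, the inversion set of $w$ must decompose as the union of the inversion sets of the $j_\alpha$'s. This is the combinatorial heart of the argument in \cite{arcs}; it relies on observing that each inversion of $w$ is witnessed by a unique arc of $\delta(w)$, either as the descent encoded by that arc or as a pair whose relative order is forced by how that arc passes the intermediate nodes (left versus right). Once this witness correspondence is made explicit, both the join identity and its canonicality follow by inversion-set bookkeeping.
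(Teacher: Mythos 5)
The paper does not prove this theorem: it is stated as a direct citation of \cite[Theorem~3.1 and Corollary~3.4]{arcs}, so there is no internal argument for your sketch to be compared against. Your outline is faithful to the structure of Reading's proof in \cite{arcs} --- identify the vertex bijection, show that the arcs of $\delta(w)$ correspond exactly to the canonical joinands of $w$ via inversion sets, and then promote this to an isomorphism of simplicial complexes using the flag property on both sides --- and you correctly flag the inversion-set bookkeeping as the combinatorial heart of the matter.

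One inference in your step one is not quite airtight as stated. You argue that because the number of arcs of $\delta(w)$ equals the number of lower covers of $w$, the join representation $\Join_\alpha j_\alpha = w$ must be irredundant. Matching cardinalities does not by itself give irredundancy; nor does irredundancy plus the right cardinality immediately identify $\{j_\alpha\}$ as the canonical join representation. What actually closes the loop in \cite{arcs} is twofold: a direct verification, via inversion sets, that removing any arc strictly decreases the join (irredundancy), together with a separate verification that $\{j_\alpha\}$ refines every other irredundant join representation of $w$ (lowest-ness). Alternatively, one can invoke the result that in a finite semidistributive lattice each element has a canonical join representation whose cardinality equals its number of lower covers, and then check that the particular irredundant representation $\{j_\alpha\}$ achieves $w$ --- but then irredundancy must still be shown independently, not deduced from the count. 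Since you explicitly defer this verification to \cite{arcs}, the outline is sound as a sketch, but a complete proof would need to carry out that inversion-set argument rather than appeal to cardinality.
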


We immediately obtain the following corollary.
\begin{corollary}\label{cor: RA_n cjc}
The canonical join complex of $RA_n$ is isomorphic to the canonical join complex of the weak order on $A_n$.
\end{corollary}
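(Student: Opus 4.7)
The plan is essentially to compose two isomorphisms that have already been established in the preceding text, so the argument will be brief. The canonical join complex $\Gamma(\tors RA_n)$ is identified with the noncrossing arc complex on $n+1$ nodes via the bijection $\sigma$ of Proposition~\ref{prop: arc bij}, as spelled out in Theorem~\ref{thm: cjc isomorphism}. On the other side, Theorem~\ref{prop: A_n CJC} identifies the canonical join complex of the weak order on $A_n$ with the same noncrossing arc complex via the bijection $\delta$. So the natural thing to do is simply to compose $\delta^{-1}\circ \sigma$ (or equivalently, run $\sigma^{-1}\circ \delta$ in the other direction) and observe that this composition sends a face of $\Gamma(\tors RA_n)$ to a face of the canonical join complex of the weak order on $A_n$.

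More concretely, I would phrase it as follows. A face of $\Gamma(\tors RA_n)$ is, by Theorem~\ref{main: cjr} and Proposition~\ref{cor: RA_n schur}, a set of pairwise hom-orthogonal indecomposable $RA_n$-modules $\{M_1,\ldots, M_r\}$. Applying $\sigma$ componentwise produces a set of arcs $\{\sigma(M_1),\ldots,\sigma(M_r)\}$ which, by Theorem~\ref{thm: cjc isomorphism}, is a face of the noncrossing arc complex. Then applying $\delta^{-1}$ to each arc gives a set of join-irreducible permutations whose join is the canonical join representation of some $w\in A_n$, by Theorem~\ref{prop: A_n CJC}. The inverse composition runs identically backwards, so this yields the desired isomorphism of simplicial complexes.

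There is no real obstacle here since both halves are already theorems in the paper; the corollary is genuinely just a transitivity statement about the three complexes (canonical join complex of $\tors RA_n$, noncrossing arc complex on $n+1$ nodes, canonical join complex of the weak order on $A_n$) all being pairwise isomorphic. The only thing worth being careful about is to observe that $\sigma$ and $\delta$ are both defined as bijections on vertex sets and, by Proposition~\ref{arc flag}, the noncrossing arc complex is flag, so matching vertices together with the pairwise compatibility conditions (hom-orthogonality on the module side, the descent/arc condition on the permutation side) is enough to match higher-dimensional faces as well. This makes the composition automatically a simplicial isomorphism and completes the proof.
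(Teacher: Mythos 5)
Your proposal is correct and matches the paper's proof, which derives the corollary immediately by composing the isomorphism of Theorem~\ref{thm: cjc isomorphism} (between $\Gamma(\tors RA_n)$ and the noncrossing arc complex) with that of Theorem~\ref{prop: A_n CJC} (between the noncrossing arc complex and the canonical join complex of the weak order). The extra remarks about flagness and pairwise compatibility are sound but unnecessary, since both theorems already assert full simplicial isomorphisms.
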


Let us a consider the following composition:
\[\Gamma(\tors RA_n) \xrightarrow{\sigma} \{\text{Noncrossing arc diagrams on $n+1$ nodes}\} \xrightarrow{\delta^{-1}} \Gamma(A_n)\]
Recall that the map $\sigma$ sends a collection of hom-orthogonal modules $\calE$ to a collection of compatible arcs $\sigma(\calE)= A$.
By Theorem~\ref{prop: A_n CJC}, $\delta^{-1}$ sends this collection of compatible arcs to the permutation $w = \Join \{\delta^{-1}(\alpha):\, \alpha\in A\}$, where this join $\Join \{\delta^{-1}(\alpha):\, \alpha\in A\}$ is the canonical join representation of $w$.
Define a map ${\phi: \tors RA_n \to A_n}$ as follows:
\[\phi: \Join \{\filt(\Gen(M)):\, M\in \calE\} \mapsto \Join \{\delta^{-1}(\sigma(M)):\, M\in \calE\}.\]
Because each torsion class in $\tors RA_n$ and each permutation in $A_n$ has a canonical join representation (see Corollary~\ref{cor: torsion labeling}), and because the canonical join representation is unique, we conclude that $\phi$ is a bijection.

\begin{example}\label{phi example}
\normalfont
Consider the join of torsion classes $\calT = \Join \{\{1\}, \{2\}\}$  in $\tors RA_2$.
(We write $\{i\}$ for the torsion class consisting of the simple module at vertex $i$.)
Observe that $\calT$ is the equal to the entire module category over $RA_2$.
Since the simple modules $1$ and $2$ are hom-orthogonal, the join $\Join \{\{1\}, \{2\}\} $ is the canonical join representation of $\calT$.

The map $\sigma$ sends $\{i\}$ to the arc with endpoints $i-1$ and $i$.
Thus, $\sigma(\calT)$ is the noncrossing arc diagram with two arcs: one arc that connects 0 to 1; and another arc that connects~1 to~2.
Recall from Example~\ref{delta example} that the permutation associated to this diagram is $210$.
Thus, $\phi(\calT)= 210$.
\end{example}

Our goal is to show that $\phi$ is actually a lattice isomorphism.
To that end, we now give an alternative description of $\phi$ in terms of inversion sets.
Recall that permutations in $A_n$ are ordered by containment of inversion sets.
Each inversion set $I = \inv(w)$ is \newword{transitively closed}, meaning that whenever $\{(p,q), (q,r)\}$ is a subset of $I$ then $(p,r)\in I$.
Consider the set $S$ of all pairs $(p,q)$ such that $p<q$ and $p,q\in [0,n]$, and write $\Omega$ for the power set of $S$.
The \newword{transitive closure of $I$} is the unique smallest (under containment) set $\tran(I)$ in $\Omega$ that is transitively closed and contains $I$.
Just as we compute the join of a set of torsion classes by taking their filtration closure, we compute the join of a set of permutations by taking their transitive closure.
More precisely:
\begin{proposition}\label{join of perms}
Suppose that $U$ is a collection of permutations in $A_n$.
The inversion set of the permutation $w=\Join U$ is equal to the transitive closure of the set $I= \{\inv(u):\, u\in U\}$.
\end{proposition}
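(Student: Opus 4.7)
The plan is to prove both inclusions between $\inv(w)$, where $w=\Join U$, and $\tran(I)$; I will interpret the set $I$ appearing in the statement as the union $\bigcup_{u\in U}\inv(u)$ of inversion sets, since transitive closure is an operation on sets of pairs. The easy inclusion is $\tran(I)\subseteq \inv(w)$. Since $w$ is an upper bound for $U$ in weak order, $\inv(u)\subseteq \inv(w)$ for every $u\in U$, whence $I\subseteq \inv(w)$. Because the inversion set of any permutation is itself transitively closed, we may pass to the transitive closure on the left without leaving $\inv(w)$.

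For the reverse inclusion, the strategy is to produce a permutation $v\in A_n$ whose inversion set equals $T:=\tran(I)$. Once $v$ is in hand, it is automatically an upper bound for $U$ in weak order (since $\inv(u)\subseteq T=\inv(v)$ for each $u\in U$), so $w\le v$, which translates to $\inv(w)\subseteq \inv(v)=T$ and finishes the argument.

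The main obstacle is therefore the construction of $v$, i.e., recognizing $T$ as the inversion set of an actual permutation. I would invoke the standard characterization that a subset $T\subseteq \{(p,q):0\le p<q\le n\}$ is the inversion set of some $v\in A_n$ precisely when it is \emph{biclosed}: both $T$ and its complement are transitively closed. Transitive closure of $T$ itself is automatic from the definition. For the complement, I would argue as follows: suppose $(p,r)\in T$ and $p<q<r$; the goal is to show that $(p,q)\in T$ or $(q,r)\in T$. Unfolding the transitive closure, pick a chain $p=x_0<x_1<\cdots<x_k=r$ with each $(x_i,x_{i+1})\in I$. If $q$ coincides with some $x_j$, the chain splits into two subchains witnessing $(p,q)\in T$ and $(q,r)\in T$. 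Otherwise $x_i<q<x_{i+1}$ for a unique index~$i$; since $(x_i,x_{i+1})$ lies in some individual $\inv(u)\subseteq I$, and $\inv(u)$ is itself biclosed, co-transitivity of $\inv(u)$ forces either $(x_i,q)$ or $(q,x_{i+1})$ into $\inv(u)\subseteq I$. Splicing this pair into the chain yields a transitive-closure witness placing either $(p,q)$ or $(q,r)$ in $T$, as required.
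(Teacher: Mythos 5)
The paper states Proposition~\ref{join of perms} without proof; it is treated as a standard fact about the weak order on the symmetric group, for which only the reference on the lattice structure of weak order is implicitly available. Your argument supplies a complete and correct self-contained proof. The decomposition into an easy containment ($\tran(I)\subseteq\inv(w)$, since $\inv(w)$ contains $I$ and is itself transitively closed) and a harder one (realize $\tran(I)$ as the inversion set of an explicit permutation $v$, forcing $w\le v$ and hence $\inv(w)\subseteq\tran(I)$) is the canonical approach, and your verification that $\tran(I)$ is biclosed is right: transitivity of $\tran(I)$ is built in, and for co-closedness, given a chain from $p$ to $r$ in $I$ and a value $q$ strictly between, either $q$ lies on the chain (split the chain) or it falls between consecutive links $x_i<q<x_{i+1}$; then co-closedness of the single inversion set $\inv(u)$ containing $(x_i,x_{i+1})$ puts $(x_i,q)$ or $(q,x_{i+1})$ into $I$, which extends one of the subchains as needed. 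The one implicit input you should flag is the well-known characterization that inversion sets of permutations are exactly the biclosed subsets of $\{(p,q):0\le p<q\le n\}$; once that is granted, the proof is airtight. This is a clean, elementary argument that fills in a gap the paper left to the reader.
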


Building on this analogy, let us define an ``inversion set'' for a torsion class of~$\module RA_n$.
For each indecomposable module $M$ with support $[p,r]$ we associate to $M$ the pair $\inv(M) = (p-1,r)$.
We define the \newword{inversion set of a torsion class} $\calT$ to be the set of all pairs $\inv(M)$ such that $M\in \ind(\calT)$.

\begin{lemma}\label{filt = tran}
Suppose that $S$ is a collection of indecomposable modules over $RA_n$.
Then $\inv(\filt(S))$ is equal to the transitive closure of $\inv(S)$.
\end{lemma}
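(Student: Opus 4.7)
The plan is to prove the equality by double containment, using throughout the combinatorial description of indecomposables over $RA_n$ from Section~\ref{RA_n background}: each is supported on an interval, has dimension one at every vertex of its support, and uses at most one of $\{a_i, a_i^*\}$ for each $i$. Since the inclusion $\inv(S) \subseteq \inv(\filt(S))$ is immediate and $\tran$ produces the smallest transitively closed superset of its argument, the forward containment $\tran(\inv(S)) \subseteq \inv(\filt(S))$ reduces to checking that $\inv(\filt(S))$ is itself transitively closed. The reverse containment will come from analyzing the subfactors of an $S$-filtration of an indecomposable in $\filt(S)$.

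To prove transitivity, I will take a pair $(p,q), (q,r) \in \inv(\filt(S))$ realized by indecomposables $M, N \in \filt(S)$ with supports $[p+1,q]$ and $[q+1,r]$, and build an indecomposable $X$ of support $[p+1,r]$ fitting in a non-split short exact sequence $0 \to M \to X \to N \to 0$. The construction takes $X$ to agree with $M$ on $[p+1,q]$ and with $N$ on $[q+1,r]$, glued using $a_q^* \colon N(q+1) \to M(q)$ as the identity (with $a_q$ set to zero). The two-cycle relations hold trivially at the interface; the resulting $Q_X$ is connected and uses at most one of each arrow pair, so $X$ is the indecomposable associated with this subquiver. Because $a_q = 0$, the $[p+1,q]$-part of $X$ is closed under arrows and realizes $M$ as a submodule with quotient $N$, so Lemma~\ref{lem-ext-closure} places $X$ in $\filt(S)$, giving $\inv(X) = (p,r) \in \inv(\filt(S))$. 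The main obstacle is precisely this construction: one must choose between $a_q$ and $a_q^*$ to correctly realize $M$ as a submodule (not quotient) of $X$ and confirm that the resulting representation is indecomposable rather than split as $M \oplus N$.

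For the reverse containment, I will fix an indecomposable $X \in \filt(S)$ with support $[p,r]$ and pick an $S$-filtration $0 = X_0 \subsetneq \cdots \subsetneq X_\ell = X$. Each subfactor $X_i/X_{i-1}$ is indecomposable in $S$, hence has dimension vector equal to the indicator function of some interval $[a_i,b_i]$. Since these dimension vectors sum to that of $X$, itself the indicator of $[p,r]$, the intervals $\{[a_i,b_i]\}$ form a partition of $[p,r]$; after reordering them consecutively, the associated pairs produce a chain $(p-1,b_{i_1}),(b_{i_1},b_{i_2}), \ldots, (b_{i_{\ell-1}},r)$ in $\inv(S)$ whose transitive closure contains $(p-1,r) = \inv(X)$. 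This direction is essentially combinatorial bookkeeping once the multiplicity-free structure of indecomposable $RA_n$-modules is in hand.
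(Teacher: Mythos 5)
Your proof is correct, and while it shares the double-containment outline of the paper's argument, the details differ in both directions. For $\inv(\filt(S)) \subseteq \tran(\inv(S))$, the paper inducts on filtration length: it peels off the top subfactor $M/M_{l-1}\in S$, asserts that the supports of $M_{l-1}$ and $M/M_{l-1}$ split $[p,r]$ into two adjacent intervals, and applies the inductive hypothesis to $M_{l-1}$. You instead look at all $\ell$ subfactors at once: since each is an interval module and the dimension vectors sum to the indicator of $[p,r]$, the supports partition $[p,r]$, and reordering them consecutively gives a chain of inversion pairs in $\inv(S)$ whose transitive closure already contains $\inv(X)$. This is cleaner -- it avoids needing $M_{l-1}$ to be indecomposable (a point the paper glosses over; a submodule of a string module over $RA_n$ can be disconnected). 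For the other containment, the paper simply cites Lemma~\ref{lem: ext}, whereas you make the gluing explicit: given $M,N\in\filt(S)$ with adjacent supports $[p+1,q]$ and $[q+1,r]$, you splice $Q_M$ and $Q_N$ with the single arrow $a_q^*$, check the two-cycle relations are vacuous since $a_q=0$, and observe that $Q_M$ sits inside $Q_X$ as a successor-closed subquiver, so $X$ is an extension of $N$ by $M$ and hence lies in $\filt(S)$ by Lemma~\ref{lem-ext-closure}. Both routes are correct; yours is somewhat more self-contained and fills in the small gaps in the paper's exposition.
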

\begin{proof}
First, suppose that $(p-1,r)\in \inv(\filt(S))$, and let $M$ be an indecomposable module in $\filt^l(S)$ with $\supp(M) = [p, r]$.
Without loss of generality, we assume that $M$ is not simple.
Thus, $p<r$.
We prove that $(p-1,r)\in \tran(\inv(S))$ by induction on $l$.

Let $M=M_{l}\supsetneq M_{l-1} \supsetneq \dotsc \supsetneq M_0 =0$ be an $S$-filtration of $M$, and consider the short exact sequence:
\[0\to M_{l-1} \to M\to M/M_{l-1}\to 0\]
Since ${\dim(M_{l-1}(i)) + \dim(M/M_{l-1}(i)) = \dim(M(i))}$ for each
vertex $i\in Q_0$, we conclude that $\supp(M_{l-1})=[p,q-1]$ and $\supp(M/M_{l-1}) =[q,r]$, for some $q\in (p,r)$.
Because $M_{l-1}$ belongs to $\filt^{l-1}(S)$, our inductive hypothesis implies that $(p-1,q-1)\in\tran(\inv(S))$.
Also $M/M_{l-1}\in S$, so we have $(q-1,r)\in \inv(S)$.
Thus, $(p-1,r)\in  \tran(\inv(S))$.
The other containment follows immediately from Lemma~\ref{lem:
  ext}.
\end{proof}

\begin{proposition}\label{prop: inversion set}
Suppose that $\calT$ is a torsion class in $\tors RA_n$.
Then inversion set of $\calT$ is equal to $\inv(\phi(\calT))$.
\end{proposition}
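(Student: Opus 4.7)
The plan is to express both $\inv(\calT)$ and $\inv(\phi(\calT))$ as transitive closures of inversion data coming from the canonical join representation of $\calT$, and then to verify these closures coincide. Let $\calE$ be the canonical join representation of $\calT$, so $\calT = \Join\{\filt(\Gen M) : M \in \calE\}$ and $\phi(\calT) = \Join\{\delta^{-1}\sigma(M) : M \in \calE\}$ by the definition of $\phi$. Since $\calT = \filt\bigl(\bigcup_{M \in \calE} \Gen M\bigr)$, Lemma~\ref{filt = tran} yields
\[
\inv(\calT) = \tran\Bigl(\bigcup_{M \in \calE} \inv(\ind \Gen M)\Bigr),
\]
and Proposition~\ref{join of perms} yields
\[
\inv(\phi(\calT)) = \tran\Bigl(\bigcup_{M \in \calE} \inv(\delta^{-1}\sigma(M))\Bigr).
\]
It therefore suffices to show that the two unions generate the same transitive closure.

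To carry this out, I would proceed per brick. For each $M \in \calE$ with support $[p, r]$ and arc $\alpha = \sigma(M)$, I would compare two collections of pairs inside $[p-1, r]$: on one hand, the pairs $(i-1, j)$ indexed by supports $[i, j]$ of indecomposable factors of $M$, which by Proposition~\ref{cor: factors and subs} correspond to connected predecessor-closed subquivers of $Q_M$ and are thus controlled by the orientation data $L(M), R(M)$; on the other hand, the inversion set of the join-irreducible permutation $\delta^{-1}(\alpha)$, whose unique descent is the pair $(t(\alpha), b(\alpha)) = (r, p-1) = \inv(M)$ and whose remaining inversions are recorded by the positions of values in $L(\alpha) = L(M)$ and $R(\alpha) = R(M)$. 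Using induction on $|\supp M|$, peeling off a boundary vertex of $Q_M$ and invoking Lemma~\ref{lem: ext} to realize any needed gap-filling sub-support as an actual indecomposable in $\Gen M$, I would exhibit an explicit match of the transitive closures of these two local collections.

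The main obstacle is this single-brick combinatorial identity. The two collections of pairs are generally not equal as sets, so the matching must take place at the level of transitive closures, and the correspondence between predecessor-closed subquivers of $Q_M$ and the arc-shape data of $\sigma(M)$ requires careful case analysis (according to whether boundary vertices of $Q_M$ lie in $L(M)$ or $R(M)$). Once this is established for each brick, aggregation over all of $\calE$ is a formal consequence of the fact that transitive closure commutes with unions.
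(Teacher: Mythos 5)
Your high-level reduction is sound and matches the paper's: you rewrite $\inv(\calT)$ as $\tran\bigl(\bigcup_{M\in\calE}\inv(\ind\Gen M)\bigr)$ via Lemma~\ref{filt = tran}, rewrite $\inv(\phi(\calT))$ as $\tran\bigl(\bigcup_{M\in\calE}\inv(\delta^{-1}\sigma(M))\bigr)$ via Proposition~\ref{join of perms}, and observe that it suffices to treat a single brick since transitive closure commutes with unions. The gap is in the per-brick step, which you explicitly flag as ``the main obstacle'' but do not carry out, and whose intended proof rests on a false premise.

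You assert that the two collections of pairs---the inversions $\inv(N)$ over indecomposable factors $N$ of $M$, and the inversion set of the join-irreducible permutation $w=\delta^{-1}\sigma(M)$---``are generally not equal as sets, so the matching must take place at the level of transitive closures.'' In fact they \emph{are} equal as sets, and this is exactly the observation that makes the paper's proof short. If $\supp(M)=[p,q]$, a connected predecessor-closed subquiver of $Q_M$ has support $[i,j]$ precisely when $i-1\in L(M)\cup\{p-1\}$ and $j\in R(M)\cup\{q\}$, so $\inv(\ind\Gen M)$ is exactly the set of pairs $(a,c)$ with $a\in L(M)\cup\{p-1\}$, $c\in R(M)\cup\{q\}$, and $a<c$. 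On the permutation side, the join-irreducible $w$ with descent $(q,p-1)$ has inversion set exactly the pairs $(a,c)$ with $a<c$ where $a$ is either $p-1$ or one of the between-values placed after the descent and $c$ is either $q$ or a between-value placed before the descent; unwinding the definitions of $\sigma$ and $\delta$, this is the same $L/R$ criterion. Thus no induction on $|\supp M|$, no ``peeling off boundary vertices,'' and no comparison of transitive closures is needed: the sets coincide outright, and since $\inv(w)$ is already transitively closed, $\inv(\filt(\Gen M))=\tran(\inv(\ind\Gen M))=\inv(w)$. Your cited use of Lemma~\ref{lem: ext} is also misdirected for this step---that lemma produces a larger module of which $M$ is a sub- or quotient-module, not an object of $\Gen M$, so it does not manufacture the ``gap-filling'' factors you would need. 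The fix is to replace the proposed induction with the direct two-sided characterization in terms of $L(M)\cup\{p-1\}$ and $R(M)\cup\{q\}$.
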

 \begin{proof}
 First suppose that $\calT$ is join-irreducible, so that $\calT = \filt(\Gen(M))$, where $M$ is an indecomposable module over $RA_n$.
Without loss of generality, we assume that $M$ has full support.
As in Proposition~\ref{prop: arc bij}, let $\alpha= \sigma(M)$, where
$\alpha$ is defined by the condition that $L(\alpha)= L(M)$ and $R(\alpha)= R(M)$.
(Recall from Section~\ref{sec:arc-diagrams} that $L(M)$ is the set of
indices of the arrows of type $a_i^*$ in $Q_M$ while $L(\alpha)$ is
the set of nodes $i$ such that $\alpha$ passes on the left side of
$i$.)
Write $w$ for the join-irreducible permutation $\delta^{-1}(\alpha)$.
On the one hand, a pair $(p-1,r)$ is an inversion of $w$ if and only if $p-1\in L(\alpha) \cup \{0\}$ and $r\in R(\alpha)\cup \{n\}$.
On the other hand, $(p-1,r)$ is $\inv(N)$ for some factor $N$ of $M$ if and only if $p-1\in L(M) \cup \{0\}$ and $r\in R(M)\cup \{n\}$.
(Equivalently, each factor of $M$ has support $[p,r]$ such that $p-1\in L(M)\cup \{0\}$ and $r\in R(M)\cup \{n\}$.)
We conclude that $\inv(\Gen(M))$ is equal to $\inv(\phi(\calT))$.
Since the inversion set of the permutation $\phi(\calT)$ is already transitively closed, we conclude that $\inv(\calT)$ is equal to $\inv(\phi(\calT))$.

Now assume that $\calT$ is not join-irreducible.
Write $\Join \{\filt(\Gen(M)):\, M\in \calE\}$ for its canonical join representation, and write $A$ for the set $\delta^{-1}(\sigma(\calE))$.
Recall that $\phi(\calT) = \Join \{w:\, w\in A\}$, and this join is computed by taking the transitive closure of the set $\{\inv(w):\,w\in A\}$.
The join $\Join \{\filt(\Gen(M)):\,M\in \calE\}$ is computed by taking the filtration closure of the set $\{\Gen(M):\,M\in \calE\}$.
By Lemma~\ref{filt = tran}, $\inv(\calT)$ is equal to $\inv(\phi(\calT))$.
 \end{proof}

We are now prepared to prove the main result of this section.

\begin{theorem}\label{inversion and cover}
Let $\calT'$ and $\calT$ be torsion classes in $\tors RA_n$.
Suppose that $\calT'$ is equal to $\filt(\calT \cup\{M\})$, where $M$ is a minimal extending module for $\calT$.
Then $${\inv(\calT') \setminus \inv(\calT)} = \{\inv(M)\}.$$
In particular, $\tors RA_n$ is isomorphic to the weak order on $A_n$.
\end{theorem}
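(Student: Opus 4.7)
The plan is to establish the set-theoretic identity $\inv(\calT')\setminus \inv(\calT)=\{\inv(M)\}$ first, and then derive the lattice isomorphism from a counting argument based on the canonical join complex.

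Write $\supp(M)=[s,t]$, so $\inv(M)=(s-1,t)$. By Lemma~\ref{filt = tran} the set $\inv(\calT')$ equals the transitive closure $\tran(\inv(\calT)\cup\{(s-1,t)\})$, and by Proposition~\ref{prop: inversion set} the set $\inv(\calT)=\inv(\phi(\calT))$ is already transitively closed. Hence the closure can only adjoin the new pair $(s-1,t)$ together with possible bridging pairs of three forms: $(x,t)$ when $(x,s-1)\in \inv(\calT)$, $(s-1,y)$ when $(t,y)\in \inv(\calT)$, and $(x,y)$ when both hold. My goal will be to show that every such bridging pair already lies in $\inv(\calT)$. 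Once that is done, $\inv(M)\notin \inv(\calT)$ comes for free: otherwise $\inv(\calT')=\inv(\calT)$ would force $\phi(\calT')=\phi(\calT)$, contradicting that $\phi$ is a bijection with $\calT\subsetneq \calT'$.

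The bridging pairs are handled by an explicit string-module construction that uses Property~\ref{propertytwo} of the minimal extending module. Given $(x,s-1)\in\inv(\calT)$, pick an indecomposable $L\in\calT$ with $\supp(L)=[x+1,s-1]$ and define $L''$ to be the $RA_n$-module whose quiver on $[x+1,t]$ consists of $Q_L$ on $[x+1,s-1]$, $Q_M$ on $[s,t]$, and the single additional arrow $a_{s-1}\colon s-1\to s$. This is a connected subquiver respecting the gentle relations, so $L''$ is an indecomposable string module; the choice of boundary arrow makes $Q_L$ predecessor-closed and $Q_M$ successor-closed in $Q_{L''}$, giving a nonsplit short exact sequence $0\to M\to L''\to L\to 0$. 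Property~\ref{propertytwo} now forces $L''\in\calT$, so $(x,t)=\inv(L'')\in \inv(\calT)$, as desired. The case $(t,y)\in\inv(\calT)$ is symmetric, using the boundary arrow $a_t^*\colon t+1\to t$. For the three-step bridge I combine both sides: given $L,L'\in\calT$ with $\supp(L)=[x+1,s-1]$ and $\supp(L')=[t+1,y]$, let $L'''$ be the indecomposable with quiver $Q_L\sqcup Q_M\sqcup Q_{L'}$ joined by $a_{s-1}$ and $a_t^*$. Then $Q_L$ and $Q_{L'}$ are disconnected after removing $Q_M$, so $L'''/M\cong L\oplus L'\in\calT$, and Property~\ref{propertytwo} applied to $0\to M\to L'''\to L\oplus L'\to 0$ gives $L'''\in\calT$ and $(x,y)\in\inv(\calT)$.

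For the ``in particular'' claim, Proposition~\ref{prop: inversion set} translates the identity into $\inv(\phi(\calT'))=\inv(\phi(\calT))\cup\{\inv(M)\}$, so the two permutations differ by exactly one inversion; hence $\phi(\calT')\covers \phi(\calT)$ in the weak order. Thus $\phi$ is a cover-preserving bijection. By Corollary~\ref{cor: torsion labeling} and Proposition~\ref{prop: labeling}, each element of either finite lattice has as many lower covers as canonical joinands, and by Corollary~\ref{cor: RA_n cjc} these joinand counts are identified by $\phi$. The two Hasse diagrams therefore have the same total number of edges, so the cover-preserving bijection $\phi$ must be a bijection between cover relations, and hence an isomorphism of finite lattices. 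The main obstacle I anticipate is the bookkeeping for the string-module constructions: one must verify that the concatenated subquivers really define indecomposable $RA_n$-modules (gentle relations respected, connectedness giving indecomposability), that the resulting short exact sequences are genuinely nonsplit with the claimed submodule/quotient structure, and that the two-sided construction legitimately invokes Property~\ref{propertytwo} with the decomposable choice $T=L\oplus L'$, which is valid because $\calT$ is closed under direct sums.
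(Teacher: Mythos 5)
Your proof is correct and follows essentially the same route as the paper: the one-sided string-module extensions you build by hand are exactly what the paper obtains from Lemma~\ref{lem: ext}, Property~\ref{propertytwo} places them in $\calT$, and the cover-counting finish via $\Gamma(\tors RA_n)$ and Proposition~\ref{prop: labeling} is identical. The only (harmless) difference is that your two-sided construction $L'''$ for the pair $(x,y)$ is redundant, since once $(x,t)$ and $(s-1,y)$ are shown to lie in the already transitively closed set $\inv(\calT)$, the pair $(x,y)$ follows automatically.
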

\begin{proof}
Write $(q,r)$ for the inversion $\inv(M)$.
Proposition~\ref{filt = tran} says that $\inv(\calT') = \tran(\inv(\calT) \cup \{(q,r)\})$.
If $(q,r)\in \inv(\calT)$ then $\inv(\calT)$ is equal to $\inv(\calT')$, and this contradicts the fact that $\phi$ is a bijection (because distinct permutations have distinct inversion sets).
We claim that the set $\inv(\calT\cup \{(q,r)\})$ is transitively closed.

To prove this claim, first suppose that $p<q$ and $(p,q)\in \inv(\calT)$.
We need to show that $(q,r)\in \inv(\calT)$.
Let $M'$ be an indecomposable module in $\calT$ with $\supp(M)= [p+1,q]$.
Then, Lemma~\ref{lem: ext} implies that there exists a short exact sequence:
\[0\to M \to M'' \to M'\to 0,\]
where $\supp(M'')= [p+1,r]$.
Since $M$ is minimally extending, \ref{propertytwo} implies that $M''\in \calT'$.
The statement follows.
A similar argument shows that if there is some $s>r$ such that $(r,s)\in \inv(\calT)$ then $(q,s)$ also belongs to $\inv(\calT)$.
We conclude that $\inv(\calT') = \inv(\calT)\cup \{(q,r)\}$, as desired.
In particular, $\phi(\calT')\covers \phi(\calT)$.

For any finite lattice, define $\covdown(L)$ to be the set of pairs $(w',w)$ such that $w'\covers w$.
As is the case for $\tors RA_n$ and $A_n$, suppose that each element $w'\in L$ has a canonical join represenation.
Recall that the number of canonical joinands of $w'$ is equal to the number of elements covered by $w'$.
(This is Proposition~\ref{prop: labeling}.)
Thus, the number of pairs $(w',w)$ in $\covdown(L)$ is equal to a weighted sum of the faces in $\Gamma(L)$, where each face is weighted by its size.
In particular, the sets $\covdown(\tors RA_n)$ and $\covdown(A_n)$ are equinumerous.
Since $\phi$ is a bijection, we conclude that the mapping $(\calT', \calT) \mapsto (\phi(\calT),\phi(\calT'))$ is a bijection from $\covdown(\tors RA_n)$ to $\covdown(A_n)$.
Hence, the lattice $\tors RA_n$ is isomorphic to the weak order on $A_n$.
\end{proof}

\section{Acknowledgements}
The authors thank David Speyer and Gordana Todorov, who helped start this project.
We also thank Al Garver and Thomas McConville, who led us to the correct statement of Theorem~\ref{main: cjr}.

\label{bib}

\end{document}